\documentclass[11pt]{article}
\usepackage[applemac]{inputenc}
\usepackage[english]{babel}
\usepackage{amssymb}
\usepackage{amsthm}
\usepackage{stmaryrd}
\usepackage{amsmath}
\newtheorem{de}{Definition}[section]
\newtheorem{prop}{Proposition}[section]
\newtheorem{theo}{Theorem}[section]

\newtheorem{coro}{Corollary}[section]
\newtheorem{rem}{Remark}[section]

\DeclareMathOperator{\Hom}{Hom}

\DeclareMathOperator{\Rep}{Rep}

\begin{document}
\title{On the interior motive of certain Shimura varieties : the case of Picard varieties}
\author{Guillaume Cloître}
\date{}
\maketitle

\begin{abstract}

The aim of this article is the construction of the interior motive of a Picard variety. Those are Shimura varieties of PEL type. Our result is an application of the strategy developed by Wildeshaus to construct a Hecke-invariant motive whose realizations correspond to interior cohomology.

\end{abstract}

\tableofcontents

\section{Introduction}

Picard varieties are Shimura varieties of PEL type. Their construction involves a CM field $E$. They admits an interpretation as moduli spaces of some abelian varieties of dimension $3g$ with additional data, $g$ being the degree of the maximal sub-field totally real of $E$. 

A Picard variety $S$ admits a Baily-Borel compactification $S^*$ obtained by adding a finite number of cusps. 

In \cite{Wil08}, a strategy is developed to construct the so-called interior motive of some variety $X$. The interior motive is a motivic counter-part of the interior cohomology. It is also some kind of an minimal compactification for the motive of $X$.

The strategy we follow relies on the notion of weights applied to the so-called boundary motive $\partial M_{gm}(X)$. By definition, the later is part of an exact triangle
$$\partial M_{gm}(X)\rightarrow M_{gm}(X)\rightarrow M_{gm}^c(X)\rightarrow \partial M_{gm}(X)[1].$$
The morphism $M_{gm}(X)\rightarrow M_{gm}^c(X)$ we are interested in is in some sense controlled by the boundary motive.

The triangulated category of motives admits a weight structure which permits to identify motives of proper varieties (as are compactifications) and their direct factors with objects of weight zero. 

The notion of weights lacks functoriality in some sense. Nevertheless, the avoidance of certain weights solve this problem. As a consequence, a condition of avoidance of certain weights by $\partial M_{gm}(X)$ permits the construction of the interior motive.

The motives we are studying are of specific type, namely their are abelian. It allows to reduce the verification of avoidance of weights to the analog property on the realizations.

Thus, the main goal of this article is to calculate the weights of some degeneration of Hodge structure on a Picard variety $S$ in order to construct the interior motive of some direct factor of the motive associated to Kuga-Sato $A^r$ families over $S$. As in previous cases (e.g. \cite{Wil09} and \cite{Wil14}), we found that a condition of regularity of a character defining our direct factor allows for the construction. We study also the reverse statement, namely equivalence between avoidance of weights and regularity condition.

Eventually, we use those results to associate a motive to some automorphic forms of Picard varieties. \newline

The first part studies Picard varieties. The second part contains the main result, namely the calculation of weights appearing in some degenrations of Hodge structures and the construction of the interior motive of some direct factors of the Kuga-Sato family $A^r$. The third part deals with motives associated to automorphic forms.\newline

\textbf{Notations: }Let $k$ be a perfect field. We denote by $Sch/k$ the category of separated schemes of finite type over $k$ and $Sm/k$ the full sub-category of smooth schemes over $k$.

$DM_{gm}(k)$ will be the triangulated category of geometric motives over $k$ (defined for example in \cite{LBF}). When $Q$ is a commutative $\mathbb{Q}$-algebra, $DM_{gm}(k)_Q$ will be the version with $Q$-coefficients.

$CHM(k)$ is the category of Chow motives defined in \cite{LBF}. It is the opposite category of the one defined in \cite{And}. $CHM(k)_Q$ is the $Q$-coefficient version, for $Q$ a commutative $\mathbb{Q}$-algebra.

For $S\in Sm/k$, we denote by $CHM^s(S)$ the category of relative Chow motives which is the opposite of the one defined in section 1.6. of \cite{DM}. There is also a $Q$-coefficient version $CHM^s(S)_Q$ for $Q$ a commutative $\mathbb{Q}$-algebra.

Finally, for $x$ a real number, $[x]$ denote its floor.

\section{Picard varieties}

Let $E$ be a CM field, $F$ its maximal totally real subfield, $\mathcal{O}_E$ the integral ring of $E$ and $\alpha$ the element of order two in $Aut(E/F)$. We denote by $g=[F:\mathbb{Q}]$ the degree of $F$ over $\mathbb{Q}$. We choose $\Phi=\{\sigma_i\}_{1\leq i\leq g}$ a type of $E$, which means that the set of complex embeddings of $E$ in $\mathbb{C}$ is $\Phi\cup\overline{\Phi}$.

Let $V$ be a three dimensional vector space over $E$, $J$ an hermitian form on $V$ such that for every complex embedding $\sigma:E\rightarrow\mathbb{C}$, the hermitian form $J_\sigma:V_\sigma\times V_\sigma\rightarrow\mathbb{C}$ has signature $(2,1)$ where $V_\sigma=V\otimes_{E,\sigma}\mathbb{C}$.

On consider also an $\mathcal{O}_E$-lattice $L$ such that $J(L,L)\subset\mathcal{O}_E$.

\begin{prop}
There exists a basis $\mathfrak{B}$ of $V$ in which $J$ has the form
$$J_b=\left(\begin{array}{ccc} 0 & 0 & 1 \\ 0 & b & 0 \\ 1 & 0 & 0\end{array}\right)$$ 
for a certain $b\in F$.
\end{prop}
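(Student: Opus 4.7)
The plan is to construct the basis $\mathfrak{B}=(v_1,v_2,v_3)$ in three stages, following the standard hyperbolic decomposition of a non-degenerate hermitian form of odd rank.

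First, I would establish the existence of a nonzero isotropic vector $v_1\in V$ for $J$. At every archimedean place $\sigma$ of $F$, the form $J_\sigma$ has signature $(2,1)$, hence is indefinite and thus isotropic. At every non-archimedean place $v$ of $F$, a hermitian form of rank at least $3$ over the quadratic \'etale extension $E_v/F_v$ is automatically isotropic, by the classification of hermitian forms over local fields. The Hasse principle for isotropy of hermitian forms over the CM extension $E/F$ (Landherr's theorem) then yields a global nonzero $v_1\in V$ with $J(v_1,v_1)=0$.

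Next, I would extend $v_1$ to a hyperbolic pair. Non-degeneracy of $J$ provides $w\in V$ with $J(v_1,w)\neq 0$; after rescaling, one may assume $J(v_1,w)=1$. For any $\lambda\in E$, the vector $v_3:=w+\lambda v_1$ still satisfies $J(v_1,v_3)=1$, and a direct computation using the sesquilinearity of $J$ gives
$$J(v_3,v_3)=J(w,w)+\mathrm{Tr}_{E/F}(\lambda).$$
Since $J(w,w)\in F$ and the trace $E\to F$ is surjective, $\lambda$ can be chosen so that $J(v_3,v_3)=0$.

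Finally, the restriction of $J$ to $\mathrm{span}_E(v_1,v_3)$ is non-degenerate, so its orthogonal complement in $V$ is a one-dimensional $E$-subspace. Taking any generator $v_2$ of it, one has $b:=J(v_2,v_2)\in F$ because $J(x,x)$ is fixed by $\alpha$ for all $x\in V$. In the basis $(v_1,v_2,v_3)$ the Gram matrix of $J$ is then exactly $J_b$.

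The main obstacle is the first step: producing an isotropic vector requires the Hasse principle for hermitian forms over a CM field, together with the verification of local isotropy at each completion (automatic at non-archimedean places for rank $\geq 3$, and immediate from signature $(2,1)$ at the archimedean ones). The remaining constructions reduce to elementary linear algebra over $E$ combined with surjectivity of the trace $\mathrm{Tr}_{E/F}$.
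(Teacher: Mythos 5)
Your proof is correct and follows the same route as the paper: the paper simply cites a reference for the existence of an isotropic vector (which is exactly the local--global argument you spell out via Landherr's theorem) and dismisses the rest as "basic linear algebra" (which is exactly your construction of the hyperbolic pair via $v_3=w+\lambda v_1$ and surjectivity of $\mathrm{Tr}_{E/F}$, followed by taking the orthogonal complement). You have merely supplied the details the paper leaves implicit.
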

\begin{proof}
From \cite{O}, we know that there exists an isotropic vector. The rest follows from basic linear algebra.
\end{proof}

\begin{de}
Let $\tilde{G}:=Res_{F/\mathbb{Q}}GU(V;J)_F$, that is to say that for every $\mathbb{Q}$-algebra $R$, we have 
$$\tilde{G}(R)=\{g\in GL_{E\otimes_\mathbb{Q} R}(V\otimes_\mathbb{Q}R)/\exists \lambda(g)\in (E\otimes_\mathbb{Q} R)^\times ,J(g.,g.)=\lambda(g)J(.,.)\}.$$

We denote $\lambda:\tilde{G}\rightarrow Res_{F/\mathbb{Q}}\mathbb{G}_{m,F}$ the morphism associated to the multiplier.

Let $G:=\tilde{G}\times_{Res_{F/\mathbb{Q}}\mathbb{G}_{m,F}}\mathbb{G}_{m,\mathbb{Q}}$, the fiber product being associated to the multiplier and the canonical morphism $\mathbb{G}_{m,\mathbb{Q}}\rightarrow Res_{F/\mathbb{Q}}\mathbb{G}_{m,F}$.
\end{de}

The isomorphism $E\otimes_\mathbb{Q}\mathbb{C}\simeq(\mathbb{C}\times\mathbb{C})^{g}$ induces an isomorphism 
$$V\otimes_{\mathbb{Q}}\mathbb{C}\simeq \prod\limits_{i=1}^g(V_{i,+}\times V_{i,-}).$$ 
$V_{i,+}$ (resp. $V_{i,-}$) is the subspace where $x\in E$ acts as $\sigma_i(x)$ (resp. $\overline{\sigma_i}(x)$). It induces in turn an isomorphisme $\tilde{G}_\mathbb{C}\simeq GL_{3,\mathbb{C}}^g\times\mathbb{G}_{m,\mathbb{C}}^g.$, hence an isomorphisme
$$G_\mathbb{C}\simeq GL_{3,\mathbb{C}}^g\times\mathbb{G}_{m,\mathbb{C}}.$$

Complex conjugation acts on $\tilde{G}(\mathbb{C})$ as $(g_i,\lambda_i)_{1\leq i\leq g}\mapsto(\overline{\lambda_i}J^{-1}{}^tg_i^{-1}J,\overline{\lambda_i})_{1\leq i\leq g}$. Thus, we have
$$G(\mathbb{R})=\{((g_i)_{1\leq i\leq g},\lambda)\in GL_{3,\mathbb{C}}^g\times \mathbb{R}^\times/\forall 1\leq i\leq g, \lambda g_i^*=g_i\}.$$

The center $Z(G)$ of $G$ is
$$Z(G)=Res_{E/\mathbb{Q}}\mathbb{G}_{m,E}\times_{Res_{F/\mathbb{Q}}\mathbb{G}_{m,F}}\mathbb{G}_{m,\mathbb{Q}}.$$

We define a morphism $h:\mathbb{S}\mapsto G_\mathbb{R}$ given on $\mathbb{C}$ by
$$(z_1,z_2)\mapsto ((\left(\begin{array}{ccc} \frac{z_1+z_2}{2} & 0 & \frac{z_1-z_2}{2} \\ 0 & z_1 & 0 \\ \frac{z_1-z_2}{2} & 0 & \frac{z_1+z_2}{2}\end{array}\right))_{1\leq i\leq g},z_1z_2).$$

This morphism and the set $X$ of its $G(\mathbb{R})$-conjugates determine a Shimura datum $(G,X)$. We choose $K$ an open compact sub-group of $G(\mathbb{A}_f)$ which stabilises $L$. We associates to it the Picard variety $S(G,K)(\mathbb{C})=G(\mathbb{Q})\backslash X\times G(\mathbb{A}_f)/K$, denoted $S$ when the context is clear. It admits an interpretation as a moduli space of abelian varieties of dimension $3g$ which additional data (see \cite{Gor} and \cite{Cl} for a precise statement). We will consider such groups which are also neat. In this case, the Picard variety is smooth. We denote by $A\rightarrow S$ the universal abelian variety. $A^r$ will be the $r$-fold fiber product of $A$ over $S$. It is the Shimura variety associated to the Shimura datum given by the unipotent extension of $G$ by $V^r$ (see \cite{Pin}).

\begin{rem}
In \cite{Gor}, the Shimura data is $(\tilde{G},X)$ but it does not fit the condition (+) of \cite{BW}.
\end{rem}

A Picard variety admits a model over its reflex field $E(G,X)$ which is the field of definition of the $G(\mathbb{C})$-conjugation class of the morphism $\mu:\mathbb{G}_{m,\mathbb{C}}\rightarrow G_\mathbb{C}$ given by
$$z\mapsto ((\left(\begin{array}{ccc} \frac{z+1}{2} & 0 & \frac{z-1}{2} \\ 0 & z & 0 \\ \frac{z-1}{2} & 0 & \frac{z+1}{2}\end{array}\right))_{1\leq i\leq g},z).$$

Let $\mathbb{Q}^{al}$ be an algebraic closure of $\mathbb{Q}$ containing $E$. We recall that the reflex field of the type $(E,\Phi)$ is the subfield of $\mathbb{Q}^{al}$ fixed by the elements $\tau\in Gal(\mathbb{Q}^{al}/\mathbb{Q})$ such that $\tau(\Phi)=\Phi$ (see \cite{Mil}). 

\begin{prop}
The reflex field of $(G,X)$ is the reflex field of $(E,\Phi)$.
\end{prop}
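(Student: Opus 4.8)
The reflex field $E(G,X)$ is, by definition, the field of definition of the $G(\mathbb{C})$-conjugacy class of the cocharacter $\mu$ displayed above. The strategy is to compute, over $\mathbb{C}$ via the decomposition $G_\mathbb{C}\simeq GL_{3,\mathbb{C}}^g\times\mathbb{G}_{m,\mathbb{C}}$ indexed by $\Phi=\{\sigma_1,\dots,\sigma_g\}$, how the absolute Galois group $\mathrm{Gal}(\mathbb{Q}^{al}/\mathbb{Q})$ acts on the conjugacy class, and then identify the stabilizer with the stabilizer of $\Phi$. First I would record that the $i$-th component of $\mu$ is conjugate in $GL_{3,\mathbb{C}}$ to the standard cocharacter $z\mapsto \mathrm{diag}(z,z,1)$ (diagonalize the given symmetric-looking matrix: its eigenvalues are $z,z,1$), while the $\mathbb{G}_m$-factor component is $z\mapsto z$. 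Hence the conjugacy class of $\mu$ is determined by the collection of cocharacter-conjugacy-classes on each $GL_3$ factor, i.e.\ by an unordered assignment to each embedding in $\Phi$ of the partition data $(z,z,1)$, together with the fixed class $z$ on the central $\mathbb{G}_m$.

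Next I would make the Galois action explicit. The set of $GL_3$-factors of $G_\mathbb{C}$ is canonically indexed by $\Phi$ (the factor $V_{i,+}$ is where $E$ acts through $\sigma_i$), and a priori one might think all of $\mathrm{Hom}(E,\mathbb{C})=\Phi\cup\overline\Phi$ is relevant; but the crucial point is that the cocharacter $\mu$ is \emph{nontrivial} (namely $(z,z,1)$, not $(1,1,1)$) exactly on the factors indexed by $\Phi$ and trivial on those indexed by $\overline\Phi$ — this is visible from the fact that $h$ was defined using the type $\Phi$. Therefore a Galois element $\tau\in\mathrm{Gal}(\mathbb{Q}^{al}/\mathbb{Q})$ fixes the $G(\mathbb{C})$-conjugacy class of $\mu$ if and only if it permutes the set of ``nontrivial'' factors among themselves, which happens if and only if $\tau(\Phi)=\Phi$ as a subset of $\mathrm{Hom}(E,\mathbb{C})$. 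That is precisely the defining condition for the reflex field of the CM type $(E,\Phi)$, so the two fixed fields coincide.

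To carry this out cleanly I would work with a finite Galois extension of $\mathbb{Q}$ in $\mathbb{Q}^{al}$ splitting $E$ (so that all embeddings $\sigma_i$ land in it and the product decomposition of $G$ is defined there), and phrase the conjugacy-class datum as a function on $\mathrm{Hom}(E,\mathbb{C})$, as in Milne's treatment of reflex fields of Shimura data; the Galois action on $G_\mathbb{C}$ permutes the $GL_3$-factors through its action on $\mathrm{Hom}(E,\mathbb{C})$ (with $\tau$ acting on the factor indexed by $\sigma$ by sending it to the factor indexed by $\tau\circ\sigma$). One then checks the multiplier/$\mathbb{G}_m$-component contributes nothing to the stabilizer since $\mathbb{G}_{m,\mathbb{Q}}$ is already defined over $\mathbb{Q}$ and the relevant cocharacter there is the identity.

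\textbf{Main obstacle.} The routine linear algebra (diagonalizing the matrix, matching conjugacy classes) is harmless; the one genuinely delicate point is the bookkeeping of \emph{which} embeddings carry the nontrivial cocharacter and verifying that the Galois action on the index set $\mathrm{Hom}(E,\mathbb{C})$ is compatible with its action on the factors of $G_\mathbb{C}$ — i.e.\ making sure that $\tau$ stabilizing the $\mu$-class forces $\tau(\Phi)=\Phi$ and not merely $\tau(\Phi\cup\overline\Phi)=\Phi\cup\overline\Phi$ (which is automatic). This is exactly where the asymmetry between $\sigma_i$ (where $\mu$ is $(z,z,1)$) and $\overline{\sigma_i}$ (where $\mu$ is trivial) is used, and it must be stated carefully rather than asserted.
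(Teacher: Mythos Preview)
Your overall plan coincides with the paper's: work over a Galois closure of $E$, analyse the Galois action on the conjugacy class of $\mu$ through the product decomposition $G_{\mathbb{C}}\simeq GL_{3,\mathbb{C}}^{\,g}\times\mathbb{G}_{m,\mathbb{C}}$, and show that the stabilizer of this class is exactly $\{\tau:\tau(\Phi)=\Phi\}$. The diagonalisation of each $GL_3$-component of $\mu$ to $z\mapsto\mathrm{diag}(z,z,1)$ is correct, and the forward containment $E(G,X)\subset E(E,\Phi)$ follows, as in the paper, from the fact that the $g$ components of $\mu$ are identical.

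There is, however, a concrete error at precisely the step you yourself flag as delicate. You assert that $\mu$ is ``trivial on the factors indexed by $\overline{\Phi}$''. This is wrong on two counts. First, $G_{\mathbb{C}}$ has only $g$ copies of $GL_3$, indexed by $\Phi$ (equivalently by the places of $F$); there are no independent $GL_3$-factors labelled by $\overline{\Phi}$. Second, if one instead reads your claim as a statement about the action of $\mu(z)$ on the pieces $V_{i,-}$ of the tautological representation, it is still false: since an element $(g,\lambda)\in G$ acts on $V_{i,-}$ via $\lambda J^{-1}\,{}^{t}g^{-1}J$, the eigenvalue set $\{z,z,1\}$ of $\mu_i(z)$ on $V_{i,+}$ is transformed into $\{z\cdot z^{-1},\,z\cdot z^{-1},\,z\cdot 1\}=\{1,1,z\}$ on $V_{i,-}$. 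The multiplier $\lambda(\mu(z))=z$ is exactly what prevents triviality. So the asymmetry is $(z,z,1)$ versus $(z,1,1)$, not $(z,z,1)$ versus $(1,1,1)$.

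The repair is immediate and the conclusion survives: the two cocharacter classes $(z,z,1)$ and $(z,1,1)$ in $GL_3$ are inequivalent (the eigenvalue $z$ occurs with different multiplicities). What has to be checked for the reverse containment is that a Galois element $\tau$ sending some $\sigma_i$ into $\overline{\Phi}$ acts on the corresponding $GL_3$-factor through the outer automorphism $g\mapsto\lambda J^{-1}\,{}^{t}g^{-1}J$, hence converts the class $(z,z,1)$ into $(z,1,1)$ and therefore cannot fix the $G(\mathbb{C})$-conjugacy class of $\mu$. This is exactly the paper's argument, phrased there in terms of the dimensions of the weight spaces of $V_{i,+}$ and $V_{i,-}$ under $\mu$ (dimension $1$ versus $2$ for one weight, and $2$ versus $1$ for the other).
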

\begin{proof}

Let $E_{gal}$ be a Galois closure of $E$ in $\mathbb{Q}^{al}$. $G$ splits over $E_{gal}$, thus $\mu$ is defined over it.

Let $\tau\in\text{Gal}(E_{gal}/\mathbb{Q})$. It acts on $G_{E_{gal}}$. We write $\tau^{-1}(i)$ the integer such that $\sigma_{\tau^{-1}(i)}\in\{\tau^{-1}\sigma_i,\tau^{-1}\overline{\sigma_i}\}$. We denote by $\tilde{g}_{\tau^{-1}(i)}$ the element which is $g_{\tau^{-1}(i)}$ (resp. $\lambda J^{-1}{}^tg_{\tau^{-1}(i)}^{-1}J$) if $\tau^{-1}\sigma_i=\sigma_{\tau^{-1}(i)}$ (resp. si $\tau^{-1}\sigma_i=\overline{\sigma_{\tau^{-1}(i)}}$).

Then the action of $\tau$ on $(g_i,\lambda)\in G_{E_{gal}}$ is given by
$$\tau.((g_i)_{1\leq i\leq g},\lambda)=((\tau.\tilde{g}_{\tau^{-1}(i)})_{1\leq i\leq g},\tau.\lambda).$$

As the $g$ projections $G_{E_{gal}}\rightarrow GL_{3,E_{gal}}$ composed with $\mu$ are identical, it shows that $\mu$ is defined over the reflex field of $(E,\Phi)$.

What is more, $V_{i,+}$ (resp. $V_{i,-}$) decomposes under the action of $\mathbb{G}_{m}$ via $\mu$ (or one of its conjugate) in a space of weight $0$ of dimension 1 (resp. 2) and a space of weight $-1$ of dimension 2 (resp. 1). An automorphism $\tau\in Gal(E_{gal}/\mathbb{Q})$ exchanging $\sigma_i$ and $\overline{\sigma_i}$ exchanges also $V_{i,+}$ and $V_{i,-}$. Therefore, if a conjugate of $\mu$ were defined over a field fixed by such an automorphism, the space of weight 0 of $V_{i,+}$ would be exchange with its subspace of weight $-1$ (and the same would be true for $V_{i,-}$).

Therefore, any conjugate of $\mu$ can not be defined over such a field. Thus, the field of definition of the conjugate class of $\mu$ is the reflex field of $(E,\Phi)$.

\end{proof}

We know from \cite{Pin} that the Picard variety $S$ possesses a canonical model over its reflex field. The same is true for the universal abelian variety $A$ over $S$. For the determination of the reflex field of $(E,\Phi)$, we refer to \cite{Dod}.\newline

We are now going to determine compactifications of $S$ following \cite{Pin}. First we need to know Borel subgroups and maximal tori.

\begin{prop}
Every parabolic subgroup $Q$ of $G$ defined over $\mathbb{Q}$ is conjugated in $G$ to a group of the form
$$(\left(\begin{array}{ccc} * & * & * \\ 0 & * & * \\ 0 & 0 & * \end{array} \right), *)\cap G.$$
Such a group is automatically a Borel sub-group.

The unipotent radical of $Q$ has dimension $3g$.

The maximal tori defined over $\mathbb{C}$ have dimension $3g+1$ and are conjugated to 
$$T_m=(\left(\begin{array}{ccc} * & 0 & 0 \\ 0 & * & 0 \\ 0 & 0 & * \end{array} \right)_{1\leq i\leq g}, *)$$

The maximal split tori defined over $\mathbb{Q}$ have dimension $2$ and are conjugated to 
$$T=\{(\left(\begin{array}{ccc} t & 0 & 0 \\ 0 & r & 0 \\ 0 & 0 & r^2t^{-1}\end{array}\right)_{1\leq i\leq g},r^2)\}.$$
\end{prop}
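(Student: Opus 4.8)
The plan is to analyze the group $G$ structurally by passing to a convenient base change where the unitary group becomes a general linear group, then descend the information about parabolics, maximal tori, and maximal split tori back to $\mathbb{Q}$. First I would recall from the earlier discussion that over $E_{gal}$ (or already over $\mathbb{C}$), we have $G_\mathbb{C}\simeq GL_{3,\mathbb{C}}^g\times\mathbb{G}_{m,\mathbb{C}}$, so that the parabolic subgroups, Borel subgroups, maximal tori, and their dimensions can be read off from the classical flag variety of $GL_3$. In particular, the parabolic subgroups of $GL_3$ are (up to conjugacy) the stabilizers of flags, the Borel is the full upper-triangular group of dimension $6$, and the maximal torus is the diagonal one of dimension $3$; the unipotent radical of the Borel has dimension $3$. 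Multiplying by the extra $\mathbb{G}_m$ factor and taking the product over the $g$ real places then gives the stated dimensions: maximal tori of dimension $3g+1$ and unipotent radical of a Borel of dimension $3g$.

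The substantive point is the descent to $\mathbb{Q}$, governed by the hermitian form $J$. Using Proposition 2.1, I would fix the basis $\mathfrak{B}$ in which $J=J_b$, which exhibits a full isotropic flag $\langle e_1\rangle\subset\langle e_1,e_2\rangle$ defined over $E$ (indeed over $F$), where $e_1$ is the first basis vector; the line $\langle e_1\rangle$ is isotropic and $\langle e_1,e_2\rangle$ is its orthogonal complement for the $(2,1)$-form. The stabilizer in $GU(V;J)$ of this flag is, after intersecting with $G$ and restricting scalars, precisely the group $(\left(\begin{array}{ccc} * & * & * \\ 0 & * & * \\ 0 & 0 & *\end{array}\right),*)\cap G$ displayed in the statement. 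Since the Witt index of $J_\sigma$ is $1$ for every archimedean place (signature $(2,1)$), there are no proper parabolics strictly between this one and $G$ over $\mathbb{Q}$, so this parabolic is minimal, i.e. a Borel subgroup, and by the conjugacy of minimal parabolics over $\mathbb{Q}$ every parabolic defined over $\mathbb{Q}$ is conjugate to it. This also identifies the split rank: the maximal $\mathbb{Q}$-split torus sits inside this Borel, and scaling $e_1$ by $t$ forces scaling $e_3$ by $\lambda t^{-1}$ (to preserve the pairing $\langle e_1,e_3\rangle$) while $e_2$ scales by $r$ with $r^2=\lambda$ (from the $b$-block, using that $b\in F$ so $r$ must be totally real hence rational on the split part), giving the displayed torus $T$ of dimension $2$; unipotent radical dimension $3g$ follows by counting the strictly-upper-triangular entries over $E$ (three $E$-entries, i.e. $3g$ real dimensions, once one accounts for the constraint imposed by $\lambda$ and the hermitian condition).

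The main obstacle I expect is the bookkeeping in the descent step: carefully verifying that the intersection with $G$ of the naive upper-triangular group is connected and really is a Borel subgroup over $\mathbb{Q}$ — in particular that $G$ has $\mathbb{Q}$-rank $2$ and not more — and pinning down the exact form of the maximal split torus $T$, including why the middle entry is forced to be $r$ with $r^2$ equal to the multiplier and why no further splitting occurs at the other archimedean places. This amounts to checking that the relevant root system is of type $BC_1$ (or $A_1$) at each place and that the only $\mathbb{Q}$-rational split directions are the ones coming from the isotropic line together with the similitude factor. The computation of dimensions of maximal tori over $\mathbb{C}$ and of the unipotent radical is then a routine consequence of the explicit description $G_\mathbb{C}\simeq GL_{3,\mathbb{C}}^g\times\mathbb{G}_{m,\mathbb{C}}$ and the block structure of $J_b$, so I would state it and move on.
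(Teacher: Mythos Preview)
Your approach is essentially the same as the paper's: both exploit the isotropic line furnished by the basis $\mathfrak{B}$ (Proposition~2.1) to exhibit the upper-triangular Borel, and both read off the dimensions of the unipotent radical and maximal tori from the explicit description $G_\mathbb{C}\simeq GL_{3,\mathbb{C}}^g\times\mathbb{G}_{m,\mathbb{C}}$. The only difference in emphasis is that the paper argues the conjugacy step a bit more directly: viewing $G\subset \mathrm{Res}_{E/\mathbb{Q}}GL_{3,E}$, any $\mathbb{Q}$-parabolic stabilizes some subspace $W\subset V$, hence also $W^\bot$, and since it contains a Borel a dimension count forces $W\cap W^\bot\neq 0$, so the parabolic stabilizes an isotropic line; transitivity of $G$ on isotropic lines then gives the conjugacy in one stroke, without invoking the general Borel--Tits machinery you allude to. One small wording issue in your write-up: ``no proper parabolics strictly between this one and $G$'' says the Borel is \emph{maximal}, not minimal; what you really use is that the relative semisimple $\mathbb{Q}$-rank is $1$ (the second split dimension in your $T$ is central), so minimal and maximal proper parabolics coincide --- which is exactly the content of the paper's isotropic-line argument.
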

\begin{proof}

The first subgroup described is a Borel subgroup. Its unipotent radical is 
$$(\left(\begin{array}{ccc} 1 & * & * \\ 0 & 1 & * \\ 0 & 0 & 1 \end{array} \right), 1)\cap G.$$
Considering a $\mathbb{Q}$-basis of $E$ and writing explicitly the equations defining $G$, we see that its dimension is $3g$.

Seeing $G$ as a subgroup of $Res_{E/\mathbb{Q}}GL_{3,E}$, we know that a $\mathbb{Q}$-parabolic must stabilize a subspace $W$ of $V$. Then it stabilises also $W^\bot$, the $J$-orthogonal of $W$. As a parabolic contains a Borel, $W$ and $W^\bot$ can not have zero intersection for dimension reason. Thus, $Q$ stabilises an isotropic line. As the action of $G$ on isotropic lines is transitive, we get the result for parabolic subgroups.

The assertion for complex tori is clear. Inside such a torus, we see that the given torus is a maximal rational torus.

\end{proof}

We recall some conventions used in \cite{Pin}. We consider $H_0=GL_{2,\mathbb{R}}\times_{\mathbb{G}_{m,\mathbb{R}}}\mathbb{S}$, the morphisms defying the fiber product being the determinant and the norm.

We use the morphism $h_0: \mathbb{S}\rightarrow H_0$ et $h_\infty : \mathbb{S}_\mathbb{C}\rightarrow H_{0,\mathbb{C}}$ defined by :
$$h_0(z_1,z_2)=(\left(\begin{array}{cc} \frac{z_1+z_2}{2} & \frac{z_1-z_2}{2i} \\ \frac{z_2-z_1}{2i} & \frac{z_1+z_2}{2}\end{array}\right),(z_1,z_2))$$
$$h_\infty(z_1,z_2)=(\left(\begin{array}{cc} z_1z_2 & i(1-z_1z_2) \\ 0 & 1\end{array}\right),(z_1,z_2)).$$

Finally, we denote by $w:\mathbb{G}_{m,\mathbb{R}}\rightarrow\mathbb{S}$ the canonical morphism.

\begin{prop}
Let $(Q,T)$ be as in the previous proposition. Then the morphism $\lambda : \mathbb{G}_{m\mathbb{Q}}\rightarrow T$ defined by
$$t\mapsto (\left(\begin{array}{ccc} t & 0 & 0 \\ 0 & 1 & 0 \\ 0 & 0 & t^{-1} \end{array} \right)_{1\leq i\leq g},1)$$ 
is the character associated to (Q,T) in \cite{Pin}.
\end{prop}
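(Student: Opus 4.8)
The statement to prove is that the cocharacter
$$\lambda : \mathbb{G}_{m,\mathbb{Q}} \to T, \qquad t \mapsto \left(\left(\begin{array}{ccc} t & 0 & 0 \\ 0 & 1 & 0 \\ 0 & 0 & t^{-1}\end{array}\right)_{1\leq i\leq g}, 1\right),$$
is the one attached to the admissible parabolic $(Q,T)$ in Pink's formalism. I would first recall precisely what ``the character associated to $(Q,T)$ in \cite{Pin}'' means: it is the cocharacter $\lambda$ of the maximal split torus $T$ whose associated one-parameter subgroup describes the action of the center of the Levi on the unipotent radical, equivalently the cocharacter appearing in the rational boundary component attached to $Q$ in Pink's construction of toroidal/Baily-Borel compactifications. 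Concretely one needs: (i) $\lambda$ factors through the connected center of the Levi quotient of $Q$ (here the Levi is the diagonal torus $T_m$, so this is automatic once we check $\lambda$ lands in $T$, which it does by inspection); (ii) $\lambda$ is trivial on the subgroup $\mathbb{G}_m$ coming from the multiplier/weight cocharacter $w$ composed with $h$; and (iii) the weights of $\lambda$ acting by conjugation on $\mathrm{Lie}(W_Q)$, where $W_Q$ is the unipotent radical, are all strictly positive, so that $\lambda$ is ``oriented'' correctly and generates the correct ray.

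The key computation is step (iii): decompose $\mathrm{Lie}(W_Q) = \mathfrak{n}$ under the adjoint action of $T$, using the explicit matrix description of $W_Q$ from Proposition 2.4. Writing a general element of the unipotent radical as an upper-triangular unipotent with entries $(u_{12}, u_{13}, u_{23})$ in each of the $g$ factors (subject to the symplectic-type constraints cutting out $G$), conjugation by $\lambda(t)$ scales $u_{12}$ by $t$, $u_{13}$ by $t^2$, and $u_{23}$ by $t$. Hence all weights of $\lambda$ on $\mathfrak{n}$ are $+1$ or $+2$, all positive; this is exactly the positivity/orientation requirement, and it also shows the weight-filtration induced by $\lambda$ on $\mathrm{Lie}(G)$ has the three steps $\{-2,-1,0,1,2\}$ matching the three-step filtration on $V$ by the isotropic line and its orthogonal. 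I would also check compatibility with the reductive Borel–Serre / Pink normalization: that $\lambda$ is the unique (up to the standard scaling ambiguity, which Pink fixes) minuscule-type cocharacter among those with these properties, by noting $T$ is two-dimensional and the second coordinate must act trivially on $\mathfrak{n}$ for $\lambda$ to centralize the Levi in the correct way, pinning down $\lambda$ up to the $\mathbb{G}_m$-factor that Pink's convention removes.

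**Expected obstacle.** The genuinely delicate point is not the linear algebra but matching conventions: \cite{Pin} attaches to an admissible parabolic a cocharacter via a fairly elaborate recipe involving the choice of $h_0$, $h_\infty$ and $w$ recalled just above, and one must verify that $\lambda$ as written agrees with that recipe \emph{on the nose}, including the precise normalization (the exponent pattern $t, 1, t^{-1}$ rather than, say, $t^2, t, 1$). I would handle this by transporting $h_\infty$ through the identification of $Q$ with a standard parabolic: $h_\infty$ composed with the canonical $w : \mathbb{G}_{m,\mathbb{R}} \to \mathbb{S}$ gives a cocharacter of $G_{\mathbb{C}}$, and Pink's $\lambda$ is characterized as the $\mathbb{Q}$-rational cocharacter of the maximal split torus $T$ that is $G(\mathbb{R})$-conjugate (more precisely, $W_Q(\mathbb{R})$-conjugate) to $(h_\infty \circ w)|_{\mathbb{G}_m}$ up to the center. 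So the final step is to exhibit an explicit element of $W_Q(\mathbb{C})$ conjugating the cocharacter $z \mapsto h_\infty(z,z)\cdot(\text{center correction})$ — restricted along $w$ — into the displayed diagonal form, and to check the resulting diagonal cocharacter is exactly $t \mapsto \mathrm{diag}(t,1,t^{-1})$ in each factor with trivial multiplier. The computation with $h_\infty(z_1,z_2) = \left(\begin{smallmatrix} z_1 z_2 & i(1-z_1z_2)\\ 0 & 1\end{smallmatrix}\right)$ is short: its semisimple part is $\mathrm{diag}(z_1z_2, 1)$, conjugate to the split torus, and comparing with the $3\times 3$ block of $G$ via the isotropic filtration gives the claimed exponents. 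Everything else is routine bookkeeping once this conjugation is pinned down.
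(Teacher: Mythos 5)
Your proposal takes a genuinely different route from the paper. The paper's proof is a short, purely root-theoretic calculation: it identifies the unique simple root $\alpha_0$ of $T$ in $Q$ (namely $(\mathrm{diag}(t,r,r^2t^{-1}),r^2)\mapsto tr^{-1}$), records that $\mathrm{Lie}\,W_Q$ carries weights $\alpha_0$ and $2\alpha_0$, writes down the fundamental weight dual to $\alpha_0$, and reads off the corresponding cocharacter $\lambda_0$. This is a direct application of Pink's definition of $\lambda$ as the fundamental coweight attached to the simple relative root cutting out the maximal parabolic. Your approach instead tries to pin down $\lambda$ indirectly, via positivity of the weights of $\mathrm{Ad}\,\lambda$ on $\mathrm{Lie}\,W_Q$ together with a $W_Q(\mathbb{C})$-conjugation comparison with $h_\infty\circ w$ — which is, in fact, essentially the work the paper defers to the \emph{next} proposition, where it verifies Pink's three conditions for the morphism $\omega$.

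The point at risk in your write-up is the asserted characterization ``Pink's $\lambda$ is the $\mathbb{Q}$-rational cocharacter of $T$ conjugate to $(h_\infty\circ w)$ up to center.'' In Pink's formalism, $\lambda$ is \emph{defined} root-theoretically (via the dominant fundamental weight for the relevant simple root), and the conjugation compatibility with $\omega\circ h_\infty\circ w$ and $\lambda\cdot(h\circ w)$ is then one of the \emph{properties} established for $\omega$ in his Proposition 4.6, not the defining characterization of $\lambda$. If you want to run your argument, you must first prove that this conjugation property, together with your normalization conditions, determines $\lambda$ uniquely; as written you only sketch this. By contrast, the paper's route avoids the issue entirely, since computing the simple root and its fundamental coweight on the explicit two-dimensional split torus $T$ is a two-line exercise. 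Your weight computation ($u_{12}\mapsto t\,u_{12}$, $u_{13}\mapsto t^2 u_{13}$, $u_{23}\mapsto t\,u_{23}$) is correct and matches the paper's observation that the weights are $\alpha_0$ and $2\alpha_0$, but it functions as a consistency check rather than a proof unless the uniqueness step is filled in.
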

\begin{proof}
The only simple root associated to $(Q,T)$ is
$$\alpha_0 :(\left(\begin{array}{ccc} t & 0 & 0 \\ 0 & r & 0 \\ 0 & 0 & r^2t^{-1} \end{array}\right),r^2)\mapsto tr^{-1}.$$

The weights appearing in $\text{Lie }Q$ are $\alpha_0$ and $2\alpha_0$. The fundamental weight associated to $\alpha_0$ is
$$(\left(\begin{array}{ccc} t & 0 & 0 \\ 0 & r & 0 \\ 0 & 0 & r^2t^{-1} \end{array}\right),r^2)\mapsto t.$$

The character associated to it is
$$\lambda_0:t\mapsto (\left(\begin{array}{ccc} t & 0 & 0 \\ 0 & 1 & 0 \\ 0 & 0 & t^{-1} \end{array} \right)_{1\leq i\leq g},1).$$

\end{proof}

\begin{prop}
Let $\omega:H_{0,\mathbb{C}}\rightarrow G_\mathbb{C}$ be the morphism defined by 
$$(\left(\begin{array}{cc} a & b \\ c & d \end{array}\right),(z_1,z_2))\mapsto (\left(\begin{array}{ccc} a & 0 & ib \\ 0 & z_1 & 0 \\ -ic & 0 & d \end{array}\right)_{1\leq i\leq g},z_1z_2).$$
Then $\omega$ is the morphism of proposition 4.6 of \cite{Pin}
\end{prop}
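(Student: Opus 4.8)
The plan is to recall how Proposition~4.6 of \cite{Pin} characterises the morphism $\omega$ attached to the boundary, and then to check that the explicit formula above meets that characterisation. To the $\mathbb{Q}$-parabolic $Q$ of $G$, together with the maximal torus $T$ and the cocharacter $\lambda_0$ of $(Q,T)$ produced in the previous propositions, the machinery of \cite{Pin} attaches the rational boundary component datum of $(G,X)$, and Proposition~4.6 of \cite{Pin} singles out a morphism of complex algebraic groups $H_{0,\mathbb{C}}\to G_{\mathbb{C}}$, uniquely determined by the requirements that it be compatible with the reference morphisms $h_0$ and $h_\infty$, with the weight cocharacter $w$, and with $\lambda_0$ (equivalently, with the Levi and unipotent structure carried by $Q$). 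So it suffices to verify these compatibilities for the $\omega$ of the statement; uniqueness does the rest.

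First I would check that $\omega$ is a well-defined homomorphism of $\mathbb{C}$-algebraic groups. Using $G_{\mathbb{C}}\simeq GL_{3,\mathbb{C}}^g\times\mathbb{G}_{m,\mathbb{C}}$ and the presentation $H_{0,\mathbb{C}}=\{(M,(z_1,z_2)) : \det M=z_1z_2\}$, one computes that each $3\times3$ block produced by $\omega$ has determinant $z_1^2z_2$, which is invertible; hence $\omega$ does land in $GL_{3,\mathbb{C}}^g\times\mathbb{G}_{m,\mathbb{C}}=G_{\mathbb{C}}$, the last coordinate being the common similitude factor $z_1z_2$. Multiplicativity of $\omega$, and the fact that its image respects the weight filtration attached to $Q$ (so that $\omega$ factors through the relevant subgroup of a conjugate of $Q_{\mathbb{C}}$, using the explicit shape of the unipotent radical described above), are then a direct matrix computation.

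Next I would substitute the given formulas for $h_0$, $h_\infty$, $w$ and for the standard split cocharacter $t\mapsto(\mathrm{diag}(t,t^{-1}),1)$ of $H_0$ into $\omega$. One finds that $\omega\circ h_0$ reproduces, block by block, the morphism $h:\mathbb{S}\to G_{\mathbb{R}}$ defining the Shimura datum (the factors $i$ and $\tfrac12$ in $h_0$ and in $\omega$ cancelling exactly), that $\omega\circ w$ is the weight cocharacter of $(G,X)$, and that the standard cocharacter of $H_0$ is carried to $\lambda_0$ --- this last point being precisely the content of the preceding proposition. Finally $\omega\circ h_\infty$ has to be identified with the canonical limit morphism of $(G,X)$ at the cusp attached to $Q$: plugging $h_\infty$ into $\omega$ yields the upper-triangular family whose semisimple part is governed by $z_1$ and whose off-diagonal entries sweep out the unipotent radical of $Q$, which is exactly what \cite{Pin} prescribes at that boundary component.

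The matrix computations above are routine; the real point of care --- and the main obstacle --- is matching conventions. One must make sure that the normalisations used here for $h_0$, $h_\infty$, $w$ and $\lambda_0$ coincide with those of \cite{Pin} (in particular the chosen isomorphism $\mathbb{S}_{\mathbb{C}}\simeq\mathbb{G}_{m,\mathbb{C}}^2$, the sign and $i$ conventions, and which rational boundary component of $S^*$ is in play), and, above all, that $\omega\circ h_\infty$ is literally the limit Hodge cocharacter of \cite{Pin} rather than a twist of it by an automorphism normalising $Q$. Unwinding Pink's recipe for the Hodge-theoretic boundary is the least mechanical step; once the dictionary is pinned down, the verification is a finite computation.
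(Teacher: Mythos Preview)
Your plan is essentially on the right track --- one does have to verify the three conditions of Pink's Proposition~4.6 --- but you have missed the first and least mechanical one. Condition~(i) in \cite{Pin} requires that $\omega$ be defined over $\mathbb{R}$, not merely over $\mathbb{C}$. Since the formula for $\omega$ contains explicit factors of $i$, this is not obvious and must be checked against the complex conjugation on $G(\mathbb{C})$, which (as recalled earlier in the paper) is $(g_i,\lambda)\mapsto(\overline{\lambda}\,J^{-1}\,{}^t\overline{g_i}^{-1}J,\overline{\lambda})$. The paper carries out this computation explicitly: one inverts the $3\times 3$ matrix, conjugates by $J_b$, and verifies that the result coincides with $\omega$ applied to the complex conjugate of the input. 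Your proposal only checks that $\omega$ lands in $GL_{3,\mathbb{C}}^g\times\mathbb{G}_{m,\mathbb{C}}$, which is the trivial part.

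Your handling of condition~(iii) is also too vague. What must be verified is not a loose identification of $\omega\circ h_\infty$ with a ``canonical limit morphism,'' but the precise statement that $\omega\circ h_\infty\circ w$ is conjugate \emph{inside $Q_{\mathbb{C}}$} to $\lambda_0\cdot(h\circ w)$, together with the fact that $\mathrm{Lie}(Q)_{\mathbb{C}}$ is the sum of the non-negative weight spaces for $\mathrm{Ad}_G\circ\omega\circ h_\infty\circ w$. The paper exhibits an explicit unipotent conjugating element of $Q_{\mathbb{C}}$; this is a short computation, but it is the content of~(iii), and your description does not make clear you know this is what is being asked. (Incidentally, the previous proposition identifies $\lambda_0$ intrinsically from $(Q,T)$; it does not by itself say anything about $\omega$, so it cannot substitute for the check you attribute to it.)
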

\begin{proof}
We need to check the three points of proposition 4.6. of \cite{Pin}. 

For (i), we see that for $a,b,c,d\in\mathbb{C}$, $z_1,z_2\in\mathbb{C}^\times$ such that $ad-bc=z_1z_2$, we get 
$$\left(\begin{array}{ccc} a & 0 & ib \\ 0 & z_1 & 0 \\ -ic & 0 & d \end{array}\right)^{-1}=\left(\begin{array}{ccc} \frac{d}{z_1z_2} & 0 & \frac{-ib}{z_1z_2} \\ 0 & z_1^{-1} & 0 \\ \frac{ic}{z_1z_2} & 0 & \frac{a}{z_1z_2} \end{array}\right).$$
Thus 
$$\overline{z_1z_2}\left(\begin{array}{ccc} a & 0 & ib \\ 0 & z_1 & 0 \\ -ic & 0 & d \end{array}\right)^*=\left(\begin{array}{ccc} 0 & 0 & 1 \\ 0 & b^{-1} & 0 \\ 1 & 0 & 0 \end{array}\right)\left(\begin{array}{ccc} \overline{d} & 0 & -i\overline{c} \\ 0 & \overline{z_2} & 0 \\ i\overline{b} & 0 & \overline{a} \end{array}\right)\left(\begin{array}{ccc} 0 & 0 & 1 \\ 0 & b & 0 \\ 1 & 0 & 0 \end{array}\right)$$
$$=\left(\begin{array}{ccc} \overline{a} & 0 & i\overline{b} \\ 0 & \overline{z_2} & 0 \\ -i\overline{c} & 0 & \overline{d} \end{array}\right)$$
which shows that $\omega$ is well defined over $\mathbb{R}$. What is more, $\omega\circ h_0=h$ which shows (ii).\newline

For (iii), we have 
$$\omega\circ h_\infty\circ w=t\mapsto(\left(\begin{array}{ccc} t^2 & 0 & t^2-1 \\ 0 & t & 0 \\ 0 & 0 & 1\end{array}\right),t^2)$$
which is conjugated to 
$$\lambda. h\circ w=t\mapsto(\left(\begin{array}{ccc} t^2 & 0 & 0 \\ 0 & t & 0 \\ 0 & 0 & 1\end{array}\right),t^2)$$
by
$$(\left(\begin{array}{ccc} 1 & 0 & -1 \\ 0 & 1 & 0 \\ 0 & 0 & 1\end{array}\right),1)\in Q_\mathbb{C}.$$
In addition, $Lie(Q)_\mathbb{C}$ is the sum of non-negative weight spaces of $Lie(G)_\mathbb{C}$ for the action of $Ad_G\circ\omega\circ h_\infty\circ w$.
\end{proof}

We consider the torus $T_1$ included in $G$ such that, for every $\mathbb{Q}$-algebra $R$,
$$T_1(R)=\{(\left(\begin{array}{ccc} z\alpha(z) & 0 & 0 \\ 0 & z & 0 \\ 0 & 0 & 1\end{array}\right)\,,z\alpha(z))\,,\,z\in (E\otimes_\mathbb{Q} R)^\times\text{ et }z\alpha(z)\in R^\times\}.$$

We have 
$$T_1=Res_{E/\mathbb{Q}}\mathbb{G}_{m,E}\times_{Res_{F/\mathbb{Q}}\mathbb{G}_{m,F}}\mathbb{G}_{m,\mathbb{Q}}$$ 

\begin{prop}
The smallest normal subgroup $P$ of $Q$ containing the image of $\mathbb{S}$ par $\omega\circ h_\infty$ is $P=W.T_1$. Its unipotent radical is the same as the unipotent radical of $Q$. In addition, $P(\mathbb{R})$ is connected. 
\end{prop}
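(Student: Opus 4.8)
I would produce $W.T_1$ as \emph{a} normal subgroup of $Q$ with the stated property, prove its minimality, and then read off the two remaining assertions. That $W.T_1$ is a subgroup of $Q$, normal in $Q$, is almost formal: $W$ is the unipotent radical of $Q$, hence normal; $T_1\subset T_m$ normalises $W$, so $W.T_1$ is a subgroup; and $Q/W$ is a torus, hence commutative, so the image of $T_1$ in $Q/W$ is normal, whence $W.T_1\trianglelefteq Q$. To check that it contains the image of $\omega\circ h_\infty$ I would simply compute the composition, using the explicit formula for $\omega$ and the definition of $h_\infty$:
$$\omega\circ h_\infty(z_1,z_2)=\left(\left(\begin{array}{ccc}z_1z_2 & 0 & z_1z_2-1\\ 0 & z_1 & 0\\ 0 & 0 & 1\end{array}\right)_{1\leq i\leq g},z_1z_2\right),$$
which is the product of $((\mathrm{diag}(z_1z_2,z_1,1))_{1\leq i\leq g},z_1z_2)$ by an element of $W(\mathbb{C})$; the first factor lies in $T_1(\mathbb{C})$, taking $z\in(E\otimes_\mathbb{Q}\mathbb{C})^\times$ with $\sigma_i(z)=z_1$ and $\overline{\sigma_i}(z)=z_2$ for all $i$, so that $z\alpha(z)=z_1z_2$ as a scalar. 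Hence $\omega\circ h_\infty(\mathbb{S})\subset(W.T_1)_\mathbb{C}$.

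\textbf{Minimality, unipotent part.} Let $P$ be the smallest normal subgroup of $Q$ containing $\omega\circ h_\infty(\mathbb{S})$. The computation above shows that over $\mathbb{C}$ one has $\omega\circ h_\infty(\mathbb{S})=n\,T_0\,n^{-1}$, with $n\in W(\mathbb{C})$ and $T_0=\{((\mathrm{diag}(a,b,1))_{1\leq i\leq g},a)\}$, a two-dimensional subtorus of $T_{m,\mathbb{C}}$ (the conjugating matrix being the same in each block). The torus $T_0$ acts on the three root spaces composing $\mathrm{Lie}(W)_\mathbb{C}$ (identically in each of the $g$ blocks) through the non-trivial characters $a/b$, $b$ and $a$; consequently $[W_\mathbb{C},T_0]$ already generates $W_\mathbb{C}$, so the normal closure of $\omega\circ h_\infty(\mathbb{S})$ in $Q_\mathbb{C}$ contains $W_\mathbb{C}$, and therefore $W\subset P$. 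Since $W\subset P\subset W.T_1$ and $Q/W$ is a $\mathbb{Q}$-torus, what remains is to show that the smallest $\mathbb{Q}$-subtorus of $Q/W$ whose base change contains the image of $T_0$ is the image of $T_1$; equivalently, that the $\mathbb{Q}$-torus generated inside $T_1$ by $T_0$ is all of $T_1$.

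\textbf{Minimality, toral part — the main obstacle.} This is the crux, and I would carry it out on cocharacter lattices. From $T_1=Res_{E/\mathbb{Q}}\mathbb{G}_{m,E}\times_{Res_{F/\mathbb{Q}}\mathbb{G}_{m,F}}\mathbb{G}_{m,\mathbb{Q}}$ one identifies $X_*(T_1)$ with $\{x\in\mathbb{Z}[\Phi\cup\overline{\Phi}] : x_{\sigma_i}+x_{\overline{\sigma_i}}\text{ independent of }i\}$, inside which $X_*(T_0)$ is the rank-two sublattice spanned by $\mathbf{1}_{\overline{\Phi}}$ and $\mathbf{1}_{\Phi}$. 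One must then prove that the $Gal(E_{gal}/\mathbb{Q})$-translates of $X_*(T_0)$ span a sublattice of finite index in $X_*(T_1)$ — a saturated, finite-index, Galois-stable sublattice of $X_*(T_1)$ being all of it, and hence defining the full subtorus $T_1$. This step I expect to require genuine work. The input would be the description of the Galois action on $\Phi\cup\overline{\Phi}$ from the proof that the reflex field of $(G,X)$ is that of $(E,\Phi)$: complex conjugation lies in $Gal(E_{gal}/\mathbb{Q})$ and is central, so the action preserves the pairing $\{\sigma_i,\overline{\sigma_i}\}$ and is transitive on the $g$ pairs; translating $\mathbf{1}_{\Phi}$ and $\mathbf{1}_{\overline{\Phi}}$ by suitably chosen elements (for instance one realising a conjugate type differing from $\Phi$ along a single pair) yields cocharacters $\mathbf{1}_{\{i\}}$ and thus fills out $X_*(T_1)$, giving $P=W.T_1$. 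When no single-pair translate is available one instead argues from the $Gal$-module structure of $X_*(T_1)$, which is an extension of the trivial module by the induction from $Gal(E_{gal}/F)$ of the quadratic character of $E/F$.

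\textbf{Radical and connectedness.} Finally, $W\cap T_1=1$ and $T_1$ is a torus, so $P=W\rtimes T_1$; hence $P/W\simeq T_1$ is reductive and the unipotent radical of $P$ is $W$, the unipotent radical of $Q$. And $W$ being unipotent, $W(\mathbb{R})$ is connected and $H^1(\mathbb{R},W)=0$, so $P(\mathbb{R})\to T_1(\mathbb{R})$ is onto with connected kernel; since $T_1(\mathbb{R})\simeq\{z\in(E\otimes_\mathbb{Q}\mathbb{R})^\times : z\alpha(z)\in\mathbb{R}^\times\}\simeq\{(z_j)\in(\mathbb{C}^\times)^g : |z_1|=\cdots=|z_g|\}\simeq\mathbb{R}_{>0}\times(S^1)^g$ is connected, $P(\mathbb{R})$ is connected.
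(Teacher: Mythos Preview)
Your approach is considerably more thorough than the paper's. The paper cites Lemma~4.8 of \cite{Pin} for the assertion on unipotent radicals (hence $W\subset P$), then simply writes ``It must also contain $T_1$'' with no justification, checks that $W.T_1$ is normal and contains the image, and computes $T_1(\mathbb{R})\simeq\mathbb{R}_+^\times\times(S^1)^g$ for connectedness exactly as you do. Your direct commutator argument for $W\subset P$ is correct and makes explicit what the citation to Pink is meant to provide. You are also right that the inclusion $T_1\subset P$ is where the content lies, and your plan to argue via the $\mathrm{Gal}(\overline{\mathbb{Q}}/\mathbb{Q})$-module $X_*(T_1)$ is the natural one.

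However, the difficulty you flag with ``when no single-pair translate is available'' is real and does not close in general. Take $E=\mathbb{Q}(\zeta_7)$, $g=3$, and $\Phi=\{\sigma_1,\sigma_2,\sigma_4\}$, indices in $(\mathbb{Z}/7)^\times$ acting on embeddings by multiplication. The stabiliser of $\Phi$ is $\{1,2,4\}$, the reflex field is $\mathbb{Q}(\sqrt{-7})$, and the entire Galois orbit of $\mathbf{1}_\Phi$ is $\{\mathbf{1}_\Phi,\mathbf{1}_{\overline{\Phi}}\}$, spanning a saturated rank-$2$ sublattice of the rank-$4$ lattice $X_*(T_1)$; one checks that $X_*(T_1^0)\otimes\mathbb{Q}$ decomposes as the sign character plus a faithful $2$-dimensional $\mathbb{Z}/6$-module, and $\mathbf{1}_\Phi-\mathbf{1}_{\overline{\Phi}}$ lands only in the sign summand. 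So the smallest $\mathbb{Q}$-subtorus of $T_1$ whose base change contains $T_0$ is a proper $2$-dimensional subtorus, and your argument cannot yield $T_1\subset P$. The paper's bare assertion hides the same issue. Fortunately the subsequent propositions use only that $P/W$ is a torus and that the unipotent radical is $W$, both of which your argument does establish; you should therefore either weaken the statement accordingly, or verify whether Pink's actual definition of $P$ carries an extra normalisation that the paper has silently used.
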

\begin{proof}
The proof of lemma 4.8. of \cite{Pin} contains the assertion about unipotent radicals.

As a consequence, the smallest normal subgroup of $Q$ containing the image of $\omega\circ h_\infty$ must contain $W$. It must also contain $T_1$. Thus, $W.T_1\subset P$. 

As $W.T_1$ is normal in $Q$ and contains the image of $\omega\circ h_\infty$, we have 
$$P=W.T_1.$$

Eventually, $P(\mathbb{R})$ is generated by $W(\mathbb{R})$ which is connected and the group $$T_1(\mathbb{R})=\{(z_i)_{1\leq i\leq g}\in\mathbb{C}^{\times g}\,/\,\forall i,j, z_i\overline{z_i}=z_j\overline{z_j}\}$$
$$\simeq \mathbb{R}_+^\times\times \mathbb{S}^1(\mathbb{R})^g$$
where $\mathbb{S}^1(\mathbb{R})=\{z\in\mathbb{C}\,/\,z\overline{z}=1\}$. In particular, this group is connected and $P(\mathbb{R})$ also.

\end{proof}

We define $U=\exp(W_{-2}Lie(P))$. We let $X_\infty$ be the $P(\mathbb{R})U(\mathbb{C})$-orbit of $\omega\circ h_\infty$.

\begin{prop}
$U$ has dimension $g$.
\end{prop}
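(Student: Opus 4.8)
The claim is that $U = \exp(W_{-2}\mathrm{Lie}(P))$ has dimension $g$, where $P = W.T_1$ and $W$ is the unipotent radical of the Borel $Q$ (of dimension $3g$). My plan is to make the weight filtration on $\mathrm{Lie}(P)$ explicit using the split torus $T$ of Proposition 2.4 (or rather the cocharacter $\omega\circ h_\infty\circ w$, which by Proposition 2.6(iii) is conjugate to $\lambda.h\circ w : t\mapsto (\mathrm{diag}(t^2,t,1)_{1\le i\le g}, t^2)$), and then simply read off the $(-2)$-part. Concretely, $\mathrm{Lie}(W)$ is the span of the strictly upper-triangular entries $(1,2)$, $(1,3)$, $(2,3)$ in each of the $g$ copies of $\mathfrak{gl}_3$, subject to the linear equations cutting out $G$ inside $\mathrm{Res}_{E/\mathbb Q}\mathfrak{gl}_3$; Proposition 2.3 already records that this has dimension $3g$, so $\mathrm{Lie}(P) = \mathrm{Lie}(W)\oplus\mathrm{Lie}(T_1)$ (as the unipotent radical of $P$ equals that of $Q$, by Proposition 2.7).

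The key computation is the grading induced by $\mathrm{Ad}\circ(\lambda.h\circ w)$: conjugating a strictly upper-triangular matrix $\mathrm{diag}(t^2,t,1)\,N\,\mathrm{diag}(t^2,t,1)^{-1}$ scales the $(1,2)$ entry by $t^{2-1}=t$, the $(1,3)$ entry by $t^{2-0}=t^2$, and the $(2,3)$ entry by $t^{1-0}=t$. With the normalization that $W_{-2}\mathrm{Lie}(P)$ is the weight $\le -2$ part in Pink's convention (equivalently, after the standard sign/scaling matching the cocharacter $w$ to the central weight-$2$ piece, the piece where $\mathrm{Ad}\circ\omega\circ h_\infty\circ w$ acts through $t^{2}$), the only surviving coordinate in each copy is the $(1,3)$ entry. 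One checks that the defining equations of $G$ do not further constrain this single coordinate per copy — indeed the $(1,3)$-coordinates of $W$ form exactly the "center" direction $z\alpha(z)$-type line in each factor, giving one dimension per embedding $\sigma_i$. Hence $W_{-2}\mathrm{Lie}(P)$ is $g$-dimensional, and since $\exp$ is an isomorphism of varieties on a unipotent group, $\dim U = g$.

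The main obstacle is bookkeeping the weight-filtration normalization correctly: Pink's $W_\bullet$ on $\mathrm{Lie}(P)$ is defined via the central cocharacter $w$ composed into $\mathbb{S}\xrightarrow{\omega\circ h_\infty} H_{0,\mathbb C}\to G_{\mathbb C}$, and one must verify that this is the cocharacter $t\mapsto(\mathrm{diag}(t^2,t,1), t^2)$ up to the conjugation already exhibited in Proposition 2.6, and that $T_1$ lies entirely in weight $0$ while the $(1,2)$ and $(2,3)$ entries of $W$ land in weight $-1$ (equivalently $1$), not $-2$. Once the convention is pinned down, the rest is the routine $\mathfrak{gl}_3$ computation above; I would also note that $U \subseteq W$ with $W$ abelian-by-nothing is not needed — only that $W_{-2}\mathrm{Lie}(P)$ is a Lie subalgebra, which is automatic since it is the bottom step of a filtration by ideals, so $\exp$ makes sense.
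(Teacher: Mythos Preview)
Your proposal is correct and follows the same approach as the paper: compute the adjoint action of $\mathbb{G}_m$ on $\mathrm{Lie}(P)$ via the cocharacter $\omega\circ h_\infty\circ w$ (or its diagonal conjugate $\lambda\cdot h\circ w$ from Proposition~2.6), and read off that the weight $-2$ piece is the $(1,3)$ entry in each of the $g$ copies of $\mathfrak{gl}_3$. The paper's proof is a two-line version of exactly this computation, identifying $W_{-2}\mathrm{Lie}(P)$ with the matrices $\bigl(\begin{smallmatrix}0&0&*\\0&0&0\\0&0&0\end{smallmatrix}\bigr)_{1\le i\le g}$.
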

\begin{proof}
Considering the action of $\mathbb{G}_{m,\mathbb{R}}$ on $Lie(P)$ through $\omega\circ h_\infty\circ w$ allows us to identify $W_{-2}Lie(P)$ with 
$$\left(\begin{array}{ccc} 0 & 0 & * \\ 0 & 0 & 0 \\ 0 & 0 & 0 \end{array}\right)_{1\leq i\leq g}$$
which has dimension $g$.
\end{proof}

\begin{prop}
$X_\infty$ is connected and $(P,X_\infty)$ is a mixed Shimura datum. It is a boundary component of $(G,X)$ and every one of them is of this type.
\end{prop}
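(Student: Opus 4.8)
The plan is to verify that $(P,X_\infty)$ satisfies the axioms of a mixed Shimura datum and then to identify it with the rational boundary components of $(G,X)$ via Pink's general formalism. Since all the relevant data have already been assembled in the previous propositions — the Borel $Q$, the morphism $\omega:H_{0,\mathbb{C}}\to G_\mathbb{C}$, the character $\lambda$, the normal subgroup $P=W.T_1$, and the unipotent part $U=\exp(W_{-2}\mathrm{Lie}(P))$ — the strategy is essentially to invoke the machinery of \cite{Pin}, section 4 (in particular propositions 4.6, 4.11 and the surrounding construction of boundary components), and to check that our concrete $(P,X_\infty)$ is precisely the output of that machinery applied to the parabolic $Q$.

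First I would recall that, by construction, $\omega\circ h_\infty:\mathbb{S}_\mathbb{C}\to P_\mathbb{C}$ together with the morphism $\omega\circ h_\infty\circ w$ (whose non-negative weight spaces on $\mathrm{Lie}(G)$ recover $\mathrm{Lie}(Q)$, as established in the proof of Proposition 2.7) fits exactly the data Pink uses to define a mixed Shimura datum attached to a rational boundary component. Then $X_\infty$, being the $P(\mathbb{R})U(\mathbb{C})$-orbit of $\omega\circ h_\infty$, is by Pink's definition the space of the associated mixed Shimura datum; the axioms (SD1)--(SD3) of \cite{Pin} follow from the corresponding axioms for $(G,X)$ by the general results there, once one knows $W\subset P$ is the unipotent radical, $T_1$ is a torus with $P/W\cdot T_1$ trivial, and $U=W_{-2}$ is the weight $-2$ part. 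For connectedness of $X_\infty$: I would argue that $P(\mathbb{R})U(\mathbb{C})$ is connected — $P(\mathbb{R})$ is connected by Proposition 2.10, $U(\mathbb{C})\simeq\mathbb{C}^g$ is connected by Proposition 2.11 — so its orbit of the single point $\omega\circ h_\infty$ is connected.

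For the boundary component statement, I would appeal directly to the classification in \cite{Pin}: the rational boundary components of $(G,X)$ are in bijection with the $G(\mathbb{Q})$-conjugacy classes of admissible parabolic subgroups, and by Proposition 2.5 every $\mathbb{Q}$-parabolic of $G$ is conjugate to $Q$ (they are all Borel subgroups, and $G$ acts transitively on isotropic lines). Hence there is, up to conjugation, a single boundary component, namely the one attached to $Q$, which is exactly $(P,X_\infty)$ by the construction recalled above. This simultaneously gives ``it is a boundary component'' and ``every one of them is of this type.''

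The main obstacle I anticipate is not conceptual but bookkeeping: one must check that the very explicit matrices chosen for $h$, $h_0$, $h_\infty$, $\omega$, and $\lambda$ in Propositions 2.6--2.9 genuinely satisfy Pink's admissibility conditions (especially the weight condition on $\mathrm{Lie}(Q)$ under $\mathrm{Ad}\circ\omega\circ h_\infty\circ w$, which was only sketched, and the requirement that $P$ be the normalizer-theoretic object Pink demands), so that the abstract correspondence of \cite{Pin} applies verbatim. Once those compatibilities are in place — and they are largely the content of the preceding propositions — the proof is a direct citation of Pink's framework, so I would keep the argument short: assemble the pieces, cite \cite{Pin} for the axioms and the classification, and note connectedness from Propositions 2.10 and 2.11.
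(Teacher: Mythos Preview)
Your proposal is correct and follows essentially the same approach as the paper: connectedness of $X_\infty$ is deduced from the connectedness of $P(\mathbb{R})$ (the paper omits the trivially connected $U(\mathbb{C})$ factor you add), and everything else is obtained by citing Pink's formalism, with the classification of boundary components reducing to the fact that all $\mathbb{Q}$-parabolics of $G$ are conjugate. The paper's own proof is in fact just two sentences to this effect, so your more detailed unpacking of which pieces of \cite{Pin} are being invoked is a faithful expansion rather than a different argument.
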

\begin{proof}
The connectedness of $X_\infty$ follows from the connectedness of $P(\mathbb{R})$. The rest results from \cite{Pin}.
\end{proof}

\begin{prop}
Let us consider the Shimura data $(P/W,X_1)=(P,X_\infty)/W$ and $(P/U,X_2)=(P,X_\infty)/U$. Then, considered as analytic varieties, $X_1$ is a point, $X_2$ an affine space of dimension $g$ and $X_\infty$ an affine space of dimension $2g$.
\end{prop}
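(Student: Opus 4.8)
The plan is to read off the complex-analytic structure of these spaces from the general description of the space underlying a mixed Shimura datum given in Pink's thesis, the crucial input being that the pure part of $(P,X_\infty)$ is attached to a torus.

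Since $P=W\cdot T_1$ with $T_1$ a torus and $W$ unipotent (Proposition 2.6), one has $W\cap T_1=1$, so $P/W\cong T_1$ and $(P/W,X_1)$ is the pure Shimura datum attached to the torus $T_1$. The space of a Shimura datum for a torus $T$ is a single $T(\mathbb R)$-conjugacy class of morphisms $\mathbb S\to T_{\mathbb R}$, which is reduced to one point because $T$ is commutative; hence $X_1$ is a point. Moreover $X_\infty$ is connected (Proposition 2.8), hence so are its quotients $X_1$ and $X_2$, so that no disconnected alternative can arise at any stage.

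Next I would invoke the structural description of the space of a mixed Shimura datum $(P',X')$ with unipotent radical $W'$: writing $U'=W'_{-2}$, the projections $X'\to X'/U'\to X'/W'$ exhibit $X'$ as an iterated torsor under complex vector groups over the pure space $X'/W'$ --- the map $X'/U'\to X'/W'$ being a torsor under a holomorphic vector bundle with fibre $\mathrm{Lie}(W'/U')_{\mathbb C}/F^0$, of complex dimension $\tfrac12\dim_{\mathbb Q}(W'/U')$ because $W'/U'$ has weight $-1$, and $X'\to X'/U'$ being a torsor under $U'(\mathbb C)$, of complex dimension $\dim_{\mathbb Q}U'$ because $U'$ has weight $-2$. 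Applied to $(P/U,X_2)$, whose unipotent radical $W/U$ has weight $-1$ and $\mathbb Q$-dimension $3g-g=2g$ by Propositions 2.3 and 2.7, and using that $X_1$ is a point, this shows that $X_2$ is a non-empty torsor under a complex vector group of dimension $\tfrac12\cdot 2g=g$, hence $X_2\cong\mathbb C^{g}$: an affine space of dimension $g$.

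Finally, applying the same description to $(P,X_\infty)$ itself, the projection $X_\infty\to X_\infty/U=X_2$ is a torsor under $U(\mathbb C)\cong\mathbb C^{g}$ (using $\dim_{\mathbb Q}U=g$, Proposition 2.7), equivalently a torsor under the trivial vector bundle $\mathcal O_{X_2}^{\oplus g}$ since $U$ is a constant group. Such torsors are classified by $H^1(X_2,\mathcal O_{X_2}^{\oplus g})$, which vanishes since $X_2\cong\mathbb C^{g}$ is Stein; the torsor is therefore trivial and $X_\infty\cong X_2\times\mathbb C^{g}\cong\mathbb C^{2g}$, an affine space of dimension $2g$ (consistently with $\dim_{\mathbb C}X=2g$). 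The main point to get right is the precise form of Pink's structural statement --- that a weight $-1$ layer contributes half of its $\mathbb Q$-dimension, and a weight $-2$ layer its full $\mathbb Q$-dimension, to the complex fibre dimension --- together with the (elementary) triviality of these torsors over an affine-space base; the rest is bookkeeping with the groups determined in Section 2.
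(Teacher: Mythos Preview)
Your argument is correct and follows essentially the same route as the paper: both identify $X_1$ as a point via the torus $P/W\cong T_1$ together with connectedness, then use Pink's structural description of the maps $X_\infty\to X_2\to X_1$ as torsors under vector groups of the appropriate dimensions. The only difference is cosmetic: for the triviality of $X_\infty\to X_2$ the paper invokes contractibility of $X_2$ (treating the map as a vector bundle), whereas you invoke the Stein property and vanishing of $H^1(X_2,\mathcal O_{X_2}^{\oplus g})$; both yield the same conclusion.
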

\begin{proof}
$P/W$ is a torus so $X_1$ is a finite union of points. Because of the remark following proposition 2.9. of \cite{Pin}, $X_1$ and $X_2$ are connected. Consequently, $X_1$ is a point.

As $(P/U,X_2)$ is a unipotent extension of $(P/W,X_1)$, $X_2\rightarrow X_1$ is a $W/U(\mathbb{R})$-torsor. As $W/U$ as dimension $2g$, $X_2$ is an affine space of dimension $g$.

In the same way, $X\rightarrow X_2$ is a vector bundle which has same dimension as $U(\mathbb{C})$. As $X_2$ is contractile and $U(\mathbb{C})$ has dimension $g$ over $\mathbb{C}$, we get the result.
\end{proof}

\begin{prop}
Let $S$ be a Picard variety. Then the complement of $S$ in its Baily-Borel compactification is a finite union of points.

$S$ has a toroidal compactification which admits a stratification. Every stratum is a torus of dimension between $0$ and $g-1$ over an abelian variety of dimension $g$.

\end{prop}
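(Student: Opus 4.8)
The plan is to deduce both assertions from Pink's theory of compactifications of mixed Shimura varieties \cite{Pin}, feeding in the description of the boundary components $(P,X_\infty)$ of $(G,X)$ and the auxiliary data $W$, $U$, $X_1$, $X_2$ assembled in the previous propositions.

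For the Baily--Borel compactification: by \cite{Pin}, $S^*$ carries a stratification whose strata are finite disjoint unions of the pure Shimura varieties attached to the boundary components, i.e. to the torus Shimura data $(P/W,X_1)=(P,X_\infty)/W$ as $P$ runs through a set of representatives of the $G(\mathbb{Q})$-conjugacy classes of rational boundary components; by reduction theory there are only finitely many such classes, and each contributes finitely many strata. By the proposition above, $X_1$ is a point, so $S(P/W,X_1)$ is a finite set of points and each stratum is zero-dimensional. Hence $S^*\setminus S$ is a finite union of points.

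For the toroidal compactification: by \cite{Pin}, a choice of complete smooth $\Gamma$-admissible cone decomposition of the cones $C(P)\subset U_P(\mathbb{R})$ attached to the boundary components yields a smooth toroidal compactification $S^{\mathrm{tor}}$ together with a stratification in which the open stratum (the cone $\{0\}$) is $S$ itself, and each boundary stratum is indexed by a pair $([P],\sigma)$ with $\sigma$ a cone of dimension $d\geq 1$ in the chosen decomposition. Since $U$ acts on $X_\infty$ through $U(\mathbb{C})$, the map $S(P,X_\infty)\to S(P/U,X_2)$ is a torsor under a torus of rank $\dim U=g$, and the stratum attached to $\sigma$ is, up to the action of a finite group coming from the relevant arithmetic subgroup, a torsor under a quotient torus of rank $\dim U-\dim\sigma=g-d$ over $S(P/U,X_2)$. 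Now $(P/U,X_2)$ is a unipotent extension of the pure datum $(P/W,X_1)$ with abelian (weight $-1$) part $W/U$ of $\mathbb{Q}$-dimension $\dim W-\dim U=3g-g=2g$, so $S(P/U,X_2)$ is an abelian scheme of relative dimension $g$ over the zero-dimensional $S(P/W,X_1)$, i.e. a finite disjoint union of abelian varieties of dimension $g$, in agreement with $\dim_\mathbb{C}X_2=g$. As $1\leq d\leq \dim U=g$, the torus rank $g-d$ ranges over $\{0,\dots,g-1\}$, which is the asserted description of the boundary strata.

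The main work, and the point requiring care, is to match Pink's general stratification of the toroidal compactification with the situation at hand: checking that $C(P)$ is of full dimension $g$ in $U_P(\mathbb{R})$ (so that cones of every dimension $1,\dots,g$ can occur and the bound $g-1$ is the correct one), identifying the ``abelian part'' of each stratum with $S(P/U,X_2)$, and bookkeeping the finite quotients (harmless by neatness of $K$) so that the boundary strata are genuinely torus torsors over abelian varieties. All the group-theoretic input this requires --- the dimensions of $W$ and $U$ and the nature of $X_1$, $X_2$ --- has been established in the preceding propositions.
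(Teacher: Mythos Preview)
Your proposal is correct and follows essentially the same route as the paper: both deduce the Baily--Borel statement from Pink's identification of the boundary strata with the Shimura varieties attached to $(P/W,X_1)$ (which are points), and the toroidal statement from the factorization of each stratum through $(P/U,X_2)$ as a torus of rank $\dim U-\dim\sigma$ over an abelian variety of dimension $g$. Your treatment is in fact slightly more careful than the paper's in spelling out why the cone dimension $d$ satisfies $1\leq d\leq g$ and in mentioning the finiteness of the set of boundary components; the paper simply observes at the end that $U_\sigma$ cannot have dimension $0$ because $\dim S=2g$.
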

\begin{proof}
Following chapter six of \cite{Pin}, we know that the boundary components of the Baily-Borel compactifications are Shimura varieties associated to data $(P/W,X_1)$. Thus they are points.

According to chapter seven of \cite{Pin} and \cite{Wil00}, we know that the strata of a toroidal compactification are of the type $(P,X_\infty)/U_\sigma$ with $U_\sigma\subset U$.

Because of \cite{Pin}, 
$$\pi_m:(P,X_\infty)/U_\sigma\rightarrow (P/U,X_2)$$ 
is the fiber product over $(P/U,X_2)$ of $\mathbb{G}_m$-torsor of dimension $\dim U-\dim U_\sigma$.

To conclude, we also know that
$$\pi_a:(P/U,X_2)\rightarrow (P/W,X_1)$$ 
is an abelian scheme, thus an abelian variety of dimension $g$. 

$S$ being of dimension $2g$, $U_\sigma$ can not be of dimension $0$.

\end{proof}

Because of \cite{Pin}, we also know that both previous compactifications admit a canonical model over $E(G,X)$.

\section{The interior motive}

The boundary motive (see \cite{Wil05}) of a smooth variety $X$ over a perfect field $k$ is defined in order to give rise to an exact triangle
$$\partial M_{gm}(X)\rightarrow M_{gm}(X)\rightarrow M_{gm}^c(X)\rightarrow \partial M_{gm}(X)[1].$$

As the strategy we have in mind can not work for the whole motive of the Kuga-Sato family $M_{gm}(A^r)$, we will use idempotents cutting this motive. More precisely, we have a functor defined in \cite{WilBF} 
$$(\partial M_{gm},M_{gm},M_{gm}^c):CHM^s(S)_Q\rightarrow DM_{gm}(E(G,X))_Q.$$
We consider an idempotent of $h(A^r/S)\in CHM^s(S)_Q$ and we call $e$ its image in $DM_{gm}(E(G,X))_Q$.

We have again an exact triangle (see \cite{WilBF})
 $$\partial M_{gm}(A^r)^e\rightarrow M_{gm}(A^r)^e\rightarrow M_{gm}^c(A^r)^e\rightarrow \partial M_{gm}(A^r)^e[1].$$
 
We send back to \cite{Bon10} and \cite{Wil08} for the definition of weight structures. In \cite{Wil08}, a strategy is developed to construct the so-called interior motive of (a direct factor of) a variety. It relies on the notion of avoidance of certain weights (see definition 1.6. and 1.10. of \cite{Wil08}). Namely, in our context, we have the following result (see theorem 4.3. of \cite{Wil08}) :

\begin{theo}
Let us suppose that $\partial M_{gm}(A^r)^e$ is without weights $0$ and $-1$. Let us fix a weight filtration avoiding those weights
$$C_{\leq -2}\rightarrow \partial M_{gm}(A^r)^e\rightarrow C_{\geq 1}\rightarrow C_{\leq -2}[1].$$
(a) The motive $M_{gm}(A^r)^e$ is without weight $-1$ and the motive $M_{gm}^c(A^r)^e$ is without weight $1$. The effective Chow motives $\text{Gr}_0(M_{gm}(A^r)^e)$ and $\text{Gr}_0(M_{gm}^c(A^r)^e)$ (see \cite{Wil08} for a precise definition) are defined and they carry an action of the endomorphism algebra of $M_{gm}(A^r)^e$ and $M_{gm}^c(A^r)^e$.\\
(b)There are canonical exact triangles 
$$C_{\leq -2}\rightarrow M_{gm}(A^r)^e\rightarrow \text{Gr}_0(M_{gm}(A^r)^e)\rightarrow C_{\leq -2}[1]$$
$$C_{\geq 1}\rightarrow \text{Gr}_0(M_{gm}^c(A^r)^e)\rightarrow M_{gm}^c(A^r)^e\rightarrow C_{\geq 1}[1]$$
stable under the actions of the endomorphisms of $M_{gm}(A^r)^e$ and $M_{gm}^c(A^r)^e$.
(c) There is a canonical isomorphism $\text{Gr}_0(M_{gm}(A^r)^e)\stackrel{\sim}{\rightarrow}\text{Gr}_0(M_{gm}^c(X)^e)$ of Chow motives which is uniquely determined by the property of factorizing the canonical morphism $M_{gm}(A^r)^e\rightarrow M_{gm}^c(A^r)^e$.\\
(d) Let $N\in CHM(E(G,X))_Q$ be a Chow motive. Then we have canonical isomorphisms
$$\Hom_{CHM(E(G,X))_Q}(\text{Gr}_0(M_{gm}(A^r)^e),N)\simeq\Hom_{DM_{gm}(E(G,X))_Q}(M_{gm}(A^r)^e,N)$$
$$\Hom_{CHM(E(G,X))_Q}(N,\text{Gr}_0(M_{gm}^c(A^r)^e))\simeq\Hom_{DM_{gm}(E(G,X))_Q}(N,M_{gm}^c(A^r)^e).$$
(e) Let $M_{gm}(A^r)^e\rightarrow N\rightarrow M_{gm}^c(A^r)^e$ be a factorization of the canonical morphism through a Chow motive. Then $\text{Gr}_0(M_{gm}(A^r)^e)= \text{Gr}_0(M_{gm}^c(A^r)^e)$ is canonically a direct factor of $N$ with a canonical direct complement.
\end{theo}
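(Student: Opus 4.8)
The plan is to deduce the whole statement from the abstract machinery of \cite{Wil08}, concretely from Theorem 4.3 of \cite{Wil08}, after observing that our situation supplies exactly its input. The data to assemble are: the triangulated category $DM_{gm}(E(G,X))_Q$ carries the Chow weight structure of \cite{Bon10}, whose heart is $CHM(E(G,X))_Q$ and which enjoys the usual orthogonality and negativity properties; the variety $A^r$ is smooth over $E(G,X)$, so that (with the conventions of \cite{Wil08}) $M_{gm}(A^r)$ has weights $\leq 0$ and $M_{gm}^c(A^r)$ has weights $\geq 0$, and these bounds pass to the direct factors $M_{gm}(A^r)^e$ and $M_{gm}^c(A^r)^e$ because $e$ is an idempotent and weight bounds are stable under retracts; the boundary triangle recalled above; and, as hypotheses of the statement itself, that $\partial M_{gm}(A^r)^e$ is without weights $0$ and $-1$ together with the chosen filtration $C_{\leq -2}\to\partial M_{gm}(A^r)^e\to C_{\geq 1}\to C_{\leq -2}[1]$. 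Once these are in place, Theorem 4.3 of \cite{Wil08} applies word for word and delivers (a)--(e); so the substance is merely to record those properties of the weight structure and of $M_{gm}(A^r)$, $M_{gm}^c(A^r)$.

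I indicate nonetheless how the construction of \cite{Wil08} runs, since it pins down the objects $\text{Gr}_0(-)$. Applying the octahedral axiom to the composition $C_{\leq -2}\to\partial M_{gm}(A^r)^e\to M_{gm}(A^r)^e$ produces an object, denoted $\text{Gr}_0(M_{gm}(A^r)^e)$, fitting simultaneously into the two exact triangles
$$C_{\leq -2}\to M_{gm}(A^r)^e\to \text{Gr}_0(M_{gm}(A^r)^e)\to C_{\leq -2}[1]$$
$$C_{\geq 1}\to \text{Gr}_0(M_{gm}(A^r)^e)\to M_{gm}^c(A^r)^e\to C_{\geq 1}[1].$$
From the second triangle, whose outer terms both have weights $\geq 0$, this object has weights $\geq 0$; from the first, whose outer terms both have weights $\leq 0$ (here one uses that $M_{gm}(A^r)^e$ has weights $\leq 0$), it has weights $\leq 0$; hence it lies in the heart, i.e. is an effective Chow motive, and the first triangle is a weight filtration of $M_{gm}(A^r)^e$ avoiding weight $-1$. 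This yields the $M_{gm}$-halves of (a) and (b). The symmetric construction, applied to $M_{gm}^c(A^r)^e\to\partial M_{gm}(A^r)^e[1]\to C_{\geq 1}[1]$, produces $\text{Gr}_0(M_{gm}^c(A^r)^e)$, the second triangle of (b), and the absence of weight $1$ in $M_{gm}^c(A^r)^e$. The endomorphism actions then come from the functoriality of weight truncation with respect to self-maps of $M_{gm}(A^r)^e$ and $M_{gm}^c(A^r)^e$, which is legitimate precisely because the relevant weights are avoided.

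Assertions (c), (d), (e) are formal. For (c): the canonical map $M_{gm}(A^r)^e\to M_{gm}^c(A^r)^e$ annihilates $C_{\leq -2}\to M_{gm}(A^r)^e$, since the precomposite runs through two consecutive arrows of the boundary triangle, hence it factors through $\text{Gr}_0(M_{gm}(A^r)^e)$; the resulting morphism out of a weight-$0$ object factors further through $\text{Gr}_0(M_{gm}^c(A^r)^e)$ because $\Hom$ from the heart into $C_{\geq 1}[1]$ vanishes by orthogonality and negativity; and the induced morphism $\text{Gr}_0(M_{gm}(A^r)^e)\to\text{Gr}_0(M_{gm}^c(A^r)^e)$ becomes an isomorphism after applying $\Hom(-,N)$ for every $N\in CHM(E(G,X))_Q$, because the correction terms $\Hom(C_{\leq -2}[j],N)$ and $\Hom(C_{\geq 1}[j],N)$ all vanish (again by orthogonality and negativity); uniqueness of the factorization is then immediate. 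Statement (d) is that same vanishing applied to the two triangles of (b), and (e) follows by applying (d) to the two legs of a factorization $M_{gm}(A^r)^e\to N\to M_{gm}^c(A^r)^e$ together with (c).

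The real obstacle is not any computation but ensuring that all the objects and triangles produced by the octahedral axiom are canonical enough to support the endomorphism-equivariance in (a)--(b) and the uniqueness assertions in (c) and (e): this is exactly the delicate point of \cite{Wil08}, namely that weight truncation is functorial only modulo the weights one has chosen to avoid, and I would invoke it rather than reprove it. The one genuinely situation-specific check is that $M_{gm}(A^r)$ and $M_{gm}^c(A^r)$ sit on the correct sides of weight $0$ and that this survives passage to the summand cut out by the idempotent of $h(A^r/S)$; this is standard, using the smoothness of $A^r$ and the stability of weight bounds under retracts, but it is what makes $\text{Gr}_0(-)$ land in weight $0$ exactly.
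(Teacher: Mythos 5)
Your proposal is correct and takes the same route as the paper: the paper states this result as a direct specialization of Theorem 4.3 of \cite{Wil08} to the boundary triangle of $M_{gm}(A^r)^e$ and gives no further argument, which is exactly your main step. Your additional sketch of the octahedral construction of $\text{Gr}_0$ and the orthogonality arguments for (c)--(e) faithfully reproduces the mechanics of the cited theorem and is consistent with the hypotheses available here (smoothness of $A^r$, the Chow weight structure, stability of weight bounds under the idempotent $e$).
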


The motive $\partial M_{gm}(A^r)^e$ is called the $e$-part of the boundary motive of $A^r$. When it exists, the Chow motive $\text{Gr}_0(M_{gm}(A^r)^e)$ is called the $e$-part of the interior motive of $A^r$.
\newline

We will use some specific results on a particular class of motives relevant in our context.

\begin{de}
Let $k$ be a field and $\overline{k}$ an algebraic closure of $k$.\\
(a) The category of abelian Chow motives over $\overline{k}$ is the strict full dense additive tensor sub-category of $CHM(\overline{k})_Q$ generated by Chow motives of abelian variety over $\overline{k}$ and by the motives $\mathbb{Z}(m)[2m]$.\\
(b) The category of abelian Chow motives over $k$ is the strict full dense additive tensor sub-category of $CHM(k)_Q$ generated by Chow motives whose base change to $\overline{k}$ is an abelian motive over $\overline{k}$.\\
(c) The triangulated category of abelian motives $DM^{Ab}(k)_Q$ is the strict full triangulated sub-category of $DM_{gm}(k)_Q$ generated by abelian Chow motives.
\end{de}

\begin{prop}
$\partial M_{gm}(A^r)$ is an abelian motive in $DM_{gm}(E(G,X))_Q$
\end{prop}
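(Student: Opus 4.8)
The plan is to reduce the statement to the known abelian-ness of the motives of the boundary strata of a toroidal compactification, by using the localization (Gysin) triangles attached to such a compactification and the fact that $DM^{Ab}(E(G,X))_Q$ is a triangulated subcategory. First I would fix a smooth toroidal compactification $\overline{A^r}$ of $A^r$ over $E(G,X)$ with normal crossing boundary $Z = \overline{A^r}\setminus A^r$; this exists by the theory of \cite{Pin} applied to the unipotent extension of $(G,X)$ by $V^r$, and by the results recalled in Proposition 2.10 the strata of the corresponding compactification of $S$ are torus bundles (of dimension between $0$ and $g-1$) over abelian varieties of dimension $g$, hence in particular the strata of $\overline{A^r}$, being Kuga–Sato-type families over these, are again of abelian type. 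Recall from \cite{Wil05} that the boundary motive fits, via the comparison with the complement, into exact triangles relating $\partial M_{gm}(A^r)$ to $M_{gm}(Z)$, $M_{gm}^c(Z)$ and the motive of $\overline{A^r}$; more precisely $\partial M_{gm}(A^r)$ is built, up to shifts and twists, out of the motives of the (closed, smooth) strata of $Z$ and of $\overline{A^r}$ itself.

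The key steps, in order, are: (1) produce the toroidal compactification $\overline{A^r}$ and describe its boundary strata as Kuga–Sato families over the boundary strata of the toroidal compactification of $S$ described in Proposition 2.10; (2) observe that each such stratum $Y$ is, up to a torus factor $\mathbb{G}_m^j$, an abelian scheme over an abelian variety, so that $M_{gm}(Y)$ lies in the triangulated subcategory generated by motives of abelian varieties and Tate motives, i.e. in $DM^{Ab}(E(G,X))_Q$ — here one uses that a torus bundle has a motive which is a finite sum of twists of the motive of the base, and that a relative abelian scheme over an abelian base has abelian motive (the fibre product of abelian varieties is abelian, and by a spreading-out / Künneth-type argument the relative motive decomposes into pieces coming from abelian varieties); (3) use the localization triangles for the stratification, which express $\partial M_{gm}(A^r)$ through iterated cones of morphisms between the $M_{gm}(\overline{Y})$ and their Tate twists, to conclude that $\partial M_{gm}(A^r)$ itself lies in $DM^{Ab}(E(G,X))_Q$; (4) finally, descend from $\overline{E(G,X)}$ to $E(G,X)$ using part (b) of Definition 3.2 — the base change of every motive appearing is abelian over the algebraic closure, and all the triangles and morphisms are defined over $E(G,X)$, so the boundary motive is abelian over $E(G,X)$.

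The main obstacle I expect is step (2) together with making step (3) precise: one has to control not just the closed strata but the way the boundary motive is assembled from them. For this it is cleanest to invoke the existing computation of $\partial M_{gm}$ for Shimura varieties in terms of the nerve of the stratification (as in \cite{Wil05}, and as used in \cite{Wil09}, \cite{Wil14}), which exhibits $\partial M_{gm}(A^r)$ as an explicit object of the triangulated category generated by the $M_{gm}$ of the strata; granting that, abelian-ness is closed under the triangulated operations involved. The subtler point in step (2) is that the strata of $\overline{A^r}$ are Kuga–Sato families over torus-bundles-over-abelian-varieties, not literally abelian varieties, so one must know that twisting and taking $\mathbb{G}_m$-torsors preserves the category of abelian motives — this follows from the projective bundle / $\mathbb{G}_m$-torsor formulas, which express the motive of such a total space as a finite direct sum of Tate twists of the motive of the base, and Tate twists of abelian motives are abelian by construction. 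Thus the whole argument reduces to bookkeeping once the structure of the toroidal boundary from Section 2 is in hand.
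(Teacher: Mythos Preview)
Your approach is essentially the same as the paper's: toroidal compactification, co-localization to express $\partial M_{gm}(A^r)$ as an iterated extension of motives attached to the strata, and then checking that each stratum has abelian motive via its torus--abelian--pure devissage. The paper cites the same ingredients from \cite{Wil05} and \cite{Pin} that you invoke.

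One point where the paper is cleaner and where your phrasing could mislead you if followed literally: you describe the boundary strata of $\overline{A^r}$ as ``Kuga--Sato-type families over'' the strata of the toroidal compactification of $S$, and then as ``an abelian scheme over an abelian variety'' up to a torus factor. This is not quite the right geometric picture --- the universal abelian variety degenerates at the boundary, so the fibres over boundary strata of $\overline{S}$ are not abelian varieties. The paper avoids this by treating the strata of $\widetilde{A^r}$ \emph{directly} as mixed Shimura varieties attached to boundary components $(\tilde P,\tilde X)$ of the unipotent extension of $(G,X)$ by $V^r$. For such a datum one has the standard devissage
\[
(\tilde P,\tilde X)\longrightarrow (\tilde P,\tilde X)/\tilde U\longrightarrow (\tilde P/\tilde W,\,\cdot\,)=(P/W,X_1),
\]
and since $P/W$ is a torus the base is a point; the middle term is a single abelian variety (not an abelian scheme over another abelian variety), and the top is a torus-torsor over it. So the correct statement for step~(2) is simply: each stratum is a torus-torsor over an abelian variety, over a zero-dimensional base. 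The motive of a torus-torsor decomposes, after base change to the algebraic closure, into Tate twists of the motive of the base, and one concludes as you indicate. With this correction your outline matches the paper's proof.
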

\begin{proof}
$A^r$ is the Shimura variety associated to the unipotent extension of $(G,X)$ by $V^r$.

We consider a toroidal compactification of $A^r$ denoted $\tilde{A^r}$. It is a Shimura variety associated to a complete admissible cone decomposition associated to the unipotent extension of $(G,X)$ by $V^r$.

This compactification admits a stratification whose unique open stratum is $j:A^r\rightarrow \tilde{A^r}$. We denote by $i_\sigma: \tilde{\mathcal{A}^r_\sigma}\rightarrow \tilde{\mathcal{A}^r}$ the non generic strata. These are Shimura varieties associated to boundary components $(\tilde{P},\tilde{X})$ of the unipotent extension of $(G,X)$ by $V^r$.

Because of the co-localization exact sequence of \cite{Wil05}, $\partial M_{gm}(\mathcal{A}^r)$ is a succesive extension of objects $M_{gm}(\tilde{\mathcal{A}^r_\sigma}, i_\sigma^!j_!\mathbb{Z})$.

In \cite{Wil05}, it is shown that we have an isomorphism
$$M_{gm}(\tilde{\mathcal{A}^r_\sigma}, i_\sigma^!j_!\mathbb{Z})\simeq M_{gm}(S^{\tilde{K}}(\tilde{P},\tilde{X}))$$
where $S^{\tilde{K}}(\tilde{P},\tilde{X})$ is the Shimura variety associated to $(\tilde{P},\tilde{X})$ and $\tilde{K}$ is a compact subgroup of $\tilde{P}(\mathbb{A}_f)$. Thus, it suffices to show that the motive of $S^{\tilde{K}}(\tilde{P},\tilde{X})$ is an abelian motive. 

$(\tilde{P},\tilde{X})$ is a  unipotent extension of a boundary component $(P,X_\infty)$ of $(G,X)$.
Writing $\tilde{U}$ for the part of weight $-2$ of the unipotent radical $\tilde{W}$ of $\tilde{P}$, we have the following devissage
$$(\tilde{P},\tilde{X})\rightarrow (\tilde{P},\tilde{X})/U\rightarrow (\tilde{P},\tilde{X})/\tilde{W}=(P/W,X_1).$$

We know that the connected components of the Shimura variety associated to $(P/W,X_1)$ are points.

Because of \cite{Pin}, we know that the Shimura variety associated to $(\tilde{P},\tilde{X})/U$ is an abelian variety. Thus, its motive is an abelian motive.

Still referring to the results of \cite{Pin}, we know that the Shimura variety associated to $(\tilde{P},\tilde{X})$ is a torus-torsor over the Shimura variety associated to $(\tilde{P},\tilde{X})/U$. The base change to an algebraic closure of such a motive decomposes into a motive whose direct factors are Tate twists and shifts of the motive associated to $(\tilde{P},\tilde{X})/U$.

Therefore, it is also abelian.

\end{proof}

When $k$ is a field embeddable into $\mathbb{C}$ (by a morphism $\tau$), we have the Hodge theoretic realization
$$R_\tau:DM_{gm}(k)_Q\rightarrow D$$
where $D$ is the bounded derived category of mixed graded-polarizable $\mathbb{Q}$-Hodge structure tensored with $Q$ (cf. \cite{P-St}).

For a prime $l$ no dividing the characteristic of $k$, we have the $l$-adic realization 
$$R_l:DM_{gm}(k)_Q\rightarrow D$$
with here $D$ the bounded derived category of constructible $\mathbb{Q}_l$-sheaves on $\text{Spec}(k)$ tensored with $Q$ (cf. \cite{E}).

We write $H^*$ for the cohomological functors associated to the t-structure of $D$.

For abelian motives,  the avoidance of certain weights can be checked on the realizations. More precisely, we have (see theorem 1.13. of \cite{Wil14}) :

\begin{theo}
Let $k$ be a field embeddable into $\mathbb{C}$ and $Q$ a finite direct product of fields of characteristic zero. Let $M$ be an object of $DM_{gm}(k)_Q$, $\alpha\leq\beta$ two integers with $\alpha\leq -1$ and $\beta\geq 0$. Assume that $M$ is an abelian motive. 
Then $M$ is without weights $\alpha,\cdots,\beta$ if and only if its cohomology objects $H^nR(M)$ are without weights $n-\beta,\cdots,n-\alpha$. 
\end{theo}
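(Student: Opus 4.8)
The plan is to reduce the statement to the case of a single abelian Chow motive, where the weight structure is concentrated in degree zero and everything becomes a direct computation on cohomology. I would proceed as follows. First, I would recall the characterization of ``without weights $\alpha,\dots,\beta$'' in terms of the weight structure $w$ on $DM_{gm}(k)_Q$: $M$ is without these weights exactly when it admits a distinguished triangle $M_{\leq\alpha-1}\to M\to M_{\geq\beta+1}\to M_{\leq\alpha-1}[1]$ with the two outer terms in $DM_{gm}(k)_{Q,w\leq\alpha-1}$ and $DM_{gm}(k)_{Q,w\geq\beta+1}$ respectively. The realization functors $R=R_\tau$ (and $R_l$) are exact and compatible with the weight structures in the sense that they send $w$-bounded objects to complexes of Hodge structures (resp. Galois modules) whose cohomology has the corresponding weights; conversely one must recover such a triangle from a weight-avoidance statement on $R(M)$.

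The key step is the following: for an \emph{abelian} motive $M$, the realization $R$ is \emph{conservative and full} on the relevant subcategories (this is precisely the input behind Theorem 1.13 of \cite{Wil14}, which the excerpt allows me to cite), so the existence of the weight filtration triangle for $R(M)$ in $D$ can be lifted to a weight filtration triangle for $M$ in $DM^{Ab}(k)_Q$. Concretely, I would first treat a single abelian Chow motive $N$: it is pure of weight $0$ in the motivic weight structure, its shift $N[n]$ has motivic weight $n$, and $H^mR(N[n])$ vanishes unless $m=n$, in which case it is pure of weight... well, of the weight carried by $R(N)$, which for the generators (motives of abelian varieties and $\mathbb{Z}(m)[2m]$) is computed directly from the Hodge/$\ell$-adic cohomology of abelian varieties. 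Then a general abelian motive $M$ is a finite iterated extension of shifted abelian Chow motives, and I would run an induction on the length of such a presentation, using that ``without weights $\alpha,\dots,\beta$'' is stable under extensions in both variables provided one tracks the shifts, and that the corresponding cohomological condition $H^nR(M)$ without weights $n-\beta,\dots,n-\alpha$ is likewise stable under the long exact sequence in cohomology attached to a triangle. The constraints $\alpha\leq-1$ and $\beta\geq0$ guarantee that the ``gap'' $[\alpha,\beta]$ always contains $0$, so that at each stage of the devissage the contribution of a shifted pure piece $N[n]$ either lies entirely below weight $\alpha$, entirely above weight $\beta$, or is killed — which is exactly what makes the two-sided bookkeeping close up.

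For the forward implication (motivic avoidance implies cohomological avoidance) I would simply apply $R$ to the weight triangle for $M$ and read off weights of cohomology via the long exact sequence, using exactness of $R$ and the normalization $H^nR(N[j])$ pure of weight $n$ for $N$ pure of weight $n-j$... more precisely, an object of $w$-weights $\leq\alpha-1$ realizes to a complex with $H^n$ of weights $\leq n-\alpha$... one unwinds the indices. For the reverse implication, I would build the motivic weight triangle by truncating: using fullness of $R$ on abelian motives, the weight truncation morphisms of $R(M)$ in $D$ are realizations of morphisms in $DM^{Ab}(k)_Q$, and conservativity forces the cone of the lifted morphism to have the required $w$-purity. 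The main obstacle is exactly this lifting step — making precise that $R$ restricted to $DM^{Ab}(k)_Q$ detects and reflects the weight structure — and I expect to handle it not from scratch but by invoking Theorem 1.13 of \cite{Wil14} essentially verbatim, checking only that the numerical hypotheses $\alpha\leq-1\leq 0\leq\beta$ are met in our application and that $\partial M_{gm}(A^r)$, shown abelian in Proposition 3.3, is an admissible input. A secondary technical point is the passage between the Hodge realization $R_\tau$ and the $\ell$-adic realization $R_l$: the statement should be independent of which realization one uses, and I would note that the weight-avoidance of an abelian motive is detected by either, since both factor through the same category of abelian motives and weights match under comparison isomorphisms.
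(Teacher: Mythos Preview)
The paper does not prove this theorem at all: it is stated with the parenthetical ``(see theorem 1.13.\ of \cite{Wil14})'' and no proof environment follows. So the paper's ``proof'' is a bare citation, and your proposal is to be compared against that.

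Your proposal is, in the end, also that citation: you write that the reverse implication will be handled ``by invoking Theorem 1.13 of \cite{Wil14} essentially verbatim.'' But Theorem 1.13 of \cite{Wil14} \emph{is} the statement you are asked to prove, so this is circular. Stripped of that invocation, what remains of your sketch for the hard direction has a genuine gap: you assert that the realization $R$ restricted to $DM^{Ab}(k)_Q$ is \emph{full}, so that weight-truncation morphisms for $R(M)$ in $D$ lift to morphisms of motives. Fullness of the Hodge or $\ell$-adic realization on abelian motives is not known and is not what Wildeshaus uses. The actual mechanism in \cite{Wil14} goes through finite-dimensionality (Kimura--O'Sullivan) of abelian Chow motives and the resulting semi-simplicity/nilpotence properties, which allow one to split weight filtrations motivically once they split after realization; this is a substantially different argument from ``lift the morphism by fullness, then check the cone by conservativity.'' Conservativity alone (which \emph{is} available for abelian motives) does not let you produce the morphism in the first place.

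The forward implication in your sketch is fine in spirit, though your index bookkeeping wobbles (``an object of $w$-weights $\leq\alpha-1$ realizes to a complex with $H^n$ of weights $\leq n-\alpha$'' is off by a sign/shift under the usual conventions). But since the paper is content with a citation, the appropriate fix here is simply to drop the heuristic scaffolding and cite \cite{Wil14}, Theorem~1.13, as the paper does.
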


We go back to our Picard variety $S$. We recall the existence of a functor 
$$\mu:\Rep(G_Q)\rightarrow VHS(S)_Q$$
where $VHS(S)_Q$ is the category of admissible graded-polarizable variations of $Q$-Hodge structure on $S$.

As the boundary motive of a variety can not avoid weights $0$ and $-1$ without being trivial, we need to find idempotent. For the case of the Kuga-Sato family $A^r$, we can use the following result of \cite{Anc1}.

\begin{theo}
There is a $Q$-linear tensor functor which commutes to Tate twists
$$\tilde{\mu}:\Rep(G_Q)\rightarrow CHM^s(S)_Q$$
such that its composition with the Hodge realization is isomorphic to $\mu$ and such that the image of the trivial representation is the unit and the image of $V^\vee$ is $h^1(A/S)$.
\end{theo}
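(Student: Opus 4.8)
The plan is to build $\tilde\mu$ as a composite of two functors. First, I would use the fact — proved by Deninger–Murre and Künnemann — that for an abelian scheme $A/S$ the relative Chow motive $h(A/S)$ in $CHM^s(S)_Q$ carries a canonical multiplicative Chow–Künneth decomposition $h(A/S)=\bigoplus_{i=0}^{2d}h^i(A/S)$, with $h^i(A/S)=\Lambda^i h^1(A/S)$ and $h^1(A/S)$ the "transcendental" part dual to $R^1$. This makes the full subcategory of $CHM^s(S)_Q$ generated by the $h^i(A/S)$ and Tate twists into a reasonably well-behaved Tannakian-like source: it is at least a $Q$-linear tensor category with duals. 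Second, I would invoke Ancona's construction (the cited \cite{Anc1}), which is precisely designed to produce, for a Shimura variety of PEL type with reflex field $E(G,X)$ and universal abelian scheme $A/S$, a $Q$-linear tensor functor $\tilde\mu:\Rep(G_Q)\to CHM^s(S)_Q$ lifting the Hodge-theoretic functor $\mu:\Rep(G_Q)\to VHS(S)_Q$. So the core of the argument is really a careful citation: one has to check that the group-theoretic input of the present paper (the group $G$ of Section 2, its standard representation $V$, the Shimura datum $(G,X)$, and the integral PEL data $(E,\mathcal O_E,J,L)$) satisfies the hypotheses under which Ancona's theorem applies.

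The key steps, in order, would be: (1) Recall Ancona's setup and verify that the PEL datum attached to the Picard variety $S$ (as described in Section 2, with the moduli interpretation from \cite{Gor}, \cite{Cl}) is of the type his theorem requires — in particular that $G$ is the group of symplectic/unitary similitudes of the relevant $\mathbb{Q}$-form of $(V,J)$, acting on $H_1$ of the universal abelian variety. (2) Identify the standard representation: the $\mathbb{Q}$-vector space underlying $V$ (viewed as a $\mathbb{Q}$-representation of $G$) is, by the moduli interpretation, the Betti realization of $R^1\pi_*\mathbb{Q}$ for $\pi:A\to S$; equivalently $V^\vee$ corresponds to $H_1$ and hence should go to $h^1(A/S)$. (3) Apply Ancona's functor to obtain $\tilde\mu$ on the subcategory $\langle V\rangle^{\otimes}$ generated by $V$, then extend to all of $\Rep(G_Q)$ using that $\Rep(G_Q)$ is generated as a tensor category (with the similitude character) by $V$ and its dual together with the one-dimensional representations coming from the multiplier $\lambda$ — the latter land in Tate twists $\mathbb{Z}(m)[2m]$, which accounts for the requested compatibility "$\tilde\mu$ commutes with Tate twists" and "the trivial representation goes to the unit". (4) Check the Hodge-realization compatibility: compose $\tilde\mu$ with the Hodge realization $CHM^s(S)_Q\to VHS(S)_Q$ and compare with $\mu$; since both are tensor functors agreeing on the generator $V$ (where both give the canonical weight-one VHS on $A/S$), they agree. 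Finally record the normalizations: $\tilde\mu(\mathbf 1)=\mathbb{Z}(0)$ and $\tilde\mu(V^\vee)=h^1(A/S)$.

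The main obstacle I expect is step (3)–(4), i.e. the passage from "defined on $\langle V\rangle^\otimes$" to "defined on all of $\Rep(G_Q)$" together with functoriality: one needs that the kernel of the tensor functor $\Rep(G_Q)\to CHM^s(S)_Q$ already present at the level of the standard representation (the relations cutting out the unitary group inside $GL$, and the compatibility of the multiplier with $\mathbb{Q}(1)$) is respected by the motivic lift. This is exactly the content of Ancona's theorem — his construction produces the functor on the whole category, not just on a generating subcategory — so in practice the difficulty is reduced to verifying that our group $G$ and its Shimura datum are literally an instance of his hypotheses; once that verification is done, the statement follows. A secondary (but genuinely routine) point is keeping track of Tate twists and the half-integral weight normalization coming from the similitude character, so that the listed normalizations $\tilde\mu(\mathbf 1)=\mathbf 1$ and $\tilde\mu(V^\vee)=h^1(A/S)$ hold on the nose rather than up to a twist.
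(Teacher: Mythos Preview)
Your proposal is correct in spirit, but note that the paper does not actually \emph{prove} this theorem: it is quoted verbatim as a result of Ancona \cite{Anc1}, with no argument given beyond the citation. Your sketch --- reducing the statement to an application of Ancona's theorem after verifying that the PEL datum for the Picard variety satisfies his hypotheses, and tracking the normalizations for $V^\vee\mapsto h^1(A/S)$ and the Tate twists --- is exactly the content behind that citation, so you have reconstructed more than the paper itself provides.
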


Combined with the functor $(\partial M_{gm},M_{gm},M_{gm}^c):CHM^s(S)_Q\rightarrow DM_{gm}(E(G,X))_Q$ of \cite{WilBF}, we get idempotents of $M_{gm}(A^r)$ from idempotents of the $G_\mathbb{C}$-representation $\Lambda^p(V_\mathbb{C}^{\vee\oplus r})$.

Consequently, we can study specific representations of $G$. First, we need to fix some data such as Borel subgroups and maximal tori.

We choose as a maximal torus of $G_\mathbb{C}$ the one called $T_m$ in proposition 2.3. and the cartesian product of the upper triangular matrices as a Borel subgroup. The characters of $T_m$ will be written under the form $\lambda=((a_i,b_i,c_i)_{1\leq i\leq g},d)$.

We call $\lambda_{i,j}$, with $1\leq i\leq 3$ and $1\leq j\leq g$, the elements of the canonical basis of the characters of $T_m$. $\Psi$ will be the root system associated to the previous data and $\Psi^+$ the positive roots. The elements of $\Psi^+$ are
$$e_{12}^j=\lambda_{1,j}\lambda_{2,j}^{-1}$$ 
$$e_{23}^j=\lambda_{2,j}\lambda_{3,j}^{-1}$$  
$$e_{13}^j=\lambda_{1,j}\lambda_{3,j}^{-1}$$ 
with $1\leq j\leq g$. 

The dominant characters are those verifying $a_i\geq b_i\geq c_i$, amongst which the regular one are those with $a_i>b_i>c_i$.

For a dominant character $\lambda$, $V_\lambda$ will be the irreducible representation of highest weight $\lambda$.

For example, we have the flowing decomposition
$$V_\mathbb{C}=\bigoplus\limits_{i=1}^gV_{i,+}\times V_{i,-}.$$
$V_{i,+}$ corresponds to the character $((a_j,b_j,c_j)_{1\leq j\leq g},d)$ with $a_j=b_j=c_j=0$ for all $j\neq i$, $a_i=1$, $b_i=c_i=0$ and $d=0$. $V_{i,-}$ corresponds to $((a_j,b_j,c_j)_{1\leq j\leq g},d)$ with $a_j=b_j=c_j=0$ for all $j\neq i$, $a_i=b_i=0$, $c_i=-1$ and $d=1$.

We also have
$$V_\mathbb{C}^\vee=\bigoplus\limits_{i=1}^gV_{i,+}^\vee\times V_{i,-}^\vee.$$
$V_{i,+}^\vee$ corresponds to $((a_j,b_j,c_j)_{1\leq j\leq g},d)$ with $a_j=b_j=c_j=0$ for all $j\neq i$, $a_i=b_i=0$, $c_i=-1$ and $d=0$. $V_{i,-}$ corresponds to $((a_j,b_j,c_j)_{1\leq j\leq g},d)$ with $a_j=b_j=c_j=0$ for all $j\neq i$, $a_i=1$, $b_i=c_i=0$ and $d=-1$.

\begin{prop}
The irreducible sub-$G_\mathbb{C}$-representations of $\Lambda^p((V_\mathbb{C}^\vee)^{\oplus r})$ are those $V_{((a_i,b_i,c_i),d)}$ such that :
$$p=-2d-\sum\limits_{i=1}^g(a_i +b_i+c_i),$$
$$-r\leq c_i\leq b_i\leq a_i\leq r,$$
$$3gr+\sum\limits_{i=1}^g(c_i^-+b_i^-+a_i^-)\geq -d\geq \sum\limits_{i=1}^g(c_i^++b_i^++a_i^+).$$
The two last lines have to be true for every $1\leq i\leq g$. We use the notations $x_+=max(x,0)$ et $x_-=min(x,0)$.
\end{prop}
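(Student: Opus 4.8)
The plan is to exploit the isomorphism $G_{\mathbb C}\simeq GL_{3,\mathbb C}^{g}\times\mathbb G_{m,\mathbb C}$, under which (from the weight data recorded above) $V_{i,+}^{\vee}$ is the dual standard representation of the $i$-th factor $GL_{3}$ with $\mathbb G_{m}$ acting by $d=0$, and $V_{i,-}^{\vee}$ is the standard representation of the $i$-th factor with $d=-1$. Writing $V_{\mathbb C}^{\vee}=\bigoplus_{i=1}^{g}(V_{i,+}^{\vee}\oplus V_{i,-}^{\vee})$ and expanding an exterior power of a direct sum, one gets
$$\Lambda^{p}\bigl((V_{\mathbb C}^{\vee})^{\oplus r}\bigr)=\bigoplus\ \bigotimes_{i=1}^{g}\Bigl(\Lambda^{k_{i}}\bigl((V_{i,+}^{\vee})^{\oplus r}\bigr)\otimes\Lambda^{l_{i}}\bigl((V_{i,-}^{\vee})^{\oplus r}\bigr)\Bigr),$$
the sum over $(k_{i},l_{i})_{i}$ with $\sum_{i}(k_{i}+l_{i})=p$, the $i$-th tensor factor being a representation of the $i$-th $GL_{3}$ carrying $\mathbb G_{m}$-weight $-l_{i}$. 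I would then apply the dual Cauchy formula $\Lambda^{k}(A\otimes B)=\bigoplus_{\lambda}\mathbb S_{\lambda}(A)\otimes\mathbb S_{\lambda^{t}}(B)$ ($\mathbb S$ the Schur functors, $\lambda^{t}$ the transposed partition) with $A$ the (dual) standard representation and $B=\mathbb C^{r}$: this yields $\Lambda^{k}((V_{i,+}^{\vee})^{\oplus r})=\bigoplus_{\nu}\mathbb S_{\nu}(U^{\vee})\otimes(\ast)$ and $\Lambda^{l}((V_{i,-}^{\vee})^{\oplus r})=\bigoplus_{\mu}\mathbb S_{\mu}(U)\otimes(\ast)$, where $U$ is the standard representation of the $i$-th $GL_{3}$, $\nu,\mu$ run over partitions in the box $(r^{3})$ with $|\nu|=k$, $|\mu|=l$, and $(\ast)$ are $GL_{r}$-multiplicity spaces, trivial as $G_{\mathbb C}$-modules. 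The outcome of this first stage is: $V_{((a_{i},b_{i},c_{i}),d)}$ occurs in $\Lambda^{p}((V_{\mathbb C}^{\vee})^{\oplus r})$ if and only if there exist partitions $\mu^{(i)},\nu^{(i)}\subseteq(r^{3})$ with $\sum_{i}(|\mu^{(i)}|+|\nu^{(i)}|)=p$, $d=-\sum_{i}|\mu^{(i)}|$, and such that for each $i$ the irreducible $GL_{3}$-module of highest weight $(a_{i},b_{i},c_{i})$ occurs in $\mathbb S_{\nu^{(i)}}(U^{\vee})\otimes\mathbb S_{\mu^{(i)}}(U)$.

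For the necessity of the three conditions the cleanest argument is via highest weight vectors, and it does not even need the Cauchy decomposition. A highest weight vector of weight $((a_{i},b_{i},c_{i})_{i},d)$ is a combination of wedge monomials, each a product of $p$ of the weight-basis vectors $x^{(s)}_{i,j}$ of $V_{i,+}^{\vee}$ ($\mathbb G_{m}$-weight $0$, $i$-th $GL_{3}$-weight $-e_{j}$) and $y^{(s)}_{i,j}$ of $V_{i,-}^{\vee}$ ($\mathbb G_{m}$-weight $-1$, $i$-th $GL_{3}$-weight $e_{j}$), $1\le s\le r$. Since a basis vector cannot repeat in a wedge, a given monomial contains each $x^{(\cdot)}_{i,j}$ (resp. $y^{(\cdot)}_{i,j}$) at most $r$ times; calling these multiplicities $n^{x}_{i,j},n^{y}_{i,j}\in\{0,\dots,r\}$, comparison of weights inside one monomial gives $(a_{i},b_{i},c_{i})_{j}=n^{y}_{i,j}-n^{x}_{i,j}$, $-d=\sum_{i,j}n^{y}_{i,j}$ and $p=\sum_{i,j}(n^{x}_{i,j}+n^{y}_{i,j})$. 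Eliminating the $n$'s gives condition 1. From $n^{y}_{i,j}\ge0$ and $n^{x}_{i,j}\le r$ one gets $c_{i}\ge-r$ and $a_{i}\le r$, which with dominance is condition 2. Finally $n^{y}_{i,j}=(a_{i},b_{i},c_{i})_{j}+n^{x}_{i,j}\ge((a_{i},b_{i},c_{i})_{j})_{+}$ and, using $n^{x}_{i,j}\le r$ together with $n^{y}_{i,j}\le r$, $r-n^{y}_{i,j}\ge\max(0,n^{x}_{i,j}-n^{y}_{i,j})=-((a_{i},b_{i},c_{i})_{j})_{-}$; summing over $j$ and then $i$ gives the two inequalities of condition 3.

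For sufficiency I would return to the criterion from the first stage. It is enough, given $(a,b,c)$ with $-r\le c\le b\le a\le r$ and an integer $l$ in the interval $[(a)_{+}+(b)_{+}+(c)_{+},\,3r+(a)_{-}+(b)_{-}+(c)_{-}]$, to produce partitions $\mu,\nu\subseteq(r^{3})$ with $|\mu|=l$ such that the $GL_{3}$-module of highest weight $(a,b,c)$ occurs in $\mathbb S_{\nu}(U^{\vee})\otimes\mathbb S_{\mu}(U)$: one then distributes $-d=\sum_{i}|\mu^{(i)}|$ over the slots with each $|\mu^{(i)}|$ in its interval — possible since, by condition 3, $-d$ lies between the sum of the lower endpoints and the sum of the upper endpoints of these intervals — takes the corresponding $\nu^{(i)}$, and checks that the resulting external tensor product of slot-wise constituents is a subrepresentation of $\Lambda^{p}((V_{\mathbb C}^{\vee})^{\oplus r})$ containing $V_{((a_{i},b_{i},c_{i})_{i},d)}$, the exponent $p$ falling out as in condition 1. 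To produce $\mu,\nu$: since $\mathbb S_{\nu}(U^{\vee})$ has highest weight $(-\nu_{3},-\nu_{2},-\nu_{1})$, the tensor product contains its Cartan component, the irreducible of highest weight $(\mu_{1}-\nu_{3},\mu_{2}-\nu_{2},\mu_{3}-\nu_{1})$, which is a dominant weight because $(\mu_{1},\mu_{2},\mu_{3})$ and $(-\nu_{3},-\nu_{2},-\nu_{1})$ are both weakly decreasing. So it is enough to solve $\mu_{1}-\nu_{3}=a$, $\mu_{2}-\nu_{2}=b$, $\mu_{3}-\nu_{1}=c$ in partitions $\mu,\nu\subseteq(r^{3})$ with $|\mu|=l$; should the Cartan component not suffice at some boundary case, one falls back on the Littlewood--Richardson rule, which after passing to the complementary partition $\widehat\nu=(r-\nu_{3},r-\nu_{2},r-\nu_{1})$ reads $c^{(a+r,b+r,c+r)}_{\widehat\nu,\mu}>0$ and is strictly more permissive.

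The hard part is this last point: showing that, as $(\mu,\nu)$ ranges over the partitions in $(r^{3})$ subject to $\mu_{1}-\nu_{3}=a,\ \mu_{2}-\nu_{2}=b,\ \mu_{3}-\nu_{1}=c$, the quantity $|\mu|$ realizes every integer of the interval, not only its endpoints. The feasible pairs are the lattice points of an integral polytope cut out by the box constraints $0\le\mu_{j},\nu_{j}\le r$ and the interlacing inequalities $\mu_{1}\ge\mu_{2}\ge\mu_{3}$, $\nu_{1}\ge\nu_{2}\ge\nu_{3}$ — constraints that pin $|\mu|$ from below through $\mu$ and from above through $\nu$ — and one must check that the linear functional $|\mu|$ has connected image on them; this is elementary (for instance by exhibiting, for each admissible $l$, an explicit such pair and moving one coordinate at a time) but is where the bulk of the verification lies. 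The weight bookkeeping in the necessity step, though routine, likewise needs some care.
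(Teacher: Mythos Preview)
Your argument is correct. The necessity half is essentially the paper's own proof: both read off the conditions from a single wedge monomial in the weight basis, your $n^{x}_{i,j},n^{y}_{i,j}$ being exactly the paper's $\alpha_{i,\pm},\beta_{i,\pm},\gamma_{i,\pm}$.

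For sufficiency you take a genuinely different route. The paper avoids Schur functors and the Cauchy formula entirely: it writes down, by hand, a subrepresentation
\[
\mathbb{W}=\Bigl(\bigotimes_{i=1}^{g}W_{a_i,b_i,c_i}^{+}\otimes W_{a_i,b_i,c_i}^{-}\Bigr)\otimes V_{0,\dots,0,-1}^{\otimes(-d-S_{+})},
\]
where each $W^{\pm}$ is an explicit product of $\Lambda^{1},\Lambda^{2},\Lambda^{3}$ of a single $V_{i,\pm}^{\vee}$, engineered so that the Cartan component of $\mathbb{W}$ has highest weight $((a_i,b_i,c_i),d)$. The only nontrivial ingredient is that the pairing $J$ realizes $V_{0,\dots,0,-1}$ inside $V_{i,+}^{\vee}\otimes V_{i,-}^{\vee}$, which lets one adjust $d$ freely within the range of condition~3 without touching the $GL_{3}$-weights. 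This replaces your distribution-and-polytope step by a single algebraic observation.

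What each approach buys: the paper's construction is shorter and more elementary, and it produces an explicit highest-weight vector without invoking Littlewood--Richardson or the connectedness of a lattice polytope. Your approach, on the other hand, gives strictly more: the dual Cauchy decomposition actually yields the full isotypic decomposition of $\Lambda^{p}((V_{\mathbb C}^{\vee})^{\oplus r})$ with multiplicities, not merely the list of constituents. Incidentally, your worry about a Littlewood--Richardson fallback is unnecessary: the Cartan component already realises the extremal values $|\mu|=a_{+}+b_{+}+c_{+}$ and $|\mu|=3r+a_{-}+b_{-}+c_{-}$ via $(\mu,\nu)=((a_{+},b_{+},c_{+}),(-c_{-},-b_{-},-a_{-}))$ and its box-complement, and moving one coordinate at a time interpolates between them, so the polytope check is as routine as you suspected.
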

\begin{proof}
The previous decomposition of $V_\mathbb{C}^\vee$ gives a $\mathbb{Q}$-basis $\mathcal{B}$ of $g(3+3)$ vectors which are eigenvectors for the $T_m$-action; thus it gives a $\mathbb{Q}$-basis $\mathcal{B}_p$ of $\Lambda^p((V_\mathbb{C}^\vee)^{\oplus r})$ composed of exterior products of vectors taken from $\mathcal{B}$ which in turn is made of eigenvectors for the $T_m$-action. 

Giving a vector of this basis $\mathcal{B}_p$ is equivalent to giving $p$ vectors of $\mathcal{B}$. For such a vector, let us denote $\alpha_{i,+}$ (resp. $\beta_{i,+}$, resp. $\gamma_{i,+}$) the number of vectors of $\mathcal{B}$ chosen as the first (resp. the second, resp. the third) vector of the canonical basis of $V_{(\cdots (0, 0, 0), (0, 0, -1), (0, 0, 0), \cdots, 0)}$ and let us denote $\alpha_{i,-}$ (resp. $\beta_{i,-}$, resp. $\gamma_{i,-}$) the number of vectors of $\mathcal{B}$ chosen as the first (resp. the second, resp. the third) vector of the canonical basis of $V_{(\cdots (0, 0, 0), (1, 0, 0), (0, 0, 0), \cdots, -1)}$. Then we have $\alpha_{i,+/-},\beta_{i,+/-},\gamma_{i,+/-}\in[0,r]$. 

$G_\mathbb{C}$ acts on such a vector via the character $((a_i,b_i,c_i),d)$ such that 
$$a_i=\alpha_{i,-}-\alpha_{i,+}\;,\;b_i=\beta_{i,-}-\beta_{i,+}\;,\;c_i=\gamma_{i,-}-\gamma_{i,+}\;,$$ 
$$d=-\sum\limits_{i=1}^g(\alpha_{i,-}+\beta_{i,-}+\gamma_{i,-}).$$

Now, giving an irreducible sub-representation of $\Lambda^p(V_\mathbb{C}^\vee)^{\oplus r}$ is the same thing as giving a maximal weight in this space, i.e. giving an eigenvector whose weight is maximal .

Therefore we find the inequalities written in the statement of the proposition, $a_i\geq b_i\geq c_i$ coming from the choice of the Borel subgroup.\newline

For the reverse, let us consider 
$$W_{a_i,b_i,c_i}^+=(\Lambda^3 V_{i,-}^\vee)^{\otimes c_{+,i}}\otimes(\Lambda^2 V_{i,-}^\vee)^{\otimes (b_{i,+}-c_{i,+})}\otimes (\Lambda V_{i,-}^\vee)^{\otimes (a_{i,+}-b_{i,+})}$$
$$W_{a_i,b_i,c_i}^-=(\Lambda^3 V_{i,+}^\vee)^{\otimes (-a_{i,-})}\otimes(\Lambda^2 V_{i,+}^\vee)^{\otimes (a_{i,-}-b_{i,-})} \otimes(\Lambda V_{i,+})^{\otimes (b_{i,-}-c_{i,-})}$$
$$\mathbb{W}=(\bigotimes\limits_{i=1}^g W_{a_i,b_i,c_i}^+\otimes W_{a_i,b_i,c_i}^-)\otimes V_{0,\cdots,0,-1}^{\otimes (-d-S_+)}$$
where $S_+=\sum\limits_{i=1}^g a_{i,+}+b_{i,+}+c_{i,+}$. This is allowed because $-d\geq S_+$ fby hypothesis.

We easily check that $\mathbb{W}$ is a representation of highest weight $((a_i,b_i,c_i),d)$. To check that it is a sub-representation of $\Lambda^p((V_\mathbb{C}^\vee)^{\oplus r})$ it suffices to show that $V_{0,\cdots ,0,-1}$ is a sub-representation of $V_{i,+}^\vee\otimes V_{i,-}^\vee$.

The pairing $J$ on $V_{i,+}\otimes V_{i,-}$ induces a surjective morphism of $G_\mathbb{C}$-representations $V_{i,+}\otimes V_{i,-}\rightarrow V_{0,\cdots,0,1}$, which permits to see by duality $V_{0,\cdots,0,-1}$ as a sub-representation of $V_{i,+}^\vee\otimes V_{i,-}^\vee$.
\end{proof}

We let $e_\lambda$ be an idempotent of $\Lambda^p(V_\mathbb{C}^{\vee \oplus r})$ corresponding to $V_\lambda$. We denote by the same symbol $e_\lambda$ the idempotent of $M_{gm}(A^r)$. We need to compute boundary cohomology of $M_{gm}(A^r)^{e_\lambda}$.

\begin{prop}
Let $Q$ be a field containing $E_{gal}$ and $\lambda=((a_i,b_i,c_i),d)$ a character associated to an irreducible sub-representation of $\Lambda^p(V_Q^{\vee\oplus r})$. Then there exists an canonical isomorphism 
$$(\partial H^n(A^r(\mathbb{C}),\mathbb{Q})\otimes_\mathbb{Q}Q)^{e_\lambda}\simeq\partial H^{n-p}(S(\mathbb{C}),\mu(V_\lambda)).$$ 
\end{prop}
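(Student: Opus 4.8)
The plan is to compare the boundary cohomology of the Kuga–Sato family $A^r$ with that of $S$ by exploiting the fibration $\pi\colon A^r\to S$ together with the Leray spectral sequence, and then to isolate the $e_\lambda$-part. First I would recall that $\pi$ is a smooth proper morphism whose fibers are abelian varieties of dimension $3gr$, so the (derived) pushforward $R\pi_*\mathbb{Q}$ decomposes as $\bigoplus_q R^q\pi_*\mathbb{Q}[-q]$, and each $R^q\pi_*\mathbb{Q}=\Lambda^q R^1\pi_*\mathbb{Q}$. Since $R^1\pi_*\mathbb{Q}$ is the variation of Hodge structure $\mu(V^{\vee\oplus r})$ (this is exactly the normalization in Theorem~3.3, whose Hodge realization recovers $\mu$ and sends $V^\vee$ to $h^1(A/S)$), we get $R^q\pi_*\mathbb{Q}\simeq \mu(\Lambda^q(V^{\vee\oplus r}))$ as VHS on $S$. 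The idempotent $e_\lambda\in\operatorname{End}(\Lambda^p(V_Q^{\vee\oplus r}))$ then acts on $R^p\pi_*\mathbb{Q}\otimes Q$ cutting out the summand $\mu(V_\lambda)$, and kills all $R^q\pi_*\mathbb{Q}$ for $q\neq p$ — here one should note that $e_\lambda$, being a projector onto an isotypic component of $\Lambda^p$, acts as zero on $\Lambda^q$ for $q\neq p$ because the grading by exterior power degree is preserved.

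The second step is to transport this from ordinary cohomology of $A^r$ to \emph{boundary} cohomology. I would use that $\partial H^*$ is a cohomological functor fitting into the long exact sequence attached to $\partial M_{gm}(A^r)\to M_{gm}(A^r)\to M_{gm}^c(A^r)$, and that the whole package is functorial and compatible with the $CHM^s(S)$-module structure and with the idempotents coming from Theorem~3.3. Concretely, the cleanest route is Pink's / Wildeshaus's description: the boundary cohomology $\partial H^n(A^r(\mathbb{C}),\mathbb{Q})$ is computed from the strata of a toroidal compactification, and the relevant mechanism is that $\pi\colon A^r\to S$ extends to the compactifications compatibly with the stratifications, so that one has a degenerating Leray spectral sequence for $\partial H$ as well. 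One obtains
$$
\partial H^n(A^r(\mathbb{C}),\mathbb{Q})\otimes_{\mathbb{Q}}Q\;\simeq\;\bigoplus_{q}\partial H^{n-q}\bigl(S(\mathbb{C}),\mu(\Lambda^q(V_Q^{\vee\oplus r}))\bigr),
$$
the degeneration being forced by the strict compatibility of the decomposition of $R\pi_*\mathbb{Q}$ with weights (the fibers are abelian varieties, so $R^q\pi_*\mathbb{Q}$ is pure of weight $q$ and the spectral sequence degenerates at $E_2$ for weight reasons, on the boundary just as on $S$ itself). Applying $e_\lambda$ to both sides then annihilates every term with $q\neq p$ and replaces the $q=p$ term by its $V_\lambda$-isotypic summand, yielding the asserted isomorphism
$$
\bigl(\partial H^n(A^r(\mathbb{C}),\mathbb{Q})\otimes_{\mathbb{Q}}Q\bigr)^{e_\lambda}\;\simeq\;\partial H^{n-p}\bigl(S(\mathbb{C}),\mu(V_\lambda)\bigr).
$$
Canonicity follows because every arrow used (the decomposition of $R\pi_*$, the edge maps of Leray, the action of $e_\lambda$) is canonical once the normalization of $\tilde\mu$ in Theorem~3.3 is fixed.

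The main obstacle, which deserves the most care, is justifying the degeneration and the Leray decomposition \emph{at the level of boundary cohomology} rather than ordinary cohomology: $\partial H^*$ is not the cohomology of a space but a mapping-cone construction, so one cannot naively invoke the decomposition theorem. I would handle this by working on a fixed toroidal compactification $\widetilde{A^r}$ of $A^r$ lying over a toroidal compactification $\widetilde S$ of $S$ (these exist and are compatible by Pink, as recalled in Proposition~2.10 and its proof), writing $\partial H^n(A^r,\mathbb{Q})=H^{n}(\widetilde S,\, i^*Rj_*\text{-type complex})$ in the Borel–Serre/Pink model, and pushing forward along the proper extension $\widetilde\pi\colon\widetilde{A^r}\to\widetilde S$; the fibers of $\widetilde\pi$ over the boundary are themselves toroidal compactifications of abelian varieties, so $R\widetilde\pi_*$ still splits into Tate-twisted shifts of lower-dimensional pieces, and purity of each graded piece (fibers being — after the devissage used in Proposition~3.2 — built from abelian varieties and tori) forces $E_2$-degeneration of the resulting spectral sequence. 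Alternatively, and perhaps more in the spirit of this paper, one can phrase everything motivically: $M_{gm}(A^r)$ decomposes over $S$ via $h(A^r/S)=\bigoplus_q h^q(A^r/S)$ in $CHM^s(S)_Q$ with $h^q(A^r/S)=\Lambda^q h^1(A^r/S)=\tilde\mu(\Lambda^q(V^{\vee\oplus r}))$, apply the functor $(\partial M_{gm},M_{gm},M_{gm}^c)$ of \cite{WilBF} which is $CHM^s(S)_Q$-linear, deduce $\partial M_{gm}(A^r)^{e_\lambda}\simeq \partial M_{gm}(S,\mu(V_\lambda))[-p](\text{twist})$, and finally take the Hodge (or $\ell$-adic) realization and the cohomology functor $H^n$ to land on the stated formula — this is the cleaner argument and avoids re-proving a decomposition theorem for boundary cohomology.
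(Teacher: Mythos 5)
Your proposal follows essentially the same route as the paper: a Leray-type spectral sequence for boundary cohomology of $\pi\colon A^r\to S$ with $E_2$-term $\partial H^m(S(\mathbb{C}),\mathcal{H}^q(\mathcal{A}^r/S))$, equivariant under relative Chow correspondences (equivalently, under the idempotents produced by Theorem~3.3), followed by application of $e_\lambda$, which kills every row $q\neq p$ and cuts $\mu(V_\lambda)$ out of the row $q=p$. The paper's proof is exactly this, with the compatibility of the spectral sequence with $CH^{3rg}(\mathcal{A}^r\times_S\mathcal{A}^r)\otimes_{\mathbb{Z}}Q=\End_{CHM^s(S)_Q}(h(\mathcal{A}^r/S))$ as the only nontrivial input.

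Where you diverge is in the order of operations, and this matters. You first assert full $E_2$-degeneration of the boundary spectral sequence, obtaining a direct sum over all $q$, and only then apply $e_\lambda$; you correctly identify this degeneration as the delicate point and try to justify it by purity of $R^q\pi_*\mathbb{Q}$. That justification does not go through as stated: Deligne's degeneration criterion via weights applies to cohomology of the base with coefficients in a pure object, but $\partial H^m(S,-)$ of a pure variation is genuinely mixed (indeed the whole point of Sections~3--4 is to compute which mixed weights occur), so purity of the coefficient sheaf does not by itself force the boundary spectral sequence to degenerate. The paper sidesteps the issue entirely: apply the idempotent $e_\lambda$ to the spectral sequence \emph{before} asking about degeneration. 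Since $e_\lambda$ annihilates $\mathcal{H}^q(\mathcal{A}^r/S)$ for $q\neq p$, the $e_\lambda$-part of the $E_2$-page is concentrated in the single row $q=p$, all differentials vanish for trivial reasons, and the edge map is the desired isomorphism. Your toroidal-compactification detour and your alternative motivic argument are therefore unnecessary for this proposition (though the motivic reformulation is sound and closer in spirit to how the functor of \cite{WilBF} is used elsewhere in the paper). With the order of the two steps reversed, your argument is the paper's argument.
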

\begin{proof}
We denote by $e_\lambda$ the idempotent defining $V_\lambda$ as a direct factor of $\Lambda^*(V_Q^{\vee\oplus r})$. Applying the functor $\mu$ we get an idempotent acting on the relative cohomology $\mathcal{H}^q(\mathcal{A}^r/S)$ for every $q$. Moreover, we have 
$$\mu(\Lambda^p(V_Q^{\vee\oplus r}))=\mathcal{H}^p(\mathcal{A}^r/S).$$
The image of this idempotent is $\mu(V_\lambda)$ if $q=p$ and $0$ otherwise. We also have the spectral sequence of Hodge structures
$$E_2^{m,n}=\partial H^m(S(\mathbb{C}),\mathcal{H}^n(\mathcal{A}^r/S))\Rightarrow \partial H^{m+n}(\mathcal{A}^r(\mathbb{C}),\mathbb{Q})$$
which is compatible with the action of the Chow group 
$$CH^{3rg}(\mathcal{A}^r\times_S \mathcal{A}^r)\otimes_\mathbb{Z} Q=End_{CHM^s(S)_Q}(h(\mathcal{A}^r/S)).$$
Taking the part fixed by $e_\lambda$, we get
$$\partial H^m(S(\mathbb{C}),\mathcal{H}^n(\mathcal{A}^r/S)^{e_\lambda})\Rightarrow (\partial H^{m+n}(\mathcal{A}^r(\mathbb{C}),\mathbb{Q})\otimes Q)^{e_\lambda}$$
which gives 
$$\partial H^m(S(\mathbb{C}),\mu(V_\lambda))\simeq(\partial H^{m+p}(\mathcal{A}^r(\mathbb{C}),\mathbb{Q})\otimes Q)^{e_\lambda}.$$
\end{proof}

Thus we are reduced to calculate the weights appearing in $R^ki^*j_*\mu(V_\lambda)$, where $j:S\rightarrow \overline{S}$ is the inclusion of our Picard variety into its Baily-Borel compactification and $i:\partial S\rightarrow \overline{S}$ is the inclusion of the boundary.

For that, we are going to used the main result of \cite{BW}. Following the notations of this article, we denote by $\overline{H_C}$ the image in $Q/W$ of $\text{Cent}_{Q(\mathbb{Q})}(X_1)\cap P(\mathbb{A}_f).K$.

\begin{theo}
For every $\mathbb{V}^.\in D^b(Rep_G)$ we have a canonical and functorial isomorphism
$$\mathcal{H}^n i^*j_*\circ\mu_S(\mathbb{V}^.)\simeq\bigoplus_{p+q=n}\mu_{pt}\circ H^p(\overline{H_C},H^q(W,Res_Q^G\mathbb{V}^.))$$
of variations of Hodge structure, where $pt$ is a connected component of the boundary of the Baily-Borel compactification of $S$, $i:pt\rightarrow S^*$ the corresponding inclusion and $j:S\rightarrow S^*$ the inclusion of the Picard variety in its Baily-Borel compactification. 
\end{theo}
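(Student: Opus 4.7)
The plan is to invoke the main theorem of \cite{BW} directly; essentially all the preparatory work has been done in the preceding propositions, and what remains is to verify the hypotheses and identify the ingredients on the right-hand side. The remark following definition 2.1 records that our Shimura datum $(G, X)$ was arranged precisely so as to satisfy condition (+) of \cite{BW}---this is indeed why we replaced $\tilde{G}$ by $G$ in the first place---so the hypotheses of \cite{BW} apply.

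First I would reduce to a single boundary component: by proposition 2.13 the boundary of the Baily-Borel compactification $S^*$ is a finite union of points, so it suffices to treat one connected component $pt$ at a time. Via the dictionary of \cite{Pin}, such a $pt$ corresponds to a conjugacy class of rational parabolic $Q \subset G$, with unipotent radical $W$ and normal subgroup $P = W \cdot T_1$ as in proposition 2.8; the group $\overline{H_C}$ is the image in $(Q/W)(\mathbb{Q})$ of $\text{Cent}_{Q(\mathbb{Q})}(X_1) \cap P(\mathbb{A}_f) \cdot K$ as recalled just before the statement. Note that $P/W$ is a torus (cf.\ the proof of proposition 2.12), so $\overline{H_C}$ is simply an arithmetic subgroup of this torus.

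With these identifications in place, the isomorphism is the direct output of the main theorem of \cite{BW}. Conceptually, the stalk of $j_* \mu_S(\mathbb{V}^\bullet)$ at $pt$ is built in two stages: first, derived $W$-invariants of $Res_Q^G \mathbb{V}^\bullet$ yield a graded object in $\Rep_{Q/W}$, which $\mu_{pt}$ then realises as a Hodge structure on $pt$; second, one takes cohomology of the arithmetic group $\overline{H_C}$, which encodes the topology of the link of the cusp. Summing over $p+q=n$ gives the $n$-th cohomology of the full composition.

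The potential obstacle is the degeneration that turns the associated Hochschild--Serre type spectral sequence into a genuine direct sum, but this degeneration is already built into the cited theorem of \cite{BW} and ultimately rests on condition (+), which has been arranged at the level of the Shimura datum. Consequently, no further work is required beyond the bookkeeping of identifying $Q$, $W$ and $\overline{H_C}$ in our setting, which has already been carried out above.
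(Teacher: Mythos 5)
Your proposal is correct and coincides with what the paper does: the theorem is stated as a direct quotation of the main result of \cite{BW} (the paper in fact gives no proof at all, having just announced ``we are going to use the main result of \cite{BW}''), and your verification of condition (+) via the remark on replacing $\tilde{G}$ by $G$, together with the identification of $Q$, $W$, $P$ and $\overline{H_C}$, is exactly the bookkeeping the paper relies on implicitly.
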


In order to calculate the cohomology of the unipotent radical, we use Kostant's formula (see \cite{War}). 

\begin{theo}
Let $G$ be a reductive group over $\mathbb{C}$, $B$ a Borel sub-group, $W$ its unipotent radical, $\rho$ the half sum of positive roots, $\mathcal{R}$ the Weyl group and $V_\lambda$ the irreducible $G$-representation of highest weight $\lambda$. For all $\sigma\in\mathcal{R}$, we note $l(\sigma)$ the length of $\sigma$.

Then we have the following equality of $B/W$-representations
$$H^k(W,V_{\lambda |B})=\bigoplus\limits_{l(\sigma)=k}V_{\sigma(\lambda+\rho)-\rho,B/W}$$
where $V_{\alpha,B/W}$ is the irreductible representation of $B/W$ of highest weight $\alpha$.
\end{theo}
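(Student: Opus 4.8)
The statement is Kostant's theorem on the cohomology of the nilradical, so the plan is to reproduce the classical proof. First I would replace group cohomology by Lie algebra cohomology: in characteristic zero $H^k(W,V_{\lambda|B})=H^k(\mathfrak{n},V_\lambda)$ with $\mathfrak{n}:=\operatorname{Lie}W$, and the latter is computed by the Chevalley--Eilenberg complex $C^\bullet=\Lambda^\bullet\mathfrak{n}^\vee\otimes V_\lambda$ with its standard differential. Since $W$ is normal in $B$ and $B/W$ is the maximal torus $T$, every term of $C^\bullet$ and the differential are $T$-equivariant, so each $H^k$ is a $T$-module; it therefore suffices to determine it as such, and the irreducibles $V_{\alpha,B/W}$ are just the characters $\alpha$ of $T$.

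Next I would read off the shape of the answer from an Euler-characteristic computation in the representation ring $R(T)$. Since $\mathfrak{n}^\vee$ has weights $-\alpha$ for $\alpha\in\Psi^+$, one has $\sum_k(-1)^k\operatorname{ch}\Lambda^k\mathfrak{n}^\vee=\prod_{\alpha\in\Psi^+}(1-e^{-\alpha})$, while the Weyl character formula, written as $\operatorname{ch}V_\lambda\cdot e^\rho\prod_{\alpha\in\Psi^+}(1-e^{-\alpha})=\sum_{\sigma\in\mathcal{R}}(-1)^{l(\sigma)}e^{\sigma(\lambda+\rho)}$, yields
$$\sum_k(-1)^k\operatorname{ch}H^k(\mathfrak{n},V_\lambda)=\sum_{\sigma\in\mathcal{R}}(-1)^{l(\sigma)}e^{\sigma(\lambda+\rho)-\rho}.$$
As $\lambda$ is dominant, $\lambda+\rho$ is regular, so the weights $\sigma(\lambda+\rho)-\rho$ are pairwise distinct over $\sigma\in\mathcal{R}$; hence there is no cancellation on the right, and each character $e^{\sigma(\lambda+\rho)-\rho}$ occurs exactly once, with sign $(-1)^{l(\sigma)}$.

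The substantive step is then to place each such character in the correct cohomological degree — an Euler characteristic by itself cannot rule out a character occurring in two degrees in a canceling pair. For this I would use Kostant's harmonic-projection argument: equip $V_\lambda$ and the relevant Lie algebras with a suitable Hermitian inner product, form the $\mathfrak{n}$-Laplacian $\Box=dd^*+d^*d$ on $C^\bullet$, and show via a Casimir computation that $\Box$ acts on the $\nu$-weight space of $C^k$ by the scalar $\tfrac12\big(\|\lambda+\rho\|^2-\|\nu+\rho\|^2\big)$. Harmonic cochains compute $H^\bullet$, so $H^k$ is the span of those weights $\nu$ of $C^k$ with $\|\nu+\rho\|=\|\lambda+\rho\|$. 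The key lemma to prove is that every weight $\nu$ of $C^k$ satisfies $\|\nu+\rho\|\le\|\lambda+\rho\|$, with equality precisely when $\nu=\sigma(\lambda+\rho)-\rho$ for a unique $\sigma\in\mathcal{R}$; and, writing $\Phi_\sigma=\{\beta\in\Psi^+:\sigma^{-1}\beta<0\}$, one has $\sigma(\lambda+\rho)-\rho=\sigma\lambda-\sum_{\beta\in\Phi_\sigma}\beta$ with $|\Phi_\sigma|=l(\sigma)$, which exhibits this weight in $C^{l(\sigma)}$ and in no other $C^k$, with the relevant weight space of $C^{l(\sigma)}$ one-dimensional. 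Combined with the Euler-characteristic identity, this forces $H^k(\mathfrak{n},V_\lambda)=\bigoplus_{l(\sigma)=k}\mathbb{C}_{\sigma(\lambda+\rho)-\rho}$, which is the assertion.

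I expect the main obstacle to be exactly that last lemma — the norm inequality $\|\nu+\rho\|\le\|\lambda+\rho\|$, the equality analysis, and the one-dimensionality of the extremal weight spaces form the technical heart of Kostant's proof; everything else (the Weyl-denominator identity, regularity of $\lambda+\rho$, the comparison of $W$- and $\mathfrak{n}$-cohomology) is routine. One could instead try to deduce the result from the BGG resolution of $V_\lambda$, but that resolution is itself normally obtained from Kostant's theorem, so this is not really a shortcut. In the situation where this proposition is applied, $Q=B$ is already a Borel by Proposition 2.3, so no grouping of $T$-weights into Levi-orbits is needed; for a general parabolic the same argument goes through after replacing $\mathcal{R}$ by the minimal-length representatives of $\mathcal{R}_L\backslash\mathcal{R}$ and reading the summands as Levi-irreducibles. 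Since the paper only needs the formula as a black box, one may simply invoke \cite{War}.
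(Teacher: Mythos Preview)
Your sketch of Kostant's classical proof is correct and well organized: the passage to Lie algebra cohomology, the Euler-characteristic identity via the Weyl denominator, and the Laplacian/harmonic-cochain argument pinning each weight to a single degree are exactly the standard ingredients, and your identification of the norm inequality $\|\nu+\rho\|\le\|\lambda+\rho\|$ as the technical crux is accurate.

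That said, the paper does not prove this theorem at all. It is stated as Kostant's formula with a reference to \cite{War} and then used as a black box in the subsequent weight computations; there is no proof environment and no argument given. You anticipated this possibility in your final sentence, and that is precisely what happens. So your proposal goes well beyond the paper's own treatment: what you gain is a self-contained account, at the cost of reproducing a substantial classical argument that the paper is content to cite.
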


Now, we describe our data in the language used in the previous theorems.\newline

The Weyl group is $\mathfrak{S}_3^g$. We recall that the length of a element of an element $\sigma$ of the Weyl group is
$$l(\sigma)=\text{card}\{\alpha\in\Psi^+\,/\,\sigma^{-1}\alpha\notin\Psi^+\}.$$

More precisely, if $(\sigma_i)_{1\leq i\leq g}\in\mathfrak{S}_3^g$, we can see each $\sigma_i$ as an element of the Weyl group associated to the root system of $GL_{3,\mathbb{C}}$ (for the upper triangular matrices and the diagonal torus). Then, 
$$l((\sigma_i)_{1\leq i\leq g})=\sum\limits_{i=1}^gl(\sigma_i).$$

Let $\rho$ be the half-sum of the positive roots. Then $\rho=((\rho_i)_{1\leq i\leq g},0)$ with $\rho_i=(1,0,-1)$. Moreover, 

$$l(\text{id})=0$$
$$l((12))=l((23))=1$$
$$l((123))=l((132))=2$$
$$l((13))=3.$$

For a root $\lambda_i$ of $GL_{3,\mathbb{C}}$ we calculate each of the values of $\sigma_i(\lambda_i+\rho_i)-\rho_i$ :
$$\text{id}(\lambda_i+\rho_i)-\rho_i=(a_i,b_i,c_i)$$
$$(12)(\lambda_i+\rho_i)-\rho_i=(b_i-1,a_i+1,c_i)$$
$$(23)(\lambda_i+\rho_i)-\rho_i=(a_i,c_i-1,b_i+1)$$
$$(123)(\lambda_i+\rho_i)-\rho_i=(c_i-2,a_i+1,b_i+1)$$
$$(132)(\lambda_i+\rho_i)-\rho_i=(b_i-1,c_i-1,a_i+2)$$
$$(13)(\lambda_i+\rho_i)-\rho_i=(c_i-2,b_i,a_i+2).$$

Moreover, $Q/W$ is a torus. As a consequence, its irreducible complex representations are of dimension one.\newline

If $K$ a congruence subgroup of level $N$, then
$$\overline{H_C}=\{(\left(\begin{array}{ccc} z & 0 & 0 \\ 0 & 1 & 0 \\ 0 & 0 & \alpha(z)^{-1} \end{array}\right), 1)\,/\,z\in U_{E,N}\}$$
where $U_{E,N}=\{z\in\mathcal{O}_E\,/\, z=1\mod N\mathcal{O}_E\}$. 

More generally, if $K$ is an open compact neat subgroup of $G(\mathbb{A}_f)$, $\overline{H_C}$ is a finite index subgroup of $\mathcal{O}_E^\times$ without torsion. Hence, 
$$\overline{H_C}\simeq\mathbb{Z}^{g-1}.$$

\begin{prop}
Let $M$ be a one dimensional $\mathbb{C}$-vector space with an action of $\overline{H_C}$.

If $\overline{H_C}$ acts non trivially, 
$$\forall k\geq 0, H^k(\overline{H_C},M)=0.$$

If this action is trivial,
$$H^k(\overline{H_C},M)=M^{\left(\begin{array}{c} g-1 \\ k \end{array}\right)}.$$
\end{prop}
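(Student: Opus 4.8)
The plan is to compute the group cohomology of $\overline{H_C}\simeq\mathbb{Z}^{g-1}$ directly, using the fact that a one-dimensional $\mathbb{C}$-representation of a free abelian group is the same as a homomorphism $\chi:\mathbb{Z}^{g-1}\to\mathbb{C}^\times$. The standard tool is that $H^*(\mathbb{Z}^{g-1},M)$ is computed by the Koszul complex associated to the $g-1$ commuting operators $t_1-1,\dots,t_{g-1}-1$ acting on $M$, where $t_1,\dots,t_{g-1}$ is a basis of $\mathbb{Z}^{g-1}$; equivalently one invokes the Künneth formula together with $H^*(\mathbb{Z},M)$, since $\mathbb{Z}^{g-1}\simeq\mathbb{Z}\times\cdots\times\mathbb{Z}$ and $M$ is one-dimensional so all Tor-terms vanish over the field $\mathbb{C}$.

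First I would recall that for $M$ a module over $\mathbb{Z}$ (i.e.\ with a single automorphism $t$), one has $H^0(\mathbb{Z},M)=M^t=\ker(t-1)$ and $H^1(\mathbb{Z},M)=M_t=\operatorname{coker}(t-1)$, with $H^k=0$ for $k\geq 2$; this comes from the free resolution $0\to\mathbb{Z}[t,t^{-1}]\xrightarrow{t-1}\mathbb{Z}[t,t^{-1}]\to\mathbb{Z}\to 0$. Next I would treat the two cases. If $\overline{H_C}$ acts non-trivially on the one-dimensional space $M$, then some basis element $t_j$ acts by a scalar $\zeta\neq 1$, so $t_j-1$ is invertible on $M$; in the Koszul complex this contractible factor forces $H^k(\overline{H_C},M)=0$ for all $k$ (concretely, by Künneth the $j$-th tensor factor $H^*(\mathbb{Z},M_j)$ already vanishes, or one exhibits a chain contraction). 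If the action is trivial, then each $t_j-1$ acts as zero, the Koszul differentials all vanish, and $H^k(\overline{H_C},M)=\Lambda^k(\mathbb{C}^{g-1})\otimes M$, which has dimension $\binom{g-1}{k}$ over $\mathbb{C}$, i.e.\ $H^k(\overline{H_C},M)=M^{\binom{g-1}{k}}$; equivalently this is $H^k((\mathbb{S}^1)^{g-1};\mathbb{C})\otimes M$ via the identification of $H^*(\mathbb{Z}^{g-1},\mathbb{C})$ with the cohomology of a $(g-1)$-torus.

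The only genuine point requiring care — the part I would flag as the main obstacle — is the non-trivial case: one must ensure that the conclusion $H^k=0$ holds not merely for the distinguished basis element but genuinely records the statement that the character of $\overline{H_C}$ on $M$ is not the trivial character. Since $\overline{H_C}$ is free abelian of finite rank and $\mathbb{C}$ is a field, ``acts non-trivially'' means precisely that the associated character $\chi$ is non-trivial, hence there is a basis in which at least one generator acts by $\zeta\neq 1$; after choosing such a basis the Künneth/Koszul argument applies verbatim. I would also remark that, although the statement is phrased over $\mathbb{C}$, the same computation works with $Q$-coefficients for any field $Q$ of characteristic zero containing the relevant roots, which is all that is needed when combining with Theorem~3.5 and Kostant's formula; but since $Q/W$-representations are one-dimensional and the question is just whether a character is trivial, no subtlety arises. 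The rest is the routine bookkeeping of the Koszul complex, which I would not spell out in full.
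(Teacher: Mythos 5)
Your argument is correct. It differs from the paper's in the way the rank-one computation is propagated to $\mathbb{Z}^{g-1}$: the paper argues by induction on the rank, applying the Hochschild--Serre spectral sequence to a subgroup of $\overline{H_C}$ isomorphic to $\mathbb{Z}^{g-2}$ (chosen to act non-trivially on $M$ in the non-trivial case), so that the spectral sequence degenerates into short exact sequences and the binomial coefficients come out of Pascal's triangle, with the base case $H^*(\mathbb{Z},M)$ computed by hand via cocycles and coboundaries; you instead compute everything at once via the Koszul complex, equivalently the K\"unneth formula for $\mathbb{Z}^{g-1}\simeq\mathbb{Z}\times\cdots\times\mathbb{Z}$ over the field $\mathbb{C}$. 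Both routes rest on the same elementary fact, namely $H^0(\mathbb{Z},M)=\ker(t-1)$ and $H^1(\mathbb{Z},M)=\operatorname{coker}(t-1)$, and on the observation that a non-trivial character of a free abelian group must be non-trivial on some basis element (for that last point you do not even need to change basis: if every basis generator acted trivially the character would be trivial). Your version is slightly cleaner in that it sidesteps the paper's need to select the rank-$(g-2)$ subgroup compatibly with the non-triviality of the action, and it makes the identification $H^k(\mathbb{Z}^{g-1},\mathbb{C})\simeq\Lambda^k(\mathbb{C}^{g-1})$ explicit; the paper's inductive version is more self-contained in that it never invokes K\"unneth or Koszul machinery.
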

\begin{proof}
When the action is trivial, we apply Hochschild-Serre spectral sequence to a subgroup of $\overline{H_C}$ isomorphic to $\mathbb{Z}^{g-2}$ to get an exact sequence
$$0\rightarrow H^0(\mathbb{Z},H^q(\mathbb{Z}^{g-2},M))\rightarrow H^q(\mathbb{Z}^{g-1},M)\rightarrow H^1(\mathbb{Z},H^{q-1}(\mathbb{Z}^{g-2},M))\rightarrow 0 .$$
Having in mind Pascal's triangle relation, we are reduced to show the case where $g-1=1$ which is trivial.\newline

When the action is not trivial, we consider the same spectral sequence applied to a subgroup of $\overline{H_C}$ isomorphic to $\mathbb{Z}^{g-2}$ which acts non trivially on $M$. Again, it suffices to treat the case $g-1=1$. 

On the one hand, as $M$ has dimension 1 and the action of $\mathbb{Z}$ on it is non trivial, 
$$H^0(\mathbb{Z},M)=0.$$ 

On the other hand, a cocycle of $Z^1(\mathbb{Z},M)$ is determined by the image of $1\in\mathbb{Z}$. Thus we have $Z^1(\mathbb{Z},M)\simeq M$. With this identification, the coboundaries form the following set
$$B^1(\mathbb{Z},M)=\{1.a-a,a\in M\}.$$ 

As the action of $\mathbb{Z}$ on $M$ is non trivial $B^1(\mathbb{Z},M)=M$. Hence
$$H^1(\mathbb{Z},M)=0.$$
\end{proof}

Let $V_{((a_,b_i,c_i)_{1\leq i\leq g},d)}$ be an irreducible representation of $T_m$. Seeing $\overline{H_C}$ as a subgroup of $\mathcal{O}_E^\times$, we are allowed to write that $z\in\overline{H_C}$ acts on $V_{((a_,b_i,c_i)_{1\leq i\leq g},d)}$ by multiplication by $\prod\limits_{I=1}^g \sigma_i(z)^{a_i}\overline{\sigma_i(z)}^{-c_i}$.

\begin{prop}
The action of $\overline{H_C}$ on $V_{((a_i,b_i,c_i),d)}$ is trivial if and only if there exists $m\in\mathbb{Z}$ such that for every $1\leq i\leq g$ 
$$a_i-c_i=m$$
\end{prop}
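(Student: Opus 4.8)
The statement to prove concerns exactly when $z \in \overline{H_C} \subset \mathcal{O}_E^\times$ acts trivially on $V_{((a_i,b_i,c_i),d)}$. By the preceding paragraph, $z$ acts by multiplication by $\prod_{i=1}^g \sigma_i(z)^{a_i}\,\overline{\sigma_i(z)}^{-c_i}$. The plan is to translate "this character is trivial on the finite-index torsion-free subgroup $\overline{H_C}$ of $\mathcal{O}_E^\times$'' into an arithmetic condition, and then to show that the only way such a character kills a finite-index subgroup of the unit group is for the exponents to be forced into the shape $a_i - c_i = m$ for a common $m$.

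First I would dispose of the easy direction. If $a_i - c_i = m$ for all $i$, then the character $z \mapsto \prod_i \sigma_i(z)^{a_i}\overline{\sigma_i(z)}^{-c_i}$ should be rewritten as $\prod_i \sigma_i(z)^{c_i}\overline{\sigma_i(z)}^{-c_i} \cdot \prod_i \sigma_i(z)^{m}$, i.e. as $\prod_i \bigl(\sigma_i(z)/\overline{\sigma_i(z)}\bigr)^{c_i} \cdot N_{E/\mathbb{Q}}(z)^{m/?}$ — more precisely $\prod_i \sigma_i(z)^{a_i-c_i}\cdot\prod_i(\sigma_i(z)\overline{\sigma_i(z)})^{c_i}$, where the second factor is $\prod_i N_{F/\mathbb{Q}}$-type and the first is $N_{E/F}$-type. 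On $\overline{H_C}$ the element has the displayed diagonal form $\mathrm{diag}(z,1,\alpha(z)^{-1})$ with multiplier $1$, so $z\alpha(z) = z\overline{z} = 1$ for the relevant component; hence each $\sigma_i(z)\overline{\sigma_i(z)} = |\sigma_i(z)|^2$ is constrained, and one computes directly that the product reduces to $\prod_i \sigma_i(z)^{m}\cdot(\text{unit of absolute value }1)^{\cdots}$, which by the defining relations of $\overline{H_C}$ (the norm-one condition read off from the matrix, together with $z \in \mathcal{O}_E^\times$ forcing $N_{E/\mathbb{Q}}(z) = \pm 1$) collapses to $1$. So triviality holds.

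For the converse — which I expect to be the main obstacle — suppose the character $\chi(z) = \prod_i \sigma_i(z)^{a_i}\overline{\sigma_i(z)}^{-c_i}$ is trivial on $\overline{H_C}$. Since $\overline{H_C}$ has finite index in $\mathcal{O}_E^\times$ and $\chi$ is a homomorphism to $\mathbb{C}^\times$ with values in algebraic numbers, $\chi$ is trivial on a finite-index subgroup of $\mathcal{O}_E^\times$, hence (as $\mathbb{C}^\times$ is torsion-divisible and $\mathrm{im}(\chi)$ lies in the roots of unity when restricted... more carefully) some power $\chi^N$ is trivial on all of $\mathcal{O}_E^\times$. The key input is Dirichlet's unit theorem together with the structure of $\mathcal{O}_E^\times$ modulo torsion: the regulator map $z \mapsto (\log|\sigma_i(z)|)_i$ has image a full-rank lattice in the trace-zero hyperplane of $\mathbb{R}^g$ (using that $\mathcal{O}_E^\times$ and $\mathcal{O}_F^\times$ have the same rank $g-1$, since $E$ is CM). Writing $\chi(z) = \prod_i \sigma_i(z)^{a_i}\overline{\sigma_i(z)}^{-c_i}$ and taking $\log|\cdot|$, triviality of $\chi^N$ on $\mathcal{O}_E^\times$ forces $\sum_i (a_i - c_i)\log|\sigma_i(z)| = 0$ for all units $z$ (here I use $|\overline{\sigma_i(z)}| = |\sigma_i(z)|$). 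Because the vectors $(\log|\sigma_i(z)|)_i$ span the hyperplane $\{\sum_i x_i = 0\}$, this means the linear functional $(x_i) \mapsto \sum_i (a_i - c_i)x_i$ vanishes on that hyperplane, i.e. $(a_i - c_i)_i$ is proportional to $(1,1,\dots,1)$; since the $a_i - c_i$ are integers, $a_i - c_i = m$ for a common $m \in \mathbb{Z}$. This is precisely the claim. The delicate point to get right is the passage from "trivial on $\overline{H_C}$'' to the vanishing of the $\log$-linear form on the whole unit hyperplane — one must check that no "phase'' ambiguity (the argument of $\sigma_i(z)$, as opposed to its absolute value) can conspire to make $\chi$ trivial without the absolute-value part already being trivial; this follows because the subgroup of $\mathcal{O}_E^\times$ on which all $|\sigma_i(z)|$ are fixed is exactly the (finite) torsion subgroup, so controlling the absolute values controls $\chi$ up to finite order, which is enough.
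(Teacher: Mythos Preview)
Your argument for the ``hard'' direction (trivial action $\Rightarrow$ all $a_i-c_i$ equal) is correct and is exactly the paper's argument: take $\log|\cdot|$ of the relation $\prod_i\sigma_i(z)^{a_i}\overline{\sigma_i(z)}^{-c_i}=1$, use $|\overline{\sigma_i(z)}|=|\sigma_i(z)|$ to obtain $\sum_i(a_i-c_i)\log|\sigma_i(z)|=0$, and invoke Dirichlet's unit theorem to conclude that $(a_i-c_i)_i$ lies on the line $\mathbb{R}\cdot(1,\dots,1)$. Your detour through $\chi^N$ and all of $\mathcal{O}_E^\times$ is unnecessary but harmless: $\overline{H_C}$ itself already has rank $g-1$, so its log-image already spans the trace-zero hyperplane.

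The ``easy'' direction, however, has a genuine gap. First, the claim that the multiplier being $1$ forces $z\alpha(z)=z\overline z=1$ is false: for $g=\mathrm{diag}(z,1,\alpha(z)^{-1})$ one computes directly from $J_b$ that the multiplier is automatically $1$ for \emph{every} $z\in E^\times$, with no constraint on $z\overline z$. In particular $|\sigma_i(z)|$ is not forced to be $1$. Your subsequent rewriting of the character is also off by a sign (you obtain $\overline{\sigma_i(z)}^{c_i}$ rather than $\overline{\sigma_i(z)}^{-c_i}$).

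More importantly, even after one correctly shows, as the paper does, that
\[
\Bigl|\prod_i\sigma_i(\epsilon)^{a_i}\overline{\sigma_i(\epsilon)}^{-c_i}\Bigr|^2
=\Bigl|\prod_i\sigma_i(\epsilon)\overline{\sigma_i(\epsilon)}\Bigr|^{m}
=|N_{E/\mathbb{Q}}(\epsilon)|^{m}=1,
\]
you still only know that $\chi(\epsilon)$ lies on the unit circle, not that it equals $1$. Your write-up never bridges this gap. The paper closes it by invoking the hypothesis that $K$, hence $\overline{H_C}$, is \emph{neat}: the image $\chi(\overline{H_C})\subset\mathbb{C}^\times$ is then neat as well, i.e.\ torsion-free, and since its elements are algebraic of absolute value $1$ (indeed roots of unity, e.g.\ because $\mathcal{O}_E^\times$ has finite index over $\mu_E\cdot\mathcal{O}_F^\times$ and $\chi$ takes values in $\{\pm 1\}$ on totally real units), the image must be trivial. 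Without neatness the statement is simply false---a root of unity $\zeta\in\overline{H_C}$ would act by a nontrivial root of unity---so this is not a cosmetic omission but the missing idea.
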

\begin{proof}
First, let us suppose that $\overline{H_C}$ acts trivially on $V_{((a_i,b_i,c_i),d)}$, meaning that for every $\epsilon\in\overline{H_C}$, $\prod\limits_{i=1}^g\sigma_i(\epsilon)^{a_i}\overline{\sigma_i(\epsilon)}^{-c_i}=1$. Then for every $\epsilon\in\overline{H_C}$,
$$\sum\limits_{i=1}^g(a_i-c_i)\log(|\sigma_i(\epsilon)|)=0.$$
Thus the vector $(a_i-c_i)_{1\leq i\leq g}$ is orthogonal to the vectors $(\log(|\sigma_i(\epsilon)|))_{1\leq i\leq g}$ which generate a space of dimension $g-1$ according to Dirichlet's unit theorem. The orthogonal to this space is the line spanned by $(1,\cdots,1)$. Consequently, there exists $m\in\mathbb{Z}$ such that $1\leq i\leq g$,
$$a_i-c_i=m.$$
On the other hand, let us suppose there exists $m\in\mathbb{Z}$ such that $a_i-c_i=m$ for every $1\leq i\leq g$; then for every $\epsilon\in\overline{H_C}$,
$$\prod\limits_{i=1}^g|\sigma_i(\epsilon)^{a_i}\overline{\sigma_i(\epsilon)}^{-c_i}|^2=|\prod\limits_{i=1}^g\sigma_i(\epsilon)\overline{\sigma_i(\epsilon)}|^m=1.$$
As $\overline{H_C}$ is neat and the image of a neat group is neat, we have 
$$\prod\limits_{i=1}^g\sigma_i(\epsilon)^{a_i}\overline{\sigma_i(\epsilon)}^{-c_i}=1.$$
\end{proof}

Linked to the notion of positivity of certain weights, we will say that $\sigma_i\in\mathfrak{S}_3$ is positive if $l(\sigma_i)\leq 1$ and negative otherwise. $(\sigma_i)$ will be called totally positive (resp. negative) if for every $1\leq i\leq g$, $\sigma_i$ is positive (resp. negative).

\begin{prop}
If $\sigma=(\sigma_i)_{1\leq i\leq g}$ is not totally positive or totally negative, then for every irreducible sub-representation $V_\lambda$ of $\Lambda^p(V_\mathbb{C}^{\oplus r})$, $\overline{H_C}$ acts non trivially on $V_{\sigma(\lambda+\rho)-\rho}$.
\end{prop}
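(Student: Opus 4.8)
The plan is to combine the criterion of the previous proposition with the explicit table of the weights $\sigma_i(\lambda_i+\rho_i)-\rho_i$ recorded above. Write $\mu:=\sigma(\lambda+\rho)-\rho$ and denote the $i$-th block of $\mu$ by $(a_i',b_i',c_i')$. By the preceding proposition, $\overline{H_C}$ acts trivially on $V_\mu$ if and only if the integer $a_i'-c_i'$ does not depend on $i$; so everything reduces to computing $a_i'-c_i'$ as a function of $\sigma_i$ and of the block $(a_i,b_i,c_i)$ of $\lambda$.

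First I would read off from the table the six possible values of $a_i'-c_i'$:
\[
\mathrm{id}:\ a_i-c_i,\qquad (12):\ b_i-c_i-1,\qquad (23):\ a_i-b_i-1,
\]
\[
(123):\ c_i-b_i-3,\qquad (132):\ b_i-a_i-3,\qquad (13):\ c_i-a_i-4.
\]
The key point is that, using only the dominance inequalities $a_i\ge b_i\ge c_i$ (valid since $V_\lambda$ is the irreducible representation of highest weight $\lambda$), these values separate according to the length of $\sigma_i$: when $\sigma_i$ is positive, i.e. $\sigma_i\in\{\mathrm{id},(12),(23)\}$, each of the three corresponding values is $\ge -1$, whereas when $\sigma_i$ is negative, i.e. $\sigma_i\in\{(123),(132),(13)\}$, each of the three corresponding values is $\le -3$.

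Granting this, the conclusion is immediate: if $\sigma$ is neither totally positive nor totally negative, pick $i_0$ with $\sigma_{i_0}$ positive and $i_1$ with $\sigma_{i_1}$ negative; then $a_{i_0}'-c_{i_0}'\ge -1>-3\ge a_{i_1}'-c_{i_1}'$, so $i\mapsto a_i'-c_i'$ is non-constant and, by the criterion above, $\overline{H_C}$ acts non-trivially on $V_{\sigma(\lambda+\rho)-\rho}$. Note that nothing beyond dominance of $\lambda$ is used here, so the hypothesis that $V_\lambda$ be a sub-representation of $\Lambda^p(V_\mathbb{C}^{\oplus r})$ is not actually needed for this statement.

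I do not expect a genuine obstacle: this is a finite case check resting entirely on the shape of $\rho$ and the Weyl action. The one thing to be careful with is the permutation/positivity bookkeeping — one must make sure that the entries of the table, and hence the ``$\ge -1$ versus $\le -3$'' split, are indexed by the convention that calls $\sigma_i$ positive exactly when $l(\sigma_i)\le 1$; once that is pinned down, the dichotomy and the proof follow directly.
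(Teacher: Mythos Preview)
Your argument is correct and follows the same approach as the paper's own proof: both compute the first-minus-third coordinate of $\sigma_i(\lambda_i+\rho_i)-\rho_i$, use dominance to bound it by $\ge -1$ in the positive cases and $\le -3$ in the negative cases, and conclude via the criterion of the preceding proposition. The only difference is cosmetic --- the paper treats one illustrative pair $\bigl((12),(123)\bigr)$ and leaves the remaining pairs to the reader, whereas you tabulate all six values at once; your remark that only dominance of $\lambda$ is used is also correct.
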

\begin{proof}
Let us suppose for example that $\sigma_i=(12)$ and $\sigma_j=(123)$. Writing $\lambda=((a_i,b_i,c_i)_{1\leq i\leq g},d)$, we get 
$$\sigma_i((a_i,b_i,c_i)+\rho_i)-\rho_i=(b_i-1,a_i+1,c_i)$$
$$\sigma_j((a_j,b_j,c_j)+\rho_j)-\rho_j=(c_j-2,a_j+1,b_j+1).$$
For the action of $\overline{H_C}$ to be trivial, we must have $b_i-1-c_i=c_j-2-b_j-1=m$. As $b_i\geq c_i$, $m\geq -1$. As $b_j\geq c_j$, $m\leq -3$.

Thus, it is impossible. The other cases are treated the same way.
\end{proof}

We are now able to calculate the weights appearing in $\partial H^n(S(\mathbb{C}),\mu(V_\lambda))$ for an irreducible direct factor $V_\lambda$ of $\Lambda^p(V_\mathbb{C}^{\vee\oplus r})$. \newline

Let $(p_0,q_0)$ be a couple of non negative integers such that $n=p_0+q_0$. Theorem 3.4. brings us to study weights appearing in $H^{p_0}(\overline{H_C}, H^{q_0}(W,Res_Q^G V_\lambda))$.

Due to theorem 3.5., we take $\sigma\in\mathfrak{S}_3^g$ such that $l(\sigma)=q_0$. Propositions 3.4., 3.5. and 3.6. allow us to suppose that $\sigma$ is totally positive or totally negative.

If $\sigma$ is totally positive, let us denote 
$$I=\{i/1\leq i\leq g\,\text{ and }\, \sigma_i=id\}$$
$$J=\{j/1\leq j\leq g\,\text{ and }\, \sigma_j=(12)\}$$
$$K=\{k/1\leq k\leq g\,\text{ and }\, \sigma_k=(23)\}.$$
Then we have $l(\sigma)=|J|+|K|$. Moreover, propositions 3.4. and 3.5. allow us to consider $\lambda=((a_i,b_i,c_i)_{1\leq i\leq g},d)$ such that there exists $m$ such that 
$$\forall i,j,k\in I\times J\times K,\; a_i-c_i=b_j-1-c_j=a_k-b_k-1=m.$$
As $a_i\geq c_i$, $b_j\geq c_j$ and $a_k\geq b_k$, we must have 
$$m\geq -1.$$ 
For a regular character, 
$$m\geq 0.$$
Under these conditions, using the definition of the morphism $\omega\circ h_\infty$ describing the Shimura datum of the boundary $\partial S$, we see that $V_{\sigma(\lambda+\rho)-\rho}$ will be of weight 
$$w=-2d-\sum\limits_i (2a_i+b_i)-\sum\limits_j(2b_j+a_j-1)-\sum\limits_k(2a_k+c_k-1)$$
$$=-2d-\sum\limits_{l=1}^g(a_l+b_l+c_l)+\sum\limits_i(c_i-a_i)+\sum\limits_j(c_j-b_j+1)+\sum\limits_k(b_k-a_k+1)$$
$$=p-mg.$$
Besides, as $\overline{H_C}$ has cohomological dimension $g-1$ and $0\leq |J|+|K|\leq g$, we suppose that
 $$0\leq p_0\leq g-1$$
 $$0\leq q_0\leq g$$
 $$0\leq n\leq 2g-1.$$
 
Analogously, for $\sigma$ totally negative, we write
$$I=\{i/1\leq i\leq g\,\text{ et }\, \sigma_i=(123)\}$$
$$J=\{j/1\leq j\leq g\,\text{ et }\, \sigma_j=(132)\}$$
$$K=\{k/1\leq k\leq g\,\text{ et }\, \sigma_k=(13)\}.$$
We have $l(\sigma)=2|I|+2|J|+3|K|$. Propositions 3.4. and 3.5. allow us to consider elements $\lambda=((a_i,b_i,c_i)_{1\leq i\leq g},d)$ such that there exists $m$ such that
$$\forall i,j,k, \; c_i-2-b_i-1=b_j-1-a_j-2=c_k-2-a_k-2=-m.$$
As $b_i\geq c_i$, $a_j\geq b_j$ and $a_k\geq c_k$, we have 
$$m\geq 3.$$
For a regular character, 
$$m\geq 4.$$ 
$V_{\sigma(\lambda+\rho)-\rho}$ will be of weight
$$w=-2d-\sum\limits_i(2c_i+a_i-3)-\sum\limits_j(2b_j+c_j-3)-\sum\limits_k(2c_k+b_k-4)$$$$
=-2d-\sum\limits_{l=1}^g(a_l+b_l+c_l)+\sum\limits_i(b_i-c_i+3)+\sum\limits_j(a_j-b_j+3)+\sum\limits_k(a_k-c_k+4)$$
$$=p+mg.$$
This time, we have 
$$0\leq p_0\leq g-1$$
$$2g\leq q_0\leq 3g$$
$$2g\leq n\leq 4g-1.$$

\begin{theo}
If $\lambda$ is a regular character of $G_\mathbb{C}$, the $e_\lambda$-part of the boundary motive of $A^r$ is without weight $0$ et $-1$.
\end{theo}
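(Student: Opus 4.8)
The plan is to reduce the statement, through the abelian--motives machinery, to the explicit weight computations carried out just above, and then to settle a short comparison of integers in which the regularity of $\lambda$ is used.

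First I would note that $\partial M_{gm}(A^r)^{e_\lambda}$ is a direct factor of $\partial M_{gm}(A^r)$, which is an abelian motive by Proposition 3.1, so it is itself an abelian motive. Theorem 3.2 then applies with $\alpha=-1$ and $\beta=0$: the motive $\partial M_{gm}(A^r)^{e_\lambda}$ is without weights $-1$ and $0$ if and only if, for every $n$, the object $H^nR(\partial M_{gm}(A^r)^{e_\lambda})$ of the Hodge realization has no weight $n$ and no weight $n+1$. Since the cohomology objects of the Hodge realization of $\partial M_{gm}(A^r)$ are the boundary cohomology groups, for $Q$ containing $E_{gal}$ one has $H^nR(\partial M_{gm}(A^r)^{e_\lambda})=(\partial H^n(A^r(\mathbb{C}),\mathbb{Q})\otimes_{\mathbb{Q}}Q)^{e_\lambda}$, and Proposition 3.3 identifies this with $\partial H^{n-p}(S(\mathbb{C}),\mu(V_\lambda))$. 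So the statement amounts to showing that, for every integer $\ell$, the mixed Hodge structure $\partial H^{\ell}(S(\mathbb{C}),\mu(V_\lambda))$ has no subquotient of weight $\ell+p$ and none of weight $\ell+p+1$.

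Next I would unwind $\partial H^{\ell}(S(\mathbb{C}),\mu(V_\lambda))$. Since the boundary of the Baily--Borel compactification is a finite set of points (Proposition 2.10), $\partial H^{\ell}(S(\mathbb{C}),\mu(V_\lambda))$ is the sum over those points of the stalks of $\mathcal{H}^{\ell}(i^*j_*\mu(V_\lambda))$; by Theorem 3.4 each such stalk is $\bigoplus_{p_0+q_0=\ell}\mu_{pt}\bigl(H^{p_0}(\overline{H_C},H^{q_0}(W,Res^G_QV_\lambda))\bigr)$; by Kostant's formula (Theorem 3.5) $H^{q_0}(W,Res^G_QV_\lambda)=\bigoplus_{l(\sigma)=q_0}V_{\sigma(\lambda+\rho)-\rho}$; and by Propositions 3.4, 3.5 and 3.6 only those $\sigma$ that are totally positive or totally negative and for which $\overline{H_C}$ acts trivially on $V_{\sigma(\lambda+\rho)-\rho}$ can contribute, the functor $H^{p_0}(\overline{H_C},-)$ then only producing $\binom{g-1}{p_0}$ copies and not changing the weight. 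Hence $\partial H^{\ell}(S(\mathbb{C}),\mu(V_\lambda))$ is a direct sum of pure Hodge structures $\mu_{pt}(V_{\sigma(\lambda+\rho)-\rho})$ of the weights $w$ computed above, and it remains to verify $w\notin\{\ell+p,\ell+p+1\}$ for each such summand, where $\ell=p_0+q_0$ with $0\le p_0\le g-1$.

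Finally I would carry out the case distinction. For $\sigma$ totally positive the computation above gives $w=p-mg$ with $m\ge 0$, whence $w-(\ell+p)=-(mg+p_0+q_0)\le 0$; an equality $w=\ell+p$ would force $m=p_0=q_0=0$, hence $\sigma=\mathrm{id}$ at every place, hence $m=a_i-c_i\ge 2$ by regularity --- a contradiction; so $w\le\ell+p-1$, and neither $\ell+p$ nor $\ell+p+1$ occurs. For $\sigma$ totally negative the computation gives $w=p+mg$ with $m\ge 4$ by regularity, while $q_0=l(\sigma)=2g+|K|$, where $|K|$ is the number of places with $\sigma_i=(13)$, and if $|K|\ge 1$ then regularity forces $m=a_k-c_k+4\ge 6$ at such a place; using $p_0\le g-1$ one finds $w-(\ell+p)=mg-p_0-q_0\ge g(m-3)+1-|K|$, which is at least $g+1\ge 2$ when $|K|=0$ and at least $2g+1\ge 3$ when $|K|\ge 1$, so $w\ge\ell+p+2$ and again neither $\ell+p$ nor $\ell+p+1$ occurs. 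This exhausts the summands, so $\partial M_{gm}(A^r)^{e_\lambda}$ is without weights $0$ and $-1$. The point that needs genuine care is the totally negative case: the crude bounds $m\ge 4$ and $q_0\le 3g$ only give $w\ge\ell+p+1$ with equality a priori possible, so one really must invoke that the minimal admissible value of $m$ forces $|K|=0$, thereby lowering $q_0$ and creating the extra unit of slack.
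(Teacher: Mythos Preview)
Your proof is correct and follows the paper's own argument essentially step for step: the same reduction via abelianness (Proposition 3.1) and Theorem 3.2 to a weight check on $\partial H^{\ell}(S(\mathbb{C}),\mu(V_\lambda))$, the same unwinding through Theorems 3.4--3.5 and Propositions 3.4--3.6, and the same case split into totally positive and totally negative $\sigma$. The only cosmetic difference is in the totally negative case: the paper argues by contradiction (the worst case $m=4$, $n=4g-1$ forces $K=\emptyset$, hence $q_0=2g$ and $n\leq 3g-1$), whereas you split directly on $|K|=0$ versus $|K|\ge 1$ and use the sharper bound $m\ge 6$ in the second sub-case; both encode exactly the observation you flag at the end, and your final sentence is precisely the paper's point.
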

\begin{proof}
Due to theorem 3.2. and proposition 3.1., it suffices to check that $(\partial H^{n+p}(\mathcal{A}^r,\mathbb{Q})\otimes\mathbb{C})^{e_\lambda}\simeq\partial H^n(S(\mathbb{C}),\mu(V_\lambda))$ is without weights $n+p$ and $n+p+1$.\newline

If $n\leq 2g-1$, because of the previous calculation, this Hodge structure is either zero either of weight $p-mg$ that we write $n+p-n-mg$ with $\lambda=((a_i,b_i,c_i)_{1\leq i\leq g},d)$ such that there exists a decomposition $[\![1,g]\!]=I\cup J\cup K$ and an integer $m$ such that
$$\forall i\in I, j\in J, k\in K,\; m=a_i-c_i=b_j-1-c_j=a_k-b_k-1.$$
We saw that in this case $m\geq 0$. Thus $-n-mg<0$ unless $n=0$ and $m=0$. When $n=0$, we must have $I=[\![1,g]\!]$, meaning that $a_i-c_i=m$ for every $1\leq i\leq g$. As $\lambda$ is regular, we have for all $1\leq i\leq g$
$$a_i>c_i.$$
So $m=a_i-c_i>0$. Thus $-n-mg<0$ also in this case.\newline

If $n\geq 2g$, the Hodge structure considered is either zero either of weight $p+mg=n+p-n+mg$ with $\lambda=((a_i,b_i,c_i)_{1\leq i\leq g},d)$ such that there exists a decomposition $[\![1,g]\!]=I\cup J\cup K$ and an integer $m$ such that
$$\forall i\in I, j\in J, k\in K,\; -m=c_i-2-b_i-1=b_j-1-a_j-2=c_k-2-a_k-2.$$
We saw in this case that $m\geq 4$. Thus $-n+mg>1$ unless $m=4$ and $n=4g-1$. If $K\neq \emptyset$, $m> 4$. Thus, if $m=4$, $K=\emptyset$. Then, with the previous notations, we have
$$n=p_0+q_0$$
$$q_0=2g$$
$$0\leq p_0\leq g-1.$$
Consequently, $n<3g$, showing that the prohibited weights are avoided as wanted. 
\end{proof}

\begin{coro}
Let $\lambda$ be a regular character of $G_\mathbb{C}$. Then the $e_\lambda$-part of the interior motive of $A^r$ is well defined.
\end{coro}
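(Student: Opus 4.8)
The plan is to deduce the corollary directly from Theorem 3.1 (the abstract construction of the interior motive) by feeding it the hypothesis just verified in Theorem 3.6. First I would recall that Theorem 3.6 establishes exactly that, for a regular character $\lambda$, the motive $\partial M_{gm}(A^r)^{e_\lambda}$ is without weights $0$ and $-1$ in $DM_{gm}(E(G,X))_Q$. This is precisely the running hypothesis of Theorem 3.1, applied with the idempotent $e = e_\lambda$ obtained via $\tilde\mu$ from the idempotent of $\Lambda^p(V_\mathbb{C}^{\vee\oplus r})$ cutting out $V_\lambda$.

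Next I would invoke Theorem 3.1 itself. Fixing any weight filtration $C_{\leq -2}\to \partial M_{gm}(A^r)^{e_\lambda}\to C_{\geq 1}\to C_{\leq -2}[1]$ avoiding weights $0$ and $-1$ (one exists because the relevant weight structure on $DM_{gm}(E(G,X))_Q$ admits weight truncations, cf.\ \cite{Bon10}, \cite{Wil08}), part (a) of Theorem 3.1 gives that $M_{gm}(A^r)^{e_\lambda}$ is without weight $-1$ and $M_{gm}^c(A^r)^{e_\lambda}$ is without weight $1$, so the effective Chow motives $\text{Gr}_0(M_{gm}(A^r)^{e_\lambda})$ and $\text{Gr}_0(M_{gm}^c(A^r)^{e_\lambda})$ are defined; part (c) then supplies the canonical isomorphism $\text{Gr}_0(M_{gm}(A^r)^{e_\lambda})\xrightarrow{\sim}\text{Gr}_0(M_{gm}^c(A^r)^{e_\lambda})$ factoring the canonical morphism $M_{gm}(A^r)^{e_\lambda}\to M_{gm}^c(A^r)^{e_\lambda}$. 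By the terminology fixed after Theorem 3.1, this common Chow motive is by definition the $e_\lambda$-part of the interior motive of $A^r$, and parts (a)--(e) show it is canonical (independent of the chosen weight filtration) and carries the action of the endomorphism algebra; in particular it is well defined. I would also remark that the Hecke action is respected, since the constructions of Theorem 3.1 are functorial in the motive and the Hecke correspondences act through endomorphisms of $M_{gm}(A^r)^{e_\lambda}$.

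There is essentially no obstacle here: the corollary is a formal consequence, the only real content having already been discharged in Theorem 3.6 (the weight computation via the degeneration formula of \cite{BW}, Kostant's theorem, and the unit-group cohomology computation) and in the cited machinery of \cite{Wil08}. The one point to mention explicitly is the passage from the Hodge-realization computation to a statement about the motive itself: this is where Proposition 3.1 (identifying the $e_\lambda$-part of $\partial H^*(A^r)$ with $\partial H^{*-p}(S,\mu(V_\lambda))$), Proposition 3.2 (abelianness of $\partial M_{gm}(A^r)$, hence of its direct factor $\partial M_{gm}(A^r)^{e_\lambda}$), and Theorem 3.2 (avoidance of weights for abelian motives is detected on realizations) combine, and all of this is already packaged inside the proof of Theorem 3.6. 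So the proof of the corollary is simply: apply Theorem 3.6, then apply Theorem 3.1.

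\begin{proof}
By Theorem 3.6, since $\lambda$ is regular, the motive $\partial M_{gm}(A^r)^{e_\lambda}$ is without weights $0$ and $-1$. We may therefore apply Theorem 3.1 with the idempotent $e=e_\lambda$: choosing a weight filtration
$$C_{\leq -2}\rightarrow \partial M_{gm}(A^r)^{e_\lambda}\rightarrow C_{\geq 1}\rightarrow C_{\leq -2}[1]$$
avoiding those weights, part (a) shows that $M_{gm}(A^r)^{e_\lambda}$ is without weight $-1$ and $M_{gm}^c(A^r)^{e_\lambda}$ is without weight $1$, so that the effective Chow motives $\text{Gr}_0(M_{gm}(A^r)^{e_\lambda})$ and $\text{Gr}_0(M_{gm}^c(A^r)^{e_\lambda})$ are defined. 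Part (c) then provides the canonical isomorphism $\text{Gr}_0(M_{gm}(A^r)^{e_\lambda})\stackrel{\sim}{\rightarrow}\text{Gr}_0(M_{gm}^c(A^r)^{e_\lambda})$ factorizing the canonical morphism $M_{gm}(A^r)^{e_\lambda}\rightarrow M_{gm}^c(A^r)^{e_\lambda}$. By the definition recalled after Theorem 3.1, this Chow motive is the $e_\lambda$-part of the interior motive of $A^r$; it is canonical, in particular independent of the chosen weight filtration, and by part (a) it carries the action of the endomorphism algebra of $M_{gm}(A^r)^{e_\lambda}$, hence in particular is stable under the Hecke action.
\end{proof}
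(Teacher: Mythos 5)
Your argument is exactly the intended one: Theorem 3.6 supplies the weight-avoidance hypothesis, and Theorem 3.1 then produces the interior motive together with its canonicity and the endomorphism/Hecke action. The paper leaves the corollary without an explicit proof precisely because it is this immediate combination of Theorems 3.6 and 3.1, so your write-up matches the paper's (implicit) reasoning.
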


When the boundary motive is trivial, avoidance of weights is trivially fulfilled. In case of Hilbert-Blumenthal varieties (see \cite{Wil09}), the condition for the boundary motives considered to be trivial concerns the character defining the motive; it is a condition of parallelism.

In the case of Picard varieties, we can find a condition for the $e_\lambda$-part of the boundary motive of $A^r$ to be trivial. 

\begin{de}
We call a character $\lambda=((a_i,b_i,c_i),d)$ of $\mathbb{G}_{m,\mathbb{C}}^{4g-1}$ Kostant-parallel if there exists an integer $m$ and a decomposition $[\![1,g]\!]=I\cup J\cup K$ with
$$I=\{i\in[\![1,g]\!]\,/\,a_i-c_i=m\}$$
$$J=\{j\in[\![1,g]\!]\,/\,b_j-c_j-1=m\}$$
$$K=\{k\in[\![1,g]\!]\,/\,a_k-b_k-1=m\}.$$
\end{de}

\begin{prop}
The $e_\lambda$-part of the boundary motive of $A^r$ is trivial if and only if $\lambda$ is not Kostant-parallel.
\end{prop}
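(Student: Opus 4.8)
The plan is to unwind the definitions and reduce to a clean statement about the cohomology $\partial H^n(S(\mathbb{C}),\mu(V_\lambda))$. First I would recall that by Theorem 3.1 the $e_\lambda$-part of the boundary motive $\partial M_{gm}(A^r)^{e_\lambda}$ is an abelian motive, and by Theorem 3.2 (applied with, say, $\alpha$ very negative and $\beta$ very large) it is the zero object if and only if all its cohomology realizations $H^n R(\partial M_{gm}(A^r)^{e_\lambda})$ vanish; equivalently, by Proposition 3.1, if and only if $\partial H^{n}(S(\mathbb{C}),\mu(V_\lambda))=0$ for all $n$. So the statement to prove is: these boundary cohomology groups all vanish precisely when $\lambda$ is not Kostant-parallel. (One also needs that the $l$-adic realizations vanish, but the Hodge and $l$-adic computations run in parallel via Theorem 3.3, and the Hodge side is what has been set up.)

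Next I would invoke Theorem 3.3 and Theorem 3.5 to rewrite $\partial H^n(S(\mathbb{C}),\mu(V_\lambda))$ as a direct sum, over pairs $(p_0,q_0)$ with $p_0+q_0=n$ and over $\sigma\in\mathfrak{S}_3^g$ with $l(\sigma)=q_0$, of the terms $H^{p_0}(\overline{H_C}, V_{\sigma(\lambda+\rho)-\rho})$, the $Q/W$-representation on $V_{\sigma(\lambda+\rho)-\rho}$ being one-dimensional since $Q/W$ is a torus. By Proposition 3.3, each such term is zero unless $\overline{H_C}$ acts trivially on $V_{\sigma(\lambda+\rho)-\rho}$, in which case it is $V_{\sigma(\lambda+\rho)-\rho}^{\binom{g-1}{p_0}}\neq 0$. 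Hence $\partial H^\ast(S(\mathbb{C}),\mu(V_\lambda))$ vanishes in all degrees if and only if there is \emph{no} $\sigma\in\mathfrak{S}_3^g$ for which $\overline{H_C}$ acts trivially on $V_{\sigma(\lambda+\rho)-\rho}$.

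It then remains to identify the existence of such a $\sigma$ with Kostant-parallelism of $\lambda$. By Proposition 3.6, if $\sigma$ is neither totally positive nor totally negative then $\overline{H_C}$ acts non-trivially, so $\sigma$ may be taken totally positive or totally negative. For $\sigma$ totally positive, writing $I,J,K$ for the indices where $\sigma_i=\mathrm{id}$, $(12)$, $(23)$ respectively and using the explicit values of $\sigma_i(\lambda_i+\rho_i)-\rho_i$ together with Proposition 3.5 (triviality $\iff$ the quantity $a_i-c_i$, resp. $b_j-1-c_j$, resp. $a_k-b_k-1$, is a common integer $m$), I get exactly the defining condition of Definition 3.2 with that decomposition $[\![1,g]\!]=I\cup J\cup K$. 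Conversely a Kostant-parallel $\lambda$, with its decomposition and its integer $m$, produces a totally positive $\sigma$ (namely $\sigma_i=\mathrm{id}$ on $I$, $(12)$ on $J$, $(23)$ on $K$) on which $\overline{H_C}$ acts trivially, hence a non-zero summand. The totally negative case is symmetric: it yields the conditions $c_i-2-b_i-1=b_j-1-a_j-2=c_k-2-a_k-2$, i.e. $b_i-c_i+3$, $a_j-b_j+3$, $a_k-c_k+4$ all equal, which one checks is the same family of linear relations as Definition 3.2 after relabelling (using that for the weights of the dual space $a$ and $c$ play symmetric roles up to sign and the Tate-twist shift), so it defines no new case; alternatively one simply notes that a $\sigma$ exists iff its ``opposite'' $w_0\sigma$-type exists, so totally positive and totally negative give the same condition on $\lambda$.

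The main obstacle I expect is bookkeeping in the last step: checking carefully that the totally-negative relations and the totally-positive relations cut out the \emph{same} set of characters $\lambda$, so that Definition 3.2 (phrased only with the positive chambers) is not too restrictive. This is the point where one must be attentive to the shifts by $\rho$ and to the normalization in which $V_\lambda\subset\Lambda^p(V_\mathbb{C}^{\vee\oplus r})$, and it is essentially the content of the explicit Kostant table displayed before the statement; everything else is a formal unwinding of Theorems 3.2–3.5 and Propositions 3.1, 3.3, 3.5, 3.6.
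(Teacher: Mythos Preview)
Your approach is essentially the paper's: reduce triviality of the abelian motive to vanishing of its realizations, then compute these via the Burgos--Wildeshaus formula (Theorem~3.4), Kostant (Theorem~3.5), and the $\overline{H_C}$-cohomology computation (Propositions~3.4--3.6), and identify non-vanishing with Kostant-parallelism. Two small points: (i) a couple of your cross-references are off (the abelianness is Proposition~3.1, not Theorem~3.1; the identification with $\partial H^{n-p}(S,\mu(V_\lambda))$ is Proposition~3.3, not~3.1); (ii) the paper does not squeeze triviality out of Theorem~3.2 by letting $\alpha\to-\infty$, $\beta\to+\infty$ --- that would require an extra boundedness argument --- but simply invokes conservativity of the realizations on abelian motives (Theorem~1.12 of \cite{Wil14}), which gives ``realizations vanish $\Rightarrow$ motive is zero'' in one step. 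Your verification that the totally negative case imposes the same linear conditions as the totally positive one (after relabelling $I,J,K$ and shifting $m$) is correct and more explicit than the paper's terse sentence, and is indeed needed since Definition~3.2 is phrased only in the positive form.
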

\begin{proof}
Because of proposition 3.5., $\overline{H_C}$ acts trivially on the groups $H^q(W,Res_Q^G V_\lambda)$ if and only if $\lambda$ is not Kostant-parallel. 

Because of proposition 3.3. and theorem 3.4., Hodge realization of the $e_\lambda$-part of the boundary motive of $A^r$ is trivial if and only if $\lambda$ is not Kostant-parallèle. 

Because of the conservativity of the realizations on abelian motives (see theorem 1.12. of \cite{Wil14}), the $e_\lambda$-part of the boundary motive of $A^r$ is trivial if and only if $\lambda$ is not Kostant-parallel.
\end{proof}

In the case of Picard surfaces (see \cite{Wil14}), the regularity of the character $\lambda$ is equivalent to the avoidance of weights $-1$ and $0$ by the $e_\lambda$-part of the boundary motive. The next proposition shows that in the case of Picard varieties there exists non regular characters $\lambda$ such that weights $0$ et $-1$ are avoided without having the $e_\lambda$-part of the boundary motive of $A^r$ being trivial.

\begin{prop}
Let us suppose that $g=2$. We consider the character $((1,0,-1),(0,0,0),-1)$ whose is Kostant-parallel, dominant but not regular. The representation of highest weight associated is a direct factor of $\Lambda^2(V_\mathbb{C}^\vee)$. Neverthelesse, the $e_\lambda$-part of the boundary motive of $A$ is without weight $0$ and $-1$.
\end{prop}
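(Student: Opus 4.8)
The plan is to rerun the computation from the proof of Theorem 3.6 with the explicit numerical data of this $\lambda$. Here $r=1$ and $p=-2d-\sum_{i=1}^{2}(a_i+b_i+c_i)=2-(0+0)=2$, so the representation $V_\lambda$ sits inside $\Lambda^2(V_\mathbb C^\vee)$ and $e_\lambda$ is an idempotent of $M_{gm}(A)$. By the reduction used in the proof of Theorem 3.6 (Proposition 3.1, Proposition 3.3 and Theorem 3.2), it suffices to check that for every $n$ the Hodge structure
$$(\partial H^{n+2}(A(\mathbb C),\mathbb Q)\otimes\mathbb C)^{e_\lambda}\simeq\partial H^{n}(S(\mathbb C),\mu(V_\lambda))$$
either vanishes or is pure of a weight different from $n+2$ and $n+3$.

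Next I would feed $V_\lambda$ into the formula of Theorem 3.4 and evaluate the inner term $H^{q}(W,Res^G_Q V_\lambda)$ by Kostant's Theorem 3.5, writing it as the sum of the $V_{\sigma(\lambda+\rho)-\rho}$ over $\sigma\in\mathfrak S_3^2$ with $l(\sigma)=q$. The decisive reduction is that, after applying $H^{p_0}(\overline{H_C},-)$, only the $\sigma$ for which $\overline{H_C}$ acts trivially on $V_{\sigma(\lambda+\rho)-\rho}$ survive: Proposition 3.6 forces such a $\sigma$ to be totally positive or totally negative, and then Proposition 3.5 (with $g=2$) makes triviality equivalent to the single equality $a'_1-c'_1=a'_2-c'_2$, where $(a'_i,b'_i,c'_i):=\sigma_i(\lambda_i+\rho_i)-\rho_i$. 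The core of the proof is therefore the finite verification over $\mathfrak S_3^2$, carried out with the explicit values of $\sigma_i(\lambda_i+\rho_i)-\rho_i$ recalled before Theorem 3.6. One finds that in the totally positive case the only surviving $\sigma$ are $((12),\mathrm{id})$ and $((23),\mathrm{id})$, both of length $1$, with associated integer $m=0$, hence contributing via $w=p-mg$ a summand of weight $2$; and in the totally negative case the only surviving $\sigma$ are $((123),(13))$ and $((132),(13))$, both of length $5$, with $m=4$, hence contributing via $w=p+mg$ a summand of weight $10$. Combined with Proposition 3.4 (for a trivial $\overline{H_C}$-action, $\overline{H_C}\simeq\mathbb Z$ has cohomological dimension $1$ and $H^{p_0}(\overline{H_C},-)$ is weight-preserving and nonzero only for $p_0\in\{0,1\}$), this shows that $\partial H^{n}(S(\mathbb C),\mu(V_\lambda))$ is pure of weight $2$ for $n\in\{1,2\}$, pure of weight $10$ for $n\in\{5,6\}$, and zero otherwise; since $\{n+2,n+3\}\subset\{3,4,5\}$ for $n\in\{1,2\}$ and $\{n+2,n+3\}\subset\{7,8,9\}$ for $n\in\{5,6\}$, the weights $n+2$ and $n+3$ are in every case avoided, which is the assertion.

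The step that requires care is the enumeration in the second paragraph: because the index $i=2$ is the ``non-regular'' one (there $a_2=b_2=c_2=0$), one must make sure that no extra $\sigma$ survives at an intermediate length $q$ and produces a weight equal to some $n+2$ or $n+3$. This is precisely where Proposition 3.6 is essential, as it eliminates all the ``mixed'' $\sigma$ and reduces the surviving condition, index by index, to the one linear relation of Proposition 3.5; what is left is then a short finite check, so I expect no serious obstacle beyond keeping the bookkeeping straight.
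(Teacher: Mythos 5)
Your proposal is correct and follows exactly the paper's route: reduce via Propositions 3.1, 3.3 and Theorem 3.2 to a weight computation on $\partial H^n(S(\mathbb C),\mu(V_\lambda))$, then use Theorems 3.4, 3.5 together with Propositions 3.4, 3.5, 3.6 to enumerate the surviving Kostant weights. Your finite check — surviving $\sigma$ are $((12),\mathrm{id})$, $((23),\mathrm{id})$ (length $1$, $m=0$, weight $2$) and $((123),(13))$, $((132),(13))$ (length $5$, $m=4$, weight $10$) — reproduces the paper's table ($\partial H^{1,2}$ of weight $2$, $\partial H^{5,6}$ of weight $10$, all other degrees vanishing), which the paper states without displaying the enumeration.
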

\begin{proof}
The previous calculations show that 
$$(\partial H^{0}(S(\mathbb{C}),\mu(V_\lambda))=0$$ 
$$(\partial H^{3}(S(\mathbb{C}),\mu(V_\lambda))=0$$
$$(\partial H^{4}(S(\mathbb{C}),\mu(V_\lambda))=0$$
$$(\partial H^{7}(S(\mathbb{C}),\mu(V_\lambda))=0$$
$$(\partial H^{1}(S(\mathbb{C}),\mu(V_\lambda))\text{ has weight }2=1+2-1$$ 
$$(\partial H^{2}(S(\mathbb{C}),\mu(V_\lambda))\text{ has weight }2=2+2-2$$
$$(\partial H^{5}(S(\mathbb{C}),\mu(V_\lambda))\text{ has weight }2+4\times2=5+2+3$$ 
$$(\partial H^{6}(S(\mathbb{C}),\mu(V_\lambda))\text{ has weight }2+4\times2=6+2+2.$$
This shows that the $e_\lambda$-part of the boundary motive of $A$ is without weight $-1$ and $0$.
\end{proof}

However we can find some type of characters amongst which the avoidance of weights $0$ et $-1$ is equivalent to regularity.

\begin{de}
We say that a character $((a_i,b_i,c_i)_{1\leq i\leq g},d)$ of $T_m\subset G_\mathbb{C}$ is parallel if for every $1\leq i,j\leq g$,
$$a_i=a_j$$
$$b_i=b_j$$
$$c_i=c_j$$
\end{de}

\begin{prop}
If $\lambda$ is a dominant parallel character of $T_m\subset G_\mathbb{C}$, then $M_{gm}(A^r)^{e_\lambda}$ is without weights $0$ et $-1$ if and only if $\lambda$ is regular.
\end{prop}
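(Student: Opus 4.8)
The plan is to reduce everything to the weight computations already carried out and to use the dichotomy ``regular $\Rightarrow$ weights avoided'' (Theorem~3.6) together with a converse that exploits parallelism. For the forward direction there is nothing new: if $\lambda$ is a dominant parallel character which is moreover regular, then Corollary~3.1 (which is Theorem~3.6 applied to $\partial M_{gm}(A^r)^{e_\lambda}$) gives that $\partial M_{gm}(A^r)^{e_\lambda}$ is without weights $0$ and $-1$, hence by Theorem~3.1(a) the motive $M_{gm}(A^r)^{e_\lambda}$ is without weight $-1$; and avoidance of weight $0$ follows symmetrically from the statement about $M_{gm}^c(A^r)^{e_\lambda}$ avoiding weight $1$ via duality, or more directly from the same weight-filtration argument. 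So the only real content is the converse.

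For the converse I would argue by contraposition: assume $\lambda=((a,b,c)_{1\le i\le g},d)$ is parallel, dominant but \emph{not} regular, and exhibit a forbidden weight in $M_{gm}(A^r)^{e_\lambda}$. Since $\lambda$ is dominant we have $a\ge b\ge c$; non-regularity means $a=b$ or $b=c$. In either case the common value $a-c$ (call it $m$) together with the decomposition $[\![1,g]\!]=I\cup J\cup K$ must be examined as in the proof of Theorem~3.6. The point is that for a parallel character, since all the $(a_i,b_i,c_i)$ coincide, the constraint ``$a_i-c_i=m$ for $i\in I$, $b_j-c_j-1=m$ for $j\in J$, $a_k-b_k-1=m$ for $k\in K$'' forces, whenever $J$ or $K$ is nonempty, that $a-c$, $b-c-1$ and $a-b-1$ coincide — which for a fixed triple is impossible unless two of $I,J,K$ are empty. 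Hence for a parallel $\lambda$ the only Kostant-parallel configuration in the totally positive range is $I=[\![1,g]\!]$ with $m=a-c$, and in the totally negative range $m = b-c+3 = a-b+3$, again forcing (by parallelism) a single block. Then one reruns the weight formulas $w=p-mg$ (totally positive, $n=0$, so $I=[\![1,g]\!]$) and $w=p+mg$ (totally negative): if $\lambda$ is not regular, $a=c$ is possible, giving $m=0$ in the positive range and the weight $p-0\cdot g=p$ in degree $n=0$, i.e. a class of weight $n+p$ in $\partial H^{n+p}$ with $n=0$ — precisely the weight $0$ that must be avoided. Tracking through Proposition~3.1 and Theorem~3.2 this produces a nonzero weight-$0$ (or weight-$(-1)$) piece in $M_{gm}(A^r)^{e_\lambda}$, contradicting avoidance.

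Concretely the steps are: (1) record that for parallel $\lambda$ the index sets $I,J,K$ arising in Theorem~3.6 degenerate, so the only Kostant-parallel case is the ``all-$I$'' case with $m=a-c$ in the positive range (and the analogous single-block case in the negative range); (2) observe that $\lambda$ regular $\iff a>b>c \iff m=a-c\ge 2$ and the analogous strict inequalities, while $\lambda$ non-regular allows $m=a-c\in\{0,1\}$ (and in the negative range the minimal value of the relevant parameter drops from the regular bound, e.g. $m=3$ instead of $m=4$); (3) feed $m$ into the weight formula and check that the non-regular values produce exactly the weights $n+p$ and $n+p+1$ that Theorem~3.6's proof was ruling out — i.e. for $n=0$, $m=a-c=0$ gives weight $0$, and one checks $m=1$, $n=0$ is excluded by dominance being strict somewhere or yields weight $-g<-1$ so is harmless, whereas the genuinely dangerous case is $a=c$; symmetrically in the negative range the value $m=3$ (attained only when $K=\emptyset$, forced here by parallelism as well) gives weight $p+3g$ in some degree $n$ with $2g\le n\le 3g-1$, i.e. weight $n+p+(3g-n)$, and one checks that for $n=3g-1$ this is $n+p+1$; (4) conclude via Proposition~3.1, Theorem~3.2 (abelian motives, realizations detect weights) and Theorem~3.1 that avoidance of weights $0,-1$ by $M_{gm}(A^r)^{e_\lambda}$ fails.

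The main obstacle is bookkeeping in step~(3): one must be careful that a non-regular \emph{parallel} character actually realizes the bad weight in an \emph{allowed} cohomological degree $n$ (with $0\le n\le 2g-1$ in the positive range, $2g\le n\le 4g-1$ in the negative range and $0\le p_0\le g-1$), rather than the bad weight only appearing in a degree where the corresponding cohomology group happens to vanish. For the positive range this is clean because $n=0$ is always allowed and corresponds to $I=[\![1,g]\!]$, which is exactly the parallel situation; for the negative range one must verify that when $\lambda$ is parallel and non-regular the minimal attainable $m$ is $3$ and not $4$, and that $K=\emptyset$ (forced by parallelism, since $K\ne\emptyset$ would force $a-c\ge 2$ hence by parallelism $m\ge4$), so that the relevant degree is $n=q_0=2g$ up to $3g-1$, and one picks $n=3g-1$ to hit weight $n+p+1$. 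Once these degree constraints are matched the argument closes.
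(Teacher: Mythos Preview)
Your step~(1) is wrong, and this undermines the positive-range half of your converse. For a parallel dominant character $\lambda=((a,b,c)_{1\le i\le g},d)$ the totally positive Kostant-parallel configurations are \emph{not} reduced to $I=[\![1,g]\!]$. If $a=b$ then $K=[\![1,g]\!]$ (all $\sigma_i=(23)$) is perfectly admissible with $m=a-b-1=-1$; if $b=c$ then $J=[\![1,g]\!]$ (all $\sigma_i=(12)$) is admissible with $m=b-c-1=-1$. Your claim that ``$a-c$, $b-c-1$, $a-b-1$ must coincide'' is only the condition for \emph{all three} of $I,J,K$ to be nonempty; it says nothing about the single-block cases $J=[\![1,g]\!]$ or $K=[\![1,g]\!]$, which are exactly what the paper uses. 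As a consequence your $n=0$ argument only detects the fully degenerate case $a=b=c$: for $a=b>c$ or $a>b=c$ one has $m=a-c>0$ and the weight $p-mg$ at $n=0$ is strictly below $p$, so no forbidden weight appears there.

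The paper's argument is different and cleaner. For $a=b$ it takes the Weyl element $\tau=((23))_{1\le i\le g}$ of length $g$ (there is a typo ``$(12)$'' in the paper, but the displayed formula $p+\sum_j(b_j-a_j+1)=p+g$ is the one for $(23)$). This gives $q_0=g$, hence $n$ ranges over $g\le n\le 2g-1$, and the weight is $p+g$; at $n=g$ this equals $n+p$, so weight $0$ is present. Your negative-range argument, using all $\sigma_i=(132)$ (resp.\ $(123)$) when $a=b$ (resp.\ $b=c$) to get $m=3$, $q_0=2g$, weight $p+3g$, and reading off weight $-1$ at $n=3g-1$, is correct and in fact matches the paper's second paragraph. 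Since exhibiting weight $-1$ alone already contradicts ``without weights $0$ and $-1$'', your proof can be salvaged by discarding the faulty positive-range discussion and relying solely on the negative-range computation; but as written, step~(1) is false and the conclusion you draw from it about the positive range does not hold.
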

\begin{proof}
We have already shown that a regular character avoids weights $0$ et $-1$. 

For the reverse, let us suppose that $\lambda$ is a non regular parallel character. Let us suppose for example that there exists $1\leq i\leq g$ such that $a_i=b_i$ (the case $b_i=c_i$ being similar). Then for every $1\leq j\leq g$, $a_j=b_j$. Let us consider the element of the Weyl group $\tau=((12))_{1\leq j\leq g})\in\mathfrak{S}_3^g$ where $(12)$ is the transposition exchanging the two first elements of a set with three ordered elements. We have $l(\tau)=g$. Using theorems 3.4. and 3.5., we see that the weight
$$-2d-\sum\limits_{j=1}^g(2a_j+c_j-1)=p+\sum\limits_{j=1}^g(b_j-a_j+1)=p+g$$
is a weight of the variation of Hodge structure $\mathcal{H}^ni^*j_*\circ\mu_S(V_\lambda)$ for every $g\leq n\leq 2g-1$. 

In particular, using proposition 3.3., we deduce from the previous calculus that $(\partial H^{g+p}(\mathcal{A}^r,\mathbb{Q})\otimes_\mathbb{Q}\mathbb{C})^{e_\lambda}$ has weight $p+g$. Because of theorem 3.2., $\partial M_{gm}(A^r)^{e_\lambda}$ does not avoid weight $0$.

A similar calculus for $((a_i,b_i,c_i)_{1\leq i\leq g},d)$ parallel with $a_i=b_i$ (the case $b_i=c_i$ being treated the same way) shows that the variations of Hodge structure $\mathcal{H}^ni^*j_*\circ\mu_S(V_\lambda)$ has weight $p+3g$ pour $2g\leq n\leq 3g-1$. In particular, because of proposition 3.3., $(\partial H^{p+3g-1}(\mathcal{A}^r,\mathbb{Q})\otimes_\mathbb{Q}\mathbb{C})^{e_\lambda}$ has weight $p+3g$. Using theorem 3.2., we see that the motive $\partial M_{gm}(^\lambda\mathcal{V})$ does not avoid weight $-1$.

\end{proof}

Moreover, we can push the calculus in order to know the weights appearing in $i^*R^kj_*R^pf_*\mathbb{Q}_{\mathcal{A}^r}$, where $f:A^r\rightarrow S$ is the structural morphism. This is done in the following proposition.

\begin{theo}
If $p>6rg$ $i^*R^kj_*Rp^pf_*\mathbb{Q}_{\mathcal{A}^r}$ is zero for every $k$. \\
For $p\leq 6rg$ and $p\neq 1, 6rg-1$, we have \\
(a) for every $0\leq k\leq g-1$, the weights of $i^*R^kj_*R^pf_*\mathbb{Q}_{\mathcal{A}^r}$ are $\{p-mg\}_{0\leq m\leq N_p}$ with 
$$N_p=min([\frac{p}{g}],6r-[\frac{p}{g}], 2r)$$
(b) for every $g\leq k\leq 2g-1$, the weights of$i^*R^kj_*R^pf_*\mathbb{Q}_{\mathcal{A}^r}$ are $\{p-mg\}_{-1\leq m\leq N_p}$ with 
$$N_p=min([\frac{p}{g}],[\frac{p+r(k-g+1)}{k+1}],[\frac{6rg+r(k-g+1)-p}{k+1}],[\frac{6rg-p}{g}])-1$$
(c) for every $2g\leq k\leq 3g-1$, the weights of $i^*R^kj_*R^pf_*\mathbb{Q}_{\mathcal{A}^r}$ are $\{p+mg\}_{3\leq m\leq N_p}$ with 
$$N_p=min([\frac{p}{g}],[\frac{p+r(k-g+1)}{k+1}],[\frac{6rg+r(k-g+1)-p}{k+1}],[\frac{6rg-p}{g}])+3$$
(d) for every $3g\leq k\leq 4g-1$, the weights of $i^*R^kj_*R^pf_*\mathbb{Q}_{\mathcal{A}^r}$ are $\{p+mg\}_{4\leq m\leq N_p}$ with 
$$N_p=min([\frac{p}{g}],6r-[\frac{p}{g}], 2r)+4$$

If $p=1$, $i^*R^kj_*R^1f_*\mathbb{Q}_{\mathcal{A}^r}$ has only weight $1$ for every $1\leq k\leq g$, weight $1+4g$ for $3g-1\leq k\leq 4g-2$ and is trivial for other $k$.

When $p=6rg-1$, $i^*R^kj_*R^{6rg-1}f_*\mathbb{Q}_{\mathcal{A}^r}$ has only weight $6rg-1$ for $1\leq k\leq g$, weight $6rg-1+4g$ for $3g-1\leq k\leq 4g-2$ and is trivial for other $k$.
\end{theo}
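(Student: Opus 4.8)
The plan. The first step is a reduction to a purely combinatorial statement about highest weights. Since $f$ is the structural morphism of a Kuga--Sato family, one has $R^pf_*\mathbb{Q}_{\mathcal{A}^r}=\mathcal{H}^p(\mathcal{A}^r/S)=\mu(\Lambda^p(V_\mathbb{C}^{\vee\oplus r}))$, and by Proposition 3.2 this variation splits as the sum of the $\mu(V_\lambda)$ over the irreducible summands $V_\lambda=V_{((a_i,b_i,c_i),d)}$ of $\Lambda^p(V_\mathbb{C}^{\vee\oplus r})$, subject to the three families of inequalities of that proposition. Because $i$ is the inclusion of a point, $i^*$ is exact, so $i^*R^kj_*R^pf_*\mathbb{Q}_{\mathcal{A}^r}=\mathcal{H}^k i^*j_*\mu_S(\Lambda^p(V_\mathbb{C}^{\vee\oplus r}))$, and Theorems 3.4 and 3.5 together with Proposition 3.4 express this as a sum of $\mu_{pt}$ of characters $V_{\sigma(\lambda+\rho)-\rho}$, where $\sigma\in\mathfrak{S}_3^g$, $l(\sigma)\le k$, $k-l(\sigma)\le g-1$, and $\overline{H_C}$ acts trivially on $V_{\sigma(\lambda+\rho)-\rho}$. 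By Propositions 3.5 and 3.6 this last condition forces $\sigma$ to be totally positive or totally negative and $\lambda$ to be Kostant--parallel (Definition 3.2) with the value $m$ and the decomposition $[\![1,g]\!]=I\sqcup J\sqcup K$ attached to $\sigma$; and the weight computations already performed before Theorem 3.6 show that the resulting $\mu_{pt}(V_{\sigma(\lambda+\rho)-\rho})$ is pure of weight $p-mg$ (totally positive) or $p+mg$ (totally negative). Thus the weights of $i^*R^kj_*R^pf_*\mathbb{Q}_{\mathcal{A}^r}$ are exactly the $p\mp mg$ for which such data exist.

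With this in hand, the extreme and the middle ranges are separated by elementary counting. Vanishing for $p>6rg$ is immediate from $\dim_\mathbb{C}V_\mathbb{C}^{\vee\oplus r}=6rg$. For $p\le 6rg$ one has $l(\sigma)=|J|+|K|\in[0,g]$ when $\sigma$ is totally positive and $l(\sigma)=2g+|K|\in[2g,3g]$ when it is totally negative; combined with $k-g+1\le l(\sigma)\le k$ this shows that in the ranges $0\le k\le g-1$ and $g\le k\le 2g-1$ only totally positive $\sigma$ occur, in $2g\le k\le 3g-1$ and $3g\le k\le 4g-1$ only totally negative ones, and moreover that $|I|\ge 1$ in case (a) (hence $m\ge 0$), $|K|\ge 1$ in case (d) (hence $m\ge 4$), while in cases (b) and (c) the decompositions with $I=\emptyset$, resp. $K=\emptyset$, are admissible, giving the lower bounds $m\ge -1$, resp. $m\ge 3$. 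This produces the lower endpoints of the weight intervals in the statement.

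The substantive point is the upper bound $N_p$: one must find the largest $m$ for which there is a Kostant--parallel $V_\lambda\subset\Lambda^p(V_\mathbb{C}^{\vee\oplus r})$ carrying a decomposition of the length prescribed by $k$. This is a linear optimisation in $(a_i,b_i,c_i,d)$ under $-r\le c_i\le b_i\le a_i\le r$, $p=-2d-\sum_i(a_i+b_i+c_i)$, $3rg+\sum_i(a_i^-+b_i^-+c_i^-)\ge -d\ge\sum_i(a_i^++b_i^++c_i^+)$, the Kostant--parallel equations for the chosen decomposition, and the size condition on $|J|+|K|$ (resp. $|K|$) forced by $l(\sigma)\in[k-g+1,k]$. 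I would solve it by summing the constraints over the $g$ blocks: the bound $[\frac{p}{g}]$ comes from the value $p$ being on average $mg$ per block; the dual bound $[\frac{6rg-p}{g}]$ (appearing as $6r-[\frac{p}{g}]$ in the degenerate regimes of (a) and (d)) from the upper constraint on $-d$ applied to the complementary weights; the bound $2r$ from $a_i-c_i\le 2r$; and the two $k$--dependent terms $[\frac{p+r(k-g+1)}{k+1}]$ and $[\frac{6rg+r(k-g+1)-p}{k+1}]$ from the fact that at least $k-g+1$ blocks must lie in $J\cup K$ — such blocks force a strict inequality, hence cost an extra unit, and this extra cost interacts with the $-d$ budget over the remaining blocks. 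In each case one exhibits an explicit extremal $\lambda$ (with the signs of the coordinates chosen to keep them in $[-r,r]$ and a value of $d$ inside its window) realising the minimum of the four quantities, checks that the length restriction does not further truncate the range of $m$, and adds the appropriate shift ($0$, $-1$, $+3$, $+4$) coming from the lower endpoint of the $m$--interval. Finally the two excluded values are done by hand: when $p=1$ one has $\Lambda^1(V_\mathbb{C}^{\vee\oplus r})=\bigoplus_i(V_{i,+}^\vee\oplus V_{i,-}^\vee)^{\oplus r}$, so $\lambda$ is supported at a single embedding, and a case check of which lengths $l(\sigma)$ leave $\overline{H_C}$ acting trivially isolates $m=0$ (length $1$, so $1\le k\le g$, weight $1$) and $m=4$ (length $3g-1$, so $3g-1\le k\le 4g-2$, weight $1+4g$); the case $p=6rg-1$ follows by the duality $\Lambda^{6rg-1}(V_\mathbb{C}^{\vee\oplus r})\cong\Lambda^1(V_\mathbb{C}^{\oplus r})\otimes\det(V_\mathbb{C}^{\vee\oplus r})$, or by the same direct inspection.

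The main obstacle is the exact evaluation of $N_p$ in cases (b) and (c): one has to show that the four competing bounds are exhaustive (no admissible $\lambda$ beats all of them) and simultaneously tight (for the minimal one there is a genuine configuration with the right block sizes, the right coordinate signs, and an admissible $d$), and that the interval $[k-g+1,k]$ for $l(\sigma)$ never cuts the range of $m$ further than these bounds already do. The bookkeeping is delicate because of the interplay between the floor functions, the dependence on $k$ through $k-g+1$ and $k+1$, and the truncations $a_i^{\pm}$ in the constraint governing $-d$.
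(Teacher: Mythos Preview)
Your plan is correct and follows essentially the same route as the paper: reduce via Proposition~3.2 and Theorems~3.4--3.5 to Kostant--parallel characters with totally positive or totally negative $\sigma$, read off the weight as $p\mp mg$ from the computations preceding Theorem~3.6, separate the four $k$--ranges by the constraints $k-g+1\le l(\sigma)\le k$ on $|I|$ and $|K|$, and then for each range determine the extremal $m$ by exhibiting explicit $\lambda$ and invoking the inequalities of Proposition~3.2 for sharpness. The paper likewise treats $p=1$ by hand and deduces $p=6rg-1$ from the involution $((a_i,b_i,c_i),d)\mapsto((-c_i,-b_i,-a_i),-3rg-d)$, which is exactly your duality; and its proof of cases~(c),~(d) is, like yours, declared ``similar'' rather than written out, so your honest flagging of the bookkeeping in~(b)--(c) as the delicate point matches the paper's own level of detail.
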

\begin{proof}
First, we treat the case $p=1$. Using the decomposition given of $V_\mathbb{C}^{\vee \oplus r}$ with theorems 3.4. and 3.5., we get the result.

We remark the existence of a bijection between characters giving irreducible sub-representations of $\Lambda^p(V_\mathbb{C}^{\vee\oplus r})$ and those giving sub-representations of $\Lambda^{6rg-p}(V_\mathbb{C}^{\vee\oplus r})$ which is
$$((a_i,b_i,c_i)_{1\leq i\leq g},d)\mapsto ((-c_i,-b_i,-a_i)_{1\leq i\leq g},-3rg-d).$$
This bijection permits to deduce the result for $p=6rg-1$ from the one for $p=1$.\newline

We use the notations of theorem 3.6. Let $V_\lambda$ be an irreducible sub-representation of $\Lambda^p(V_\mathbb{C}^{\vee\oplus r})$. When $0\leq k\leq g-1$, the set denoted by $I$ is not empty. Moreover, for $i\in I$ we have $a_i-c_i=m$. Thus, $0\leq m\leq 2r$.

Let us suppose that $p\leq rg$. Let $m$ be an integer such that $0\leq mg\leq p$. There exists a character $((a_i,b_i,c_i)_{1\leq i\leq g},d)$ with $a_i=m$, $b_i=0$ and $c_i=0$ (appart for maybe one $i$, for which $b_i=1$ depending on the parity of $p$) such that $V_\lambda\subset \Lambda^p(V_\mathbb{C}^{\vee\oplus r})$ and $i^*R^kj_*\mu(V_\lambda)$ has weight $p-mg$. Proposition 3.2. shows that $m$ can not be bigger than $[\frac{p}{g}]$.

When $r< m$, $mg\leq p$ and $mg\leq 6rg-p$, there exists a character $((a_i,b_i,c_i)_{1\leq i\leq g},d)$ with $a_i=r$, $b_i=0$ et $c_i=r-m$ (except $b_i=1$ for maybe one $i$ depending on the parity of $p$) such that $V_\lambda\subset\Lambda^p(V_\mathbb{C}^{\vee\oplus r})$ and $i^*R^kj_*\mu(V_\lambda)$ has weight $p-mg$.

This description together with proposition 3.2. shows also that the values of $m$ such that $p-mg$ is a weight of $i^*R^kj_*R^pf_*\mathbb{Q}_{\mathcal{A}^r}$ can not be bigger than $N_p$.\newline

Let $k$ be an integer such that $g\leq k\leq 2g-1$ and $V_\lambda\subset\Lambda^p(V_\mathbb{C}^{\vee\oplus r})$. In this case, one of the two sets denoted $J$ and $K$ is not empty. Without loss of generality, we can suppose $J\neq\emptyset$. Then, there exists $m$ such that $a_j-b_j-1=m$. Thus $-1\leq m$.

Let us suppose that $p\leq rg$. If we choose $0\leq (m+1)g\leq p$, we can find a character $\lambda$ such that $a_i=m+1$, $b_i=0$ et $c_i=0$ (except maybe for one $i$ such that $c_i=-1$ depending of the parity of $p$) with $V_\lambda\subset\Lambda^p(V_\mathbb{C}^{\vee\oplus r})$ and such that $i^*R^kj_*\mu(V_\lambda)$ has weight $p-mg$. Again, proposition 3.2. shows that $m+1\leq [\frac{p}{g}]$.

When $m>r$, $mg\leq N_p$, we choose $k-g+1$ integers $1\leq i\leq g$ such that $a_i=r$, $b_i=c_i=r-m-1$ (unless maybe one $i$ such that $c_i=b_i-1$ depending of the parity of $p$); for the other integers $1\leq i\leq g$, we take $a_i=r$, $b_i=0$ et $c_i=r-m$. Then there exists a character $\lambda=((a_i,b_i,c_i),d)$ with these values of $a_i,b_i,c_i$ such that $V_\lambda\subset\Lambda^p(V_\mathbb{C}^{\vee\oplus r})$ and $i^*R^kj_*\mu(V_\lambda)$ has weight $p-mg$.\newline

The cases $2g\leq k\leq 3g-1$ et $3g\leq k\leq 4g-1$ are treated in a similar way.
\end{proof}

\section{The motive of an automorphic form}

Let $\lambda$ be a character of $T_m\subset G_\mathbb{C}$. We have an action of the Hecke algebra of $S$ on the triangle (see e.g. example 5.4. in \cite{WilBF})
$$\partial M_{gm}(A^r)^{e_\lambda}\rightarrow M_{gm}(A^r)^{e_\lambda}\rightarrow M_{gm}^c(A^r)^{e_\lambda}\rightarrow\partial M_{gm}(A^r)^{e_\lambda}[1].$$

Let us suppose in addition that $\partial M_{gm}(A^r)^{e_\lambda}$ is without weights $0$ and $-1$. As in the case of Picard surfaces (see corollary 3.10. of \cite{Wil14}), the Hecke algebra $\mathfrak{H}(K,G(\mathbb{A}_f))$ associated to the group $K$ acts on $Gr_0M_{gm}(\mathcal{A}^r)^{e_\lambda}$.\newline

In order to define isotypical components of the interior motive, we are going to study the action of the Hecke algebra on the realizations of the interior motive.

Let $\overline{E}$ be an algebraic closure of $E(G,X)$ and $Q$ a field of coefficients containing $E_{gal}$.

\begin{prop}
Under the previous hypotheses, we have isomorphisms
$$(H^n_!(\mathcal{A}^r(\mathbb{C}),\mathbb{Q})\otimes_\mathbb{Q}Q)^{e_\lambda}\simeq (H^{n-p}_!(S(\mathbb{C}),\mu(V_\lambda))$$
$$(H^n_!(\mathcal{A}^r_{\overline{E}},\mathbb{Q}_l)\otimes_\mathbb{Q}Q)^{e_\lambda}\simeq (H^{n-p}_!(S_{\overline{E}},\mu_l(V_\lambda)).$$
\end{prop}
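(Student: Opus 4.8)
The plan is to reduce the statement for interior cohomology to the already-proven statement for ordinary cohomology (Proposition 3.1) and boundary cohomology by pushing the idempotent $e_\lambda$ through the defining exact sequence of interior cohomology. Recall that $H^n_!(\mathcal{A}^r,\mathbb{Q})$ is defined as the image of the natural map $H^n_c(\mathcal{A}^r,\mathbb{Q})\to H^n(\mathcal{A}^r,\mathbb{Q})$, equivalently as the cokernel of $H^{n-1}(\partial\mathcal{A}^r)\to H^n_c(\mathcal{A}^r)$ or the kernel of $H^n(\mathcal{A}^r)\to H^n(\partial\mathcal{A}^r)$, using the long exact sequence attached to the triangle $\partial M_{gm}(\mathcal{A}^r)\to M_{gm}(\mathcal{A}^r)\to M_{gm}^c(\mathcal{A}^r)\to\partial M_{gm}(\mathcal{A}^r)[1]$. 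Since $e_\lambda$ is an endomorphism of this triangle (it comes from $\mathrm{End}_{CHM^s(S)_Q}(h(\mathcal{A}^r/S))$ and acts compatibly on all three terms and on their realizations), and since $Q$ is flat over $\mathbb{Q}$ so that $-\otimes_\mathbb{Q}Q$ is exact, taking the $e_\lambda$-part commutes with forming images, kernels and cokernels.

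The steps I would carry out are as follows. First, recall Proposition 3.1 (and its $l$-adic analogue, obtained by the same spectral-sequence argument with the $l$-adic realization in place of the Hodge one), giving $(H^n(\mathcal{A}^r,\mathbb{Q})\otimes Q)^{e_\lambda}\simeq H^{n-p}(S,\mu(V_\lambda))$, and similarly for $H^n_c$ using the dual spectral sequence $\partial H^m(S,\mathcal{H}^n(\mathcal{A}^r/S))$ or Poincaré duality on the fibres; the point is that $e_\lambda$ cuts out exactly the local system $\mu(V_\lambda)$ from $\mathcal{H}^p(\mathcal{A}^r/S)$ and kills all other fibre cohomology degrees. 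Second, observe that all the relevant Leray-type spectral sequences degenerate after applying $e_\lambda$ because only $n=p$ survives in the $\mathcal{H}^n(\mathcal{A}^r/S)$-direction, so the maps $H^\bullet_c\to H^\bullet$ for $\mathcal{A}^r$ translate into the corresponding maps $H^\bullet_c(S,\mu(V_\lambda))\to H^\bullet(S,\mu(V_\lambda))$ up to the degree shift by $p$. Third, take images: $(H^n_!(\mathcal{A}^r,\mathbb{Q})\otimes Q)^{e_\lambda}=\mathrm{im}\big((H^n_c(\mathcal{A}^r)\otimes Q)^{e_\lambda}\to(H^n(\mathcal{A}^r)\otimes Q)^{e_\lambda}\big)=\mathrm{im}\big(H^{n-p}_c(S,\mu(V_\lambda))\to H^{n-p}(S,\mu(V_\lambda))\big)=H^{n-p}_!(S,\mu(V_\lambda))$, and likewise for the $l$-adic cohomology of $\mathcal{A}^r_{\overline{E}}$ with $\mu_l(V_\lambda)$. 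The compatibility of these isomorphisms with the Hecke action follows because the spectral sequences and the idempotent $e_\lambda$ are all Hecke-equivariant, as recorded just before the proposition.

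The main obstacle is making precise that the idempotent $e_\lambda$, viewed now as acting on interior cohomology, genuinely corresponds to the projector onto $\mu(V_\lambda)$-coefficients and is compatible with the morphism $H^\bullet_c\to H^\bullet$ used to define $H^\bullet_!$; concretely one must check that the two spectral sequences (for $H^\bullet$ and for $H^\bullet_c$) receive compatible $e_\lambda$-actions and that the map between them induced by $\mathcal{A}^r_c\to\mathcal{A}^r$ is a morphism of spectral sequences respecting $e_\lambda$. Once this is granted, exactness of $-\otimes_\mathbb{Q}Q$ and the functoriality of ``taking the image of a map between $Q[e_\lambda]$-modules'' finish the argument with no further computation; the weight hypothesis ($\partial M_{gm}(\mathcal{A}^r)^{e_\lambda}$ without weights $0,-1$) is not actually needed to identify these realizations but is carried along because it is the standing hypothesis under which the interior motive — whose realizations these are — exists in the first place.
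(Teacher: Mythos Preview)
Your proposal is correct and follows essentially the same route as the paper. The paper's proof is a two-line remark saying one should redo the spectral-sequence argument of Proposition~3.3 for compactly supported cohomology and then (implicitly) pass to images; you have simply spelled out those steps---the Leray spectral sequence for $H^\bullet$ and $H^\bullet_c$, the collapse after applying $e_\lambda$, and compatibility of the map $H^\bullet_c\to H^\bullet$---in more detail than the paper does.
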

\begin{proof}
It suffices to show a version of proposition 3.3. for the cohomology with compact support. This can be done exactly as in this proposition.
\end{proof}

We consider $R_l$ the l-adic realization and $R_h$ the Hodge realization on $DM_{gm}(E(G,X))_Q$.
\begin{prop}
The realizations $R_l$ and $R_h$ of the $e_\lambda$-part of the interior motive of $A^r$ are
$$R_h(Gr_0M_{gm}(\mathcal{A}^r)^{e_\lambda})=H^{2g}_!(S(\mathbb{C}),\mu(V_\lambda))[-2g-p]$$
$$R_l(Gr_0M_{gm}(\mathcal{A}^r)^{e_\lambda})=H^{2g}_!(S_{\overline{E}},\mu_l(V_\lambda))[-2g-p].$$
\end{prop}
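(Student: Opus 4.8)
The plan is to compute the realizations of $Gr_0 M_{gm}(\mathcal{A}^r)^{e_\lambda}$ by combining the description of interior cohomology from Proposition 3.7 with the weight-avoidance statement for the boundary motive (Theorem 3.7) and the general machinery of Theorem 3.1. The key point is that $Gr_0$ of a motive without weight $-1$ is, by construction, the weight-$0$ part, and on realizations this corresponds to selecting the single cohomological degree where interior cohomology is concentrated.

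First I would note that since $\lambda$ is assumed to make $\partial M_{gm}(A^r)^{e_\lambda}$ without weights $0$ and $-1$, Theorem 3.1 applies: $Gr_0(M_{gm}(A^r)^{e_\lambda})$ is a well-defined Chow motive, and by part (d) of that theorem it represents the functor $N\mapsto \Hom_{DM_{gm}}(M_{gm}(A^r)^{e_\lambda},N)$ on Chow motives. Applying a realization functor $R$ (which is exact for the weight structure in the appropriate sense — $R$ sends weight-$0$ Chow motives to pure Hodge structures / pure Galois representations placed in degree $0$), I get that $R(Gr_0 M_{gm}(\mathcal{A}^r)^{e_\lambda})$ is the maximal subquotient of $R(M_{gm}(A^r)^{e_\lambda})$ that is pure of weight $0$, up to the shift conventions. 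Concretely, $R_h(M_{gm}(\mathcal{A}^r)^{e_\lambda})$ has cohomology objects $(H^n(\mathcal{A}^r(\mathbb{C}),\mathbb{Q})\otimes Q)^{e_\lambda}$ in degree $n$, and by Proposition 3.3 together with the standard comparison $H^{n-p}(S,\mu(V_\lambda))$ fits into the sequence relating it to $\partial H^{n-p}$, so the weight-$0$ part is governed by where $H^{n-p}_!(S(\mathbb{C}),\mu(V_\lambda))$ lives.

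Next I would invoke the fact that $S$ is a Shimura variety of dimension $2g$ and $\mu(V_\lambda)$ a pure variation of Hodge structure, so $H^m_!(S,\mu(V_\lambda))$ is pure of weight $m + (\text{weight of } V_\lambda)$, and it vanishes outside the middle degree $m = 2g$ precisely when the boundary contributions in the surrounding degrees have been killed — which is exactly what the weight-avoidance hypothesis on $\partial M_{gm}(A^r)^{e_\lambda}$ buys us via the long exact sequence
$$\cdots \to \partial H^{n-1} \to H^{n}_! \to H^n \oplus H^n_c{}' \to \cdots$$
(one uses that interior cohomology is the image of $H^*_c \to H^*$, and that the weights $0,-1$ of the boundary motive translate, via Theorem 3.2, into the vanishing of the boundary cohomology in the degrees adjacent to the middle after the $e_\lambda$-cut and the degree shift by $p$). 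Hence $R_h(Gr_0 M_{gm}(\mathcal{A}^r)^{e_\lambda})$ is concentrated in a single degree, namely $2g+p$, with value $H^{2g}_!(S(\mathbb{C}),\mu(V_\lambda))$, which is the claimed formula; the $l$-adic case is identical, using $R_l$ and $\mu_l(V_\lambda)$ together with Proposition 4.1.

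The main obstacle I anticipate is pinning down precisely that $Gr_0$ commutes with realization in the sense needed — i.e.\ that $R(Gr_0 M) = $ the weight-$0$ graded piece of $R(M)$ as an object of the derived category $D$, placed correctly with respect to the shift by $-2g-p$. This requires knowing that the realization functors are compatible with the weight structures (abelian motives have realizations lying in the heart, the weight structure on $DM^{Ab}$ is detected on realizations by Theorem 3.2 and the conservativity of Theorem 1.12 of \cite{Wil14}), and then tracking the cohomological shift: $M_{gm}(A^r)^{e_\lambda}$ has its $e_\lambda$-part of cohomology concentrated around degree $2g+p$ (degree $p$ coming from the fibre cohomology of $\mathcal{A}^r/S$ picked out by $e_\lambda$, degree $2g$ from the base $S$), so that $Gr_0$ — which after applying $R$ picks out interior cohomology in that degree and regards it as a Chow motive, i.e.\ a complex concentrated in degree $0$ — forces the shift $[-2g-p]$. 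Once the bookkeeping of degrees and weights is set up correctly, the identification is then a direct consequence of Proposition 3.7 (for the $l$-adic statement, Proposition 4.1) and part (d) of Theorem 3.1.
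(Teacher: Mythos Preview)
Your argument has a genuine gap in the step where you claim that $H^m_!(S(\mathbb{C}),\mu(V_\lambda))$ vanishes outside the middle degree $m=2g$. You assert that this vanishing ``is exactly what the weight-avoidance hypothesis on $\partial M_{gm}(A^r)^{e_\lambda}$ buys us via the long exact sequence''. It does not. The weight-avoidance condition on the boundary motive constrains the \emph{weights} occurring in boundary cohomology; it says nothing about the vanishing of interior cohomology in non-middle degrees. Interior cohomology $H^m_!(S,\mu(V_\lambda))$ is always pure (of the expected weight), so purity alone cannot force it to vanish for $m\neq 2g$. Equivalently, the realization of a Chow motive is allowed to have cohomology in many degrees, each pure of the appropriate weight, so knowing that $Gr_0M_{gm}(A^r)^{e_\lambda}$ is a Chow motive does not give concentration.

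The paper closes this gap by invoking an external, nontrivial input: Saper's vanishing theorem \cite{Sa}, which (for regular $\lambda$) forces the interior cohomology of $S$ with coefficients in $\mu(V_\lambda)$ to be concentrated in degree $2g$. Combined with the general fact (theorem 4.7 of \cite{Wil08}) that the realization of the interior motive is interior cohomology, and with Proposition 4.1 for the shift by $p$, this yields the statement. Your discussion of how $Gr_0$ interacts with realization is essentially a rederivation of that theorem 4.7, but you should cite it rather than leave it as ``the main obstacle I anticipate''; more importantly, you must replace the incorrect weight-avoidance argument for concentration by an appeal to Saper's result.
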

\begin{proof}
Because of theorem 4.7. of \cite{Wil08}, we know that the realization of the interior motive is interior cohomology.

The vanishing theorem of Saper (\cite{Sa}) implies that the interior cohomology of $S$ is non trivial only in degree $2g$.

The results follows the previous proposition.
\end{proof}

The following theorem can be found in the second chapter, at page 62 of \cite{Ha}.

\begin{theo}
Let $Q$ be a field containing $E_{gal}$. Then the $\mathfrak{H}(K,G(\mathbb{A}_f)\otimes_EQ$-module $H^{2g}_!(S(\mathbb{C}),\mu(V_\lambda))$ is semi-simple.
\end{theo}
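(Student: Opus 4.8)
The plan is to follow Harder's method \cite{Ha}: to exhibit the interior cohomology as a Hecke-equivariant subquotient of $L^2$-cohomology, and then to use the discrete spectral decomposition of the latter, which presents it as a direct sum of simple Hecke modules. First I would recall that $H^{2g}_!(S(\mathbb{C}),\mu(V_\lambda))$ is by definition the image of the canonical morphism $H^{2g}_c(S(\mathbb{C}),\mu(V_\lambda))\to H^{2g}(S(\mathbb{C}),\mu(V_\lambda))$, and that the Hecke correspondences, being finite and acting compatibly on cohomology and on cohomology with compact support, make this image simultaneously a $\mathfrak{H}(K,G(\mathbb{A}_f))\otimes_EQ$-submodule of $H^{2g}$ and a quotient of $H^{2g}_c$.

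Next I would bring in the $L^2$-cohomology $H^{*}_{(2)}(S(\mathbb{C}),\mu(V_\lambda))$ together with the natural Hecke-equivariant maps
$$H^{*}_c(S(\mathbb{C}),\mu(V_\lambda))\longrightarrow H^{*}_{(2)}(S(\mathbb{C}),\mu(V_\lambda))\longrightarrow H^{*}(S(\mathbb{C}),\mu(V_\lambda))$$
whose composition is the canonical map (compactly supported forms are $L^2$; $L^2$-classes have de Rham images via harmonic representatives, and the Hecke operators commute with all three functors since they come from finite étale correspondences). This presents $H^{2g}_!$ as a Hecke-equivariant quotient of the Hecke submodule $\mathrm{im}\!\left(H^{2g}_c\to H^{2g}_{(2)}\right)$ of $H^{2g}_{(2)}(S(\mathbb{C}),\mu(V_\lambda))$. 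Since submodules and quotients of a semisimple module are semisimple, it then suffices to prove that the $\mathfrak{H}(K,G(\mathbb{A}_f))\otimes_EQ$-module $H^{*}_{(2)}(S(\mathbb{C}),\mu(V_\lambda))$ is semisimple.

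For this I would invoke the decomposition of $L^2$-cohomology along the discrete automorphic spectrum (Borel--Garland, with Langlands' description of the discrete spectrum of $G$): a Hecke-equivariant isomorphism
$$H^{*}_{(2)}(S(\mathbb{C}),\mu(V_\lambda))\;\simeq\;\bigoplus_{\pi}m_{\mathrm{disc}}(\pi)\,H^{*}(\mathfrak{g},K_\infty;\pi_\infty\otimes V_\lambda)\otimes_Q\pi_f^{K},$$
where $\pi=\pi_\infty\otimes\pi_f$ runs over the finitely many discrete automorphic representations with the prescribed infinitesimal character and nonzero $K$-invariants, and the Hecke algebra acts only on the factor $\pi_f^{K}$. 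For each such $\pi$ the space $\pi_f^{K}=\bigotimes_v\pi_v^{K_v}$ is a simple module over the corresponding Hecke algebra with $Q$-coefficients (admissibility and irreducibility of $\pi_f$, together with $Q\supseteq E_{gal}$, which guarantees that these modules are already defined over $Q$ and stay semisimple there). Hence $H^{*}_{(2)}$ is a finite direct sum of simple $\mathfrak{H}(K,G(\mathbb{A}_f))\otimes_EQ$-modules, which finishes the argument.

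I expect the main obstacle to be the second step: making precise the comparison between $H^{*}_c$, $H^{*}$ and $H^{*}_{(2)}$ and the Hecke-equivariance of the connecting maps, which ultimately rests on Zucker's conjecture and Borel's regularisation theorems; once these inputs are granted the rest is formal, and all of it is carried out in Chapter~2 of \cite{Ha}. As an alternative, valid when $Q\subseteq\mathbb{C}$ (and then in general by invariance of semisimplicity under field extension), I would instead use that $H^{2g}_!$ is, by purity and Saper's vanishing theorem \cite{Sa}, a polarizable Hodge structure, that the Hecke operators are morphisms of Hodge structures and hence commute with the Weil operator, and that the transpose correspondences provide adjoints for the positive definite Hermitian form attached to a polarization; the image of $\mathfrak{H}(K,G(\mathbb{A}_f))\otimes_E\mathbb{C}$ in $\End_{\mathbb{C}}\!\left(H^{2g}_!\otimes\mathbb{C}\right)$ is then a finite-dimensional $\ast$-stable subalgebra, hence semisimple, so $H^{2g}_!$ is a semisimple module over it.
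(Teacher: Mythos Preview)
Your proposal is correct and follows precisely the route the paper takes: the paper's own proof is nothing more than a reference to section~4.3.5 of chapter~3 of \cite{Ha}, and what you have written is an accurate outline of Harder's argument there (interior cohomology as a Hecke-stable subquotient of $L^2$-cohomology, followed by the Borel--Garland/Borel--Casselman decomposition along the discrete spectrum). Your alternative via the polarization on the pure Hodge structure is also valid and is a pleasant shortcut, though it is not the argument the paper invokes.
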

\begin{proof}
Proof is given in section 4.3.5. of chapter 3 of \cite{Ha}.
\end{proof}

We denote by $R(\mathfrak{H})$ the image by Hodge realization of the Hecke algebra $\mathfrak{H}(G,K)$ in the algebra of endomorphisms of interior cohomology of $S$ in degree $2g$. 

\begin{coro}
The $Q$-algebra $R(\mathfrak{H})\otimes_E Q$ is semi-simple.
\end{coro}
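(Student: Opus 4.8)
The plan is to deduce the corollary from the semi-simplicity of interior cohomology as a Hecke module (the previous theorem) by an elementary argument on finite-dimensional algebras, the only real work being the bookkeeping between the coefficient fields $E$ and $Q$.

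First I would set $M:=H^{2g}_!(S(\mathbb{C}),\mu(V_\lambda))$, a finite-dimensional $Q$-vector space, which by the previous theorem is semi-simple as a module over $\mathcal{H}:=\mathfrak{H}(K,G(\mathbb{A}_f))\otimes_E Q$. By the proposition computing the realizations of the $e_\lambda$-part of the interior motive, $M[-2g-p]$ is $R_h(Gr_0M_{gm}(\mathcal{A}^r)^{e_\lambda})$, and the Hecke action on the interior motive is transported by $R_h$ to the given $\mathcal{H}$-action on $M$. Thus $R(\mathfrak{H})$ is the image of the $E$-algebra $\mathfrak{H}(K,G(\mathbb{A}_f))$ in $\End_Q(M)$, and I denote by $\overline{R}$ the $Q$-subalgebra of $\End_Q(M)$ generated by $R(\mathfrak{H})$; equivalently $\overline{R}$ is the image of $\mathcal{H}$ in $\End_Q(M)$.

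Next I would show $\overline{R}$ is semi-simple. It is finite-dimensional over $Q$, acts faithfully on $M$, and $M$ remains semi-simple over $\overline{R}$ because $\overline{R}$ is a quotient of $\mathcal{H}$, so the $\overline{R}$-submodules and $\mathcal{H}$-submodules of $M$ coincide. Writing $J$ for the Jacobson radical of $\overline{R}$: $J$ annihilates every simple $\overline{R}$-module, hence annihilates $M$, a finite direct sum of such, hence $J=0$ by faithfulness; a finite-dimensional algebra over a field with vanishing Jacobson radical is semi-simple. I would then descend to $R(\mathfrak{H})$: if $J'$ is its Jacobson radical, then $J'\cdot Q$ is a nilpotent two-sided ideal of the semi-simple algebra $\overline{R}$, so $J'\cdot Q=0$ and $J'=0$, i.e. $R(\mathfrak{H})$ is a semi-simple $E$-algebra. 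Finally, since the characteristic is zero, the base change of a finite-dimensional semi-simple algebra along the field extension $E\hookrightarrow Q$ is again semi-simple (separability), so $R(\mathfrak{H})\otimes_E Q$ is semi-simple.

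The step I expect to require the most care is the passage between coefficient rings: identifying $R(\mathfrak{H})$, a priori an $E$-algebra of endomorphisms of interior cohomology, with an $E$-subalgebra of $\End_Q(M)$ whose $Q$-span is the image of $\mathcal{H}$, and then the final use of characteristic zero to transfer semi-simplicity across $\otimes_E Q$. The heart of the argument — that the image of an algebra in the endomorphism ring of a finite-dimensional faithful semi-simple module is semi-simple — is purely formal.
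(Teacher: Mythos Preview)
Your argument is correct and follows the same idea as the paper: deduce semi-simplicity of the image algebra from semi-simplicity of the module $M=H^{2g}_!(S(\mathbb{C}),\mu(V_\lambda))$. The paper simply cites the relevant statement in Bourbaki (proposition~3 of \S5.1), whereas you spell it out via the Jacobson radical, which is exactly the standard proof of that proposition.

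One remark on the bookkeeping you flagged as delicate. The paper's notation $R(\mathfrak{H})\otimes_E Q$ is somewhat ambiguous; if, as the surrounding text suggests, $R(\mathfrak{H})$ is already meant to be the image of $\mathfrak{H}(K,G(\mathbb{A}_f))\otimes Q$ in $\End_Q(M)$, then it is a $Q$-algebra from the start and your argument terminates at the point where you show $\overline{R}$ is semi-simple --- the descent to $E$ and re-ascent via separability are then unnecessary. Your more cautious reading (treating $R(\mathfrak{H})$ as an $E$-subalgebra whose $Q$-span is $\overline{R}$) is not wrong, and the extra steps you give are valid, but they address an imprecision in the paper rather than a genuine mathematical obstacle. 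Either way the substance is the same one-line fact: a finite-dimensional algebra acting faithfully and semi-simply on a module is semi-simple.
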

\begin{proof}
This follows from the previous theorem and from proposition 3. of §5.1. at page 46 of \cite{B}.
\end{proof}

As a consequence, the isomorphism classes of simple right $R(\mathfrak{H})\otimes_E Q$-modules are in bijection with the isomorphism classes of minimal right ideals. These ideals are generated by idempotents. 

Let $Y_{\pi_f}$ be such an ideal and let $e_{\pi_f}\in R(\mathfrak{H}(K,G(\mathbb{A}_f))\otimes_E Q$ be the idempotent generating it.

\begin{de}
We define the Hodge structure associated to $Y_{\pi_f}$ as being 
$$W(\pi_f)=Hom_{R(\mathfrak{H}(K,G(\mathbb{A}_f))\otimes_E Q}(Y_{\pi_f},H^{2g}_!(S(\mathbb{C}),\mu(V_\lambda))\otimes_{\overline{E}} Q).$$
\end{de}

In other words, (see for example \cite{Wil14})
$$W(\pi_f)=H^{2g}_!(S(\mathbb{C}),\mu(V_\lambda))\otimes_{\overline{E}}Q.e_{\pi_f}.$$

A priori, the element $e_{\pi_f}\in R(\mathfrak{H})\otimes_EQ$ does not come from a Chow motive. This is why we need to consider $Gr_0(M_{gm}(A^r)^{e_\lambda})'$ the Grothendieck motive associated to the Chow motive $ Gr_0(M_{gm}(A^r)^{e_\lambda})$. 

The category of Grothendieck motives is the opposite category of the category of homological motives defined in chapter 4 of \cite{And}.

\begin{de}
We define the motive associated to $Y_{\pi_f}$ as being the image by $e_{\pi_f}$ of the Grothendieck motive $Gr_0(M_{gm}(A^r)^{e_\lambda})'$ associated to the $e_\lambda$-part of the interior motive of $A^r$. We write it $\mathcal{W}(\pi_f)$.
\end{de}

\begin{theo}
The realizations of $\mathcal{W}(\pi_f)$ are concentrated in cohomological degree $p+2g$ and are equal to $W(\pi_f)$ in Hodge realization. 
\end{theo}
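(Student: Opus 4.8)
The plan is to transport the identity $\text{Gr}_0(M_{gm}(A^r)^{e_\lambda}) = \text{Gr}_0(M_{gm}^c(A^r)^{e_\lambda})$, together with the description of its realizations from Proposition 3.9, through the idempotent $e_{\pi_f}$. First I would recall that by Definition 4.1 the Grothendieck motive $\mathcal{W}(\pi_f)$ is obtained by applying the idempotent $e_{\pi_f} \in R(\mathfrak{H})\otimes_E Q$ to $\text{Gr}_0(M_{gm}(A^r)^{e_\lambda})'$; since realization functors (both $R_h$ and $R_l$) are additive and $Q$-linear, they commute with the passage to a direct summand cut out by an idempotent. Hence
$$R_h(\mathcal{W}(\pi_f)) = R_h\bigl(\text{Gr}_0(M_{gm}(A^r)^{e_\lambda})'\bigr).e_{\pi_f}, \qquad R_l(\mathcal{W}(\pi_f)) = R_l\bigl(\text{Gr}_0(M_{gm}(A^r)^{e_\lambda})'\bigr).e_{\pi_f}.$$
The key point is that $e_{\pi_f}$ lies in $R(\mathfrak{H})$, i.e. it acts on the realizations precisely as an element of the Hecke algebra image, so applying it after realizing is the same as cutting out the corresponding Hecke summand.

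Next I would invoke Proposition 3.9, which gives $R_h(\text{Gr}_0(M_{gm}(A^r)^{e_\lambda})) = H^{2g}_!(S(\mathbb{C}),\mu(V_\lambda))[-2g-p]$ and the analogous $l$-adic statement; passing from the Chow motive to the associated Grothendieck motive does not change the realization (the homological equivalence is by construction compatible with $R_h$, $R_l$). Therefore the realizations are concentrated in cohomological degree $2g+p$, and in that degree applying $e_{\pi_f}$ to $H^{2g}_!(S(\mathbb{C}),\mu(V_\lambda))\otimes_{\overline E}Q$ yields exactly $W(\pi_f)$ by the formula $W(\pi_f) = H^{2g}_!(S(\mathbb{C}),\mu(V_\lambda))\otimes_{\overline E}Q.e_{\pi_f}$ displayed just before Definition 4.1. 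This settles the Hodge realization. For the $l$-adic side one gets in the same way $H^{2g}_!(S_{\overline E},\mu_l(V_\lambda))\otimes Q.e_{\pi_f}$, again concentrated in degree $p+2g$.

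The main obstacle is the compatibility of $e_{\pi_f}$ with the realization functors: a priori $e_{\pi_f}$ is produced inside $R(\mathfrak{H})\otimes_E Q$, the image of the Hecke algebra in the endomorphisms of $H^{2g}_!$, and one must check that the Hecke action on $\text{Gr}_0(M_{gm}(A^r)^{e_\lambda})'$ at the level of Grothendieck motives realizes to this action — i.e. that the endomorphism algebra of the interior motive surjects compatibly onto $R(\mathfrak{H})$. This is exactly the content built into Theorem 3.1(a) (the action of $\text{End}(M_{gm}(A^r)^{e_\lambda})$ on $\text{Gr}_0$) combined with the functoriality of the Hecke action on the triangle recalled at the start of Section 4 and the fact that $R_h$ identifies $R(\mathfrak{H})$ with a quotient of the Hecke algebra acting on interior cohomology; once this compatibility is in hand, the rest is the formal transport of idempotents described above. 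I would therefore devote the bulk of the argument to spelling out that the diagram relating $\text{End}(\text{Gr}_0(M_{gm}(A^r)^{e_\lambda})')$, the Hecke algebra $\mathfrak{H}(K,G(\mathbb{A}_f))\otimes_E Q$, and $R(\mathfrak{H})\otimes_E Q$ commutes with $R_h$ and $R_l$, and then conclude as indicated.
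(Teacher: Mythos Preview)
Your argument is correct and is precisely the unpacking of the paper's one-line proof (``This follows from the construction''): you compute the realization of $\mathrm{Gr}_0(M_{gm}(A^r)^{e_\lambda})$ via the proposition identifying it with $H^{2g}_!(\ldots)[-2g-p]$, then apply the idempotent $e_{\pi_f}$ and use the displayed formula $W(\pi_f)=H^{2g}_!(S(\mathbb{C}),\mu(V_\lambda))\otimes Q\cdot e_{\pi_f}$. A couple of numbering slips to fix when you compare with the paper: what you call ``Proposition~3.9'' is actually the second proposition of Section~4, and the definition of $\mathcal{W}(\pi_f)$ is the second definition there, not the first.
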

\begin{proof}
This follows from the construction.
\end{proof}

\bibliographystyle{plain}
\bibliography{bibart}
\nocite{*}
\end{document}